\newcommand{\Fc}{\mathcal{F}}
\DeclareMathOperator{\Psh}{PSH}
\DeclareMathOperator{\Dsh}{DSH}
\DeclareMathOperator{\psh}{Psh}
\theoremstyle{plain}
\newtheorem{theorem}{Theorem}[section]
\newtheorem{lemma}[theorem]{Lemma}
\newtheorem{proposition}[theorem]{Proposition}
\theoremstyle{definition}
\newtheorem{definition}[theorem]{Definition}
\theoremstyle{remark}
\newtheorem{remark}[theorem]{Remark}
\numberwithin{equation}{section}
\renewcommand{\P}{\mathbb P}
\newcommand{\C}{\mathbb C}
\newcommand{\lam}{\lambda}
\newcommand{\eps}{\epsilon}
\newcommand{\Cc}{\mathcal C}
\newcommand{\1}{\mathbb 1}
\newcommand{\D}{\mathbb D}
\newcommand{\N}{\mathbb N}
\newcommand{\G}{\mathbb G}
\DeclareMathOperator{\dist}{dist}
\newcommand{\ddc}{{dd^c}}
\begin{document}

\hyphenpenalty=10000

\title[]{H\"older continuity and laminarity of the Green currents
for H\'enon-like maps}

\begin{author}[F.~Bianchi]{Fabrizio Bianchi}
\address{Dipartimento di Matematica, Università di Pisa, Largo Bruno Pontecorvo 5, 56127 Pisa, Italy}
\email{fabrizio.bianchi@unipi.it}
\end{author}

\begin{author}[T.C.~Dinh]{Tien-Cuong Dinh}
\address{National University of Singapore, Lower Kent Ridge Road 10,
Singapore 119076, Singapore}
\email{matdtc$@$nus.edu.sg }
\end{author}

\begin{author}[K.~Rakhimov]{Karim Rakhimov}
\address{National University of Singapore, Lower Kent Ridge Road 10,
Singapore 119076, Singapore}
\curraddr{V.I. Romanovskiy  Institute of Mathematics of Uzbek Academy of Sciences,  Tashkent, Uzbekistan}
\email{karimjon1705@gmail.com}
\end{author}

\begin{abstract}
Under a natural assumption on the dynamical degrees,
we prove that the Green currents associated to
any
H\'enon-like map in any dimension
have H\"older continuous super-potentials, i.e.,
give H\"older continuous linear functionals
on suitable spaces of forms and currents.
As a consequence, the unique measure of maximal entropy is the Monge-Amp\`ere of a
H\"older continuous 
plurisubharmonic function
and has strictly positive Hausdorff dimension.
Under the same assumptions,
we also prove that the
Green
currents are woven. When they are of bidegree $(1,1)$, they are laminar.
In particular, our results generalize results  known until now only in algebraic settings, or in dimension 2. 
\end{abstract}

\maketitle

\setcounter{secnumdepth}{3}
\setcounter{tocdepth}{1}
\tableofcontents

\section{Introduction}

H\'enon maps are among the most studied dynamical systems.
They were introduced by Michel Hénon in the real setting as a simplified model
of the Poincaré section for the Lorenz model, see, e.g., \cite{BC91,Henon76}.
They are also actively
studied in the complex setting, where complex analysis offers additional powerful tools,
see the fundamental work of
Bedford-Lyubich-Smille and Fornaess-Sibony \cite{BLS93,BLS93b,BS91,BS92,FS92, Sibony99}. 
As an example, a unique measure of maximal entropy $\mu$ was introduced by Sibony, as the intersection $\mu = T^+\wedge T^-$ of two positive closed currents with H\"older continuous potentials.
These currents can be seen as the accumulation of the iterates of manifolds
under backward and forward iteration, respectively \cite{BS91}. The regularity of their potentials plays an important role in the quantification of the speed of
such convergences
\cite{Dinh05,DS14rigidity}.
These two currents are also \emph{laminar}, namely they can be  nicely approximated by 
the integration on (pieces) of complex curves which do not intersect
and the measure
$\mu$
has a local
product structure
\cite{BLS93,Dujardin04}.
Such measure 
enjoys remarkable statistical properties, see for instance \cite{BLS93,BLS93b,BD24,Dinh05,Wu22henon}.

\medskip

A natural generalization of H\'enon maps in any dimension is given by the so-called \emph{Hénon-Sibony maps}.
These are polynomial
automorphisms $f$
of $\mathbb C^k$ such that the indeterminacy sets of 
the extensions to $\mathbb P^k$
of $f$ and $f^{-1}$ are non-empty and disjoint \cite{Sibony99}. Although their study is highly more technical, due to the need to work with
cycles and currents of intermediate dimensions,
the main properties of 
H\'enon maps mentioned above have been successfully generalized to this setting.
In particular, the unique measure of maximal entropy
can still be obtained as the intersection of two
Green currents $T^+$ and $T^-$ 
of complementary bidegree.
These currents have H\"older continuous \emph{super-potentials}
 \cite{DS09super,Sibony99}. 
Roughly speaking,
they can be seen as H\"older continuous maps on  a
suitable
space of forms of complementary degree.
Moreover, these currents are 
\emph{woven} \cite{Dinh05suites,DS16}.
This is a slightly weaker notion than the
laminarity,
where the approximating analytic sets can a priori intersect, see
\cite{Dinh05suites} and Definition \ref{d:woven}.
These properties of the Green currents were also investigated in the setting of automorphisms of K{\"a}hler manifolds \cite{BD23kahler, Cantat01, dThD, DS05Green,  Wu21kahler}
and
for generic birational self-maps
of $\mathbb P^k$
\cite{BD05energy,dThV10,dThV24,Dujardin06,Vigny15}.

\medskip

It is natural to ask how much of the above stays true when leaving the algebraic setting (for instance, when considering small perturbations of H\'enon-Sibony maps, which destroy the algebraic structure, but keep the global geometric properties).
This question was first addressed by
Dujardin \cite{Dujardin04},
who
systematically
studied the class
of so-called \emph{H\'enon-like maps} in two dimensions,  introduced by Hubbard and Oberste-Vorth in
\cite{HOV95}.
He proved that such maps still enjoy many of the properties
of H\'enon maps, starting from 
a unique measure of maximal entropy with a local product structure.

\medskip

The goal of this work is to close this circle by proving that, under a natural  and necessary
assumption on the  dynamical degrees of the map (which is automatically satisfied in the above algebraic setting and in dimension 2), every H\'enon-like
map in any dimension
has the same
strong regularity and 
laminarity properties
of their algebraic counterparts. 

\medskip

Let us now be more precise and state our main result. 
Given integers $1\leq p < k$ and
open bounded 
convex domains $M\Subset \mathbb C^p$ and $N\Subset \mathbb C^{k-p}$,
a \emph{H\'enon-like map}
is an invertible proper holomorphic map from a vertical subset to a horizontal subset of $M\times N$ which geometrically 
(but non-uniformly)
expands in $p$ directions and contracts in $k-p$
directions, see
\cite{DNS,DS1} and Definition \ref{d:hlm} below
for the
precise definition.
They should be thought of as the building blocks of more complicated systems.
They form an infinitely dimensional family, which contains maps which are not conjugated to Hénon-Sibony maps.
These systems admit a measure of maximal entropy $\mu$, which is the intersection 
$\mu= T^+\wedge T^-$
of a vertical Green current $T^+$ and of a horizontal Green current $T^-$,
 of bidimension 
 $(k-p,k-p)$ and
$(p,p)$ respectively.

\medskip

A natural assumption when studying higher dimensional dynamical systems is that
there exists an integer $q$ such that the \emph{dynamical degree} of order $q$ (describing the rate of growth of the volume of manifolds -- or more generally of the mass of positive closed currents -- of dimension $q$ by iteration)
strictly dominates all the other dynamical degrees.
 In our setting, precise definitions are given in \cite{BDR,DNS} and
Definition \ref{d:degrees-currents}.
 Because of the geometric properties of Hénon-like maps, our
main dynamical degree is that of order $p$, and can be seen as the rate of growth of the volume for the forward iteration of \emph{horizontal} $p$-dimensional manifolds, or equivalently for the backward iteration of \emph{vertical}
$(k-p)$-dimensional manifolds. We denote by $d\, (=d^+_p=d_{k-p}^-)$ this degree.
We then denote, for $0\leq s\leq p-1$,
by $d^{+}_{s}$ the rate of growth of  the volume for the
forward iteration of
horizontal $s$-dimensional manifolds, and 
by
$d^{-}_{s}$, for $0\leq s \leq k-p-1$,
the rate of growth  of the
volume of the backward  iteration of
vertical $s$-dimensional manifolds.
We proved in \cite{BDR} that the sequences
$\{d^+_{s}\}_{0\leq s \leq p}$ and
$\{d^-_{s}\}_{0\leq s \leq k-p}$ are monotone.
In particular,
as soon as we have
$d> \max (d_{p-1}^+, d_{k-p-1}^-)$, the main degree
$d$ strictly dominates all dynamical degrees.
The main result of this paper can be stated as follows.

\begin{theorem}\label{t:intro-all}
Let $f$ be a H\'enon-like map in any dimension
 as above
and assume that the main dynamical degree $d=d_p^+= d_{k-p}^-$
satisfies $d> \max (d_{p-1}^+, d_{k-p-1}^-)$.
Then
\begin{enumerate}
\item[{\rm (i)}]  the Green currents $T^\pm$ have H\"older continuous super-potentials;
\item[{\rm (ii)}]  the Green currents $T^\pm$ are woven;
\item[{\rm (iii)}]  $\mu=T^+\wedge T^-$ is a Monge-Amp\`ere measure with H\"older continuous potential and it has a positive Hausdorff dimension.
\end{enumerate}
If, furthermore, we have 
 $p=k-1$ (resp. $p=1$), 
then $T^-$ (resp. $T^+$) is laminar.
\end{theorem}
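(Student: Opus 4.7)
The plan is to derive all four assertions from a single quantitative input: an exponentially fast convergence
$d^{-n}(f^n)^*\Omega^+\to T^+$, and dually $d^{-n}(f^n)_*\Omega^-\to T^-$, for suitable horizontal and vertical reference forms. The existence of these convergences was already established in \cite{DNS}; the role of the hypothesis $d>\max(d_{p-1}^+,d_{k-p-1}^-)$ is to promote them to a rate essentially $(d_{p-1}^+/d)^n$, respectively $(d_{k-p-1}^-/d)^n$, in a controlled Sobolev-type norm, using the intermediate mass estimates from \cite{BDR}. Assertion (i) is then extracted from this speed via the classical interpolation scheme for super-potentials of \cite{DS09super}: given two horizontal test currents $R,R'$ at small distance $\delta$, one regularises both at scale $\theta$, uses the uniform bound of the super-potential on the smooth regularised pair, compares $R,R'$ with their regularisations using the exponential speed, and then optimises $\theta$ against the number of iterations $n$ and against $\delta$. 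This is the technically heaviest step, and the main obstacle of the proof: without the algebraic framework of $\P^k$, the speed estimate has to be made uniform in arbitrary sufficiently regular horizontal test currents and not only in model forms, which is precisely the information the strict dominance of $d$ is designed to provide.

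\textbf{Parts (ii) and (iii).} For the wovenness in (ii), I would approximate $d^{-n}(f^n)^*\Omega^+$ by currents of integration $d^{-n}(f^n)^*[V]$ along a generic horizontal analytic subset $V$ of the relevant dimension. The preimages $f^{-n}(V)$ are vertical analytic sets whose components inside a fixed pair of bidisks have uniformly bounded (normalised) area by the degree estimates of \cite{BDR}; the strict degree gap forces the mass contributed by components that leave the bidisks, or that self-intersect with earlier generations, to vanish in the limit. The current $T^+$ then appears as an integral of graph-pieces over a measured family, which is exactly the structure of Definition~\ref{d:woven}; the argument for $T^-$ is symmetric. For (iii), the H\"older super-potential property of $T^\pm$ translates, on any small bidisk, into H\"older continuous local plurisubharmonic potentials $u^\pm$ whose iterated $\ddc$'s reproduce $T^\pm$; thus $\mu=T^+\wedge T^-$ equals the Monge--Amp\`ere of a single H\"older continuous psh function built from $u^+$ and $u^-$ with suitable reference K\"ahler weights. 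The strictly positive Hausdorff dimension of $\mu$ then follows from the classical fact that a Monge--Amp\`ere of a $C^\alpha$ psh function cannot charge sets of Hausdorff dimension below a universal multiple of $\alpha$.

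\textbf{Laminarity in bidegree $(1,1)$.} Assume $p=k-1$, so that $T^-$ has bidegree $(1,1)$; the case $p=1$ is symmetric. The approximating pieces from (ii) are then holomorphic hypersurfaces, and one needs to upgrade ``the pieces may a priori intersect'' (wovenness) into ``the pieces are essentially pairwise disjoint'' (laminarity). The key estimate is a B\'ezout-type bound on the self-intersection mass of the approximating finite sums of hypersurfaces; the strict dominance $d>d_{k-p-1}^-$ is exactly what makes this self-intersection mass subexponentially small compared with the total mass, hence asymptotically negligible. One then concludes by the de Th\'elin--Dujardin criterion for laminarity of positive closed $(1,1)$-currents built as limits of integration over hypersurfaces with vanishing normalised self-intersection. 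In summary, once the sharp exponential speed underlying (i) is in place, parts (ii), (iii) and the laminarity assertion follow by relatively standard refinements of tools already developed in the algebraic or two-dimensional settings.
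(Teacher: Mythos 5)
Your outline of parts (i) and (iii) is broadly consistent with the paper's strategy (a quantitative speed estimate balanced against a regularisation scale for (i); a general criterion reducing (iii) to (i)), although the paper implements (i) not by regularising the test currents but by pushing forward $dd^c\Phi_i$, solving the $dd^c$ equation with mass control (Theorem \ref{l:ddcpsi}), and telescoping, and it obtains (iii) via the Dinh--Nguyen characterisation of Monge--Amp\`ere measures by testing against psh functions rather than by producing ``local potentials $u^\pm$'' of $T^\pm$, which do not exist in the usual sense for bidegree $(p,p)$ with $p>1$.

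The genuine gap is in part (ii). De Th\'elin's criterion (Theorem \ref{th:dTh}) requires controlling the volume of the \emph{lifts} $\hat\Sigma_n\subset D\times\mathbb{G}(q,k)$ of the approximating analytic sets, i.e.\ the volume counted together with the tangent directions, and showing $\mathrm{volume}(\hat\Sigma_n)=O(\mathrm{volume}(\Sigma_n))$. Your proposal never engages with this: the mechanisms you invoke (mass of components leaving the bidisks, or self-intersecting with earlier generations, tending to zero) concern only the projections $\Sigma_n$ in $D$ and are not what the criterion asks for. In the algebraic setting the lifted volume is bounded cohomologically, but in the present local, non-algebraic setting there is no Hodge theory available, and this is precisely the main difficulty of the paper. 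It is resolved by introducing the $D$-dimension and the \emph{shadow} of a positive current on $D\times\mathbb{G}(q,k)$ (Section \ref{ss:shadows}), together with a combinatorial extraction lemma: if the lifted volumes grew faster than $d^n$, the excess would produce a sequence of closed horizontal currents of dimension $l<p$ whose pushforwards grow like $d^n$, contradicting $d_l^+\le d_{p-1}^+<d$ (Proposition \ref{p:In=Odn}). Without an argument of this type your wovenness claim does not go through. Similarly, for the laminarity step your ``B\'ezout-type bound'' on self-intersections is not available here; the paper instead verifies the hypothesis $T^-\wedge T^-=0$ of Proposition \ref{p:crit-lam} directly from the invariance $f_*(T^-\wedge T^-)=d^2(T^-\wedge T^-)$, which would force $d^2\le d^+_{k-2}<d$ and hence $d<1$, a contradiction.
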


It was proved in \cite{DNS} that, under
the same assumptions of Theorem \ref{t:intro-all},
the Green currents have continuous super-potentials.
The first assertion of Theorem \ref{t:intro-all} 
is an improvement of that result. The proof is based
on
quantitative estimates for the action of $f_*$ and $f_*$
with respect to suitable norms, involving a precise control for the solution of the $dd^c$ equation for horizontal and vertical currents in $D$.
The third assertion is a consequence of the first and of a more general criterion to ensure that property, see Proposition \ref{p:MA-gen}. After some preliminary material presented in Section \ref{s:holder-general}, these two assertions are proved in Section \ref{s:holder}.

\medskip

The 
 most difficult
part of Theorem \ref{t:intro-all} 
consists in the laminarity statements for the Green currents.
A natural way to prove laminarity properties for $T^+$
is to exploit the convergence $d^{-n}[{f}^{-n} \Sigma]\to T^+$, where $\Sigma$
is a vertical analytic set of dimension $k-p$. By a result
of de Thélin \cite{dTh},
in order to show that $T^+$ is woven, one can verify that
\begin{equation}\label{eq:intro-condition}
\mathrm{volume} (\hat \Sigma_n)= O(\mathrm{volume} (\Sigma_n))
\quad \mbox{ as } 
 n\to \infty,
\end{equation}
where
$\Sigma_n :=[{f}^{-n} \Sigma]$,
$\hat \Sigma_n$
is a natural
lift of $\Sigma_n$ to the space $\D\times \G$,
and $\G$
is a suitable
Grassmannian.
In the algebraic settings above, this is achieved by means of 
cohomology arguments.
On the other hand,
estimating directly
the volume of $\hat \Sigma_n$
by means of the volume of $\Sigma_n$ is not possible in our 
non-algebraic and local 
setting, because of the lack of a meaningful Hodge theory.
In order to overcome this issue, we introduce 
the notion of \emph{shadow} for a non-necessarily closed current on $D\times \G$,
corresponding to
a geometrically ``correct dimensional" 
projection on $D$,
taking into account the possible defect of dimension (which cannot be detected by cohomology), see Section \ref{ss:shadows}. We then show 
that, if
\eqref{eq:intro-condition} does not hold for some $\Sigma$ as above, 
the extra volume of $\hat \Sigma_n$
must come from some part of $\hat \Sigma_n$
whose shadow is of smaller dimension (since the part 
whose shadow has full dimension is controlled by the dynamical degree $d$).
Using this, we 
can construct a sequence $S_n$ of horizontal positive closed currents on $D$ (related to the shadows of $\hat \Sigma_n$)
of dimension $l<p$ 
and satisfying $\|f^n_* (S_n)\|\gtrsim d^n\|S_n\|$. This leads to a contradiction with the assumption $d_l^+ \leq d_p^+ < d$, and completes the proof.

\medskip

\subsection*{Acknowledgements}

The authors would like to thank the National University of Singapore,
 the University of Lille, and the University of Pisa
 for the warm welcome and the excellent work conditions.

This project has received funding from 
the Programme
Investissement d’Avenir
(LabEx CEMPI /ANR-11-LABX-
0007-01, ANR QuaSiDy /ANR-21-CE40-0016, ANR PADAWAN /ANR-21-CE40-0012-01),
from the 
Ministry of Education, Singapore, through the grant
MOE-T2EP20120-0010, 
from
the MIUR Excellence Department Project awarded to the Department of Mathematics, University of Pisa, CUP I57G22000700001, and from 
the   government of Uzbekistan
 through the grant
 IL-5421101746.
The first
author is affiliated to the GNSAGA group of INdAM.

\section{Horizontal-like maps and dynamical degrees}

In this section, we recall the definition of a Hénon-like map and its dynamical degrees, 
as well as the main properties of these maps that we will need in the sequel.

\medskip

Let $p$ and $k$ 
be integers with
$1\le p\le k-1$.
Let $M\Subset\mathbb{C}^p$ and $ N\Subset\mathbb{C}^{k-p}$ be open bounded
convex sets.
Denote by $\pi_M$ and $\pi_N$ the natural projections of $D:=M\times N$ to $M$ and $N$, respectively. A subset $E\subset D$ is  \textit{horizontal} (resp. \textit{vertical}) if $\pi_{N}(E)\Subset N$ (resp. $\pi_{M}(E)\Subset M$).  
The horizontal (resp. vertical) boundary of $D$ is the set
$\partial_h D:= \overline{M}\times \partial N$ (resp. $\partial_v D:=\partial M\times \overline{N}$).  
We also denote by $\pi_1$ and $\pi_2$ the natural projections of $D\times D$ to its first and second factors, respectively. In all this paper, the symbols $\lesssim$ and $\gtrsim$ denote inequalities that hold up to an implicit
multiplicative constant (often depending on the domains under consideration).

\begin{definition}\label{d:hlm}
    A \textit{horizontal-like map} $f$ of $D$ is a holomorphic map whose graph $\Gamma\subset D\times D$ is a (not necessarily connected) submanifold of $D\times D$ with pure dimension $k$  and such that
    \begin{enumerate}
             \item $\pi_1|_\Gamma$ is injective and $\pi_2|_\Gamma$ has finite fibers; and
        \item $\overline \Gamma$ does not intersect ${\partial_v D} \times \overline D$ and
    $\overline D \times  {\partial_h D}$.
       \end{enumerate}
        We say that $f$ is an \textit{invertible horizontal-like map}, or a \textit{H\'enon-like map}, if $\pi_2|_\Gamma$ is also injective.
\end{definition}

Observe
that $f$ is not 
defined on the whole 
$D$, but only on the vertical open subset $\pi_1 (\Gamma)$ of $D$. Similarly, the image of $f$ is the horizontal subset $\pi_2 (\Gamma)$ of $D$. We will write $D_{v,1}:= \pi_1 (\Gamma)$ and $D_{h,1}:= \pi_2 (\Gamma)$ in the following.
More generally, for all $n\geq 1$ we consider the iterate $f^n = f\circ \dots \circ f$ ($n$ times), which is
also a horizontal-like map.
We denote by $D_{v,n}$ and $D_{h,n}$
the domain and the image of $f^n$, respectively. Observe that the sequences $(D_{v,n})_{n\geq 1}$
and $(D_{h,n})_{n\geq 1}$ are decreasing.
 
\medskip

We fix in what follows a horizontal-like map $f$ of $D=M\times N$. We also fix
two 
convex open subsets $M'\Subset M$ and $N'\Subset N$. For simplicity,
we will always assume that $M'$ and $N'$ are
sufficiently close to
$M$ and $N$, respectively,
to always satisfy
\begin{equation}\label{e:hp-inclusion-1}
f^{-1}(D) \subset M' \times N
\quad \mbox{ and } \quad
f(D) \subset M \times N'.
\end{equation}

A \textit{horizontal} (resp. \textit{vertical}) current $S$ on {$M'\times N'$} 
is a current with horizontal (resp. {vertical}) support in $M'\times N'$.

\begin{definition}
\label{d:degrees-currents}
For every
$0\leq l\leq p$, the dynamical degree $d_l^+$ is defined by
$$d_l^+:=\limsup_{n\to\infty} \left(d^+_{l,n}\right)^{1/n}, \quad
\mbox{ where }
\quad
d^+_{l,n} :=\sup_{S} \|(f^n)_*(S)\|_{M'\times N}$$
and $S$ runs over the set 
of all horizontal positive closed currents of bidimension $(l,l)$ of mass 1 on $M'\times N'$.

Similarly,
for every $ 0\leq l\leq k-p$,
the dynamical degree $d_l^-$ 
is defined by
$$d_l^-:=\limsup_{n\to\infty} \left(d^-_{l,n}\right)^{1/n}, 
\quad
\mbox{ where }
\quad
d^-_{l,n} :=\sup_{R} \|(f^n)^*(R)\|_{M\times N'}$$
and $R$ runs over the set of all vertical positive closed currents of bi-dimension $(l,l)$ of mass 1 on $M'\times N'$.
\end{definition}
The dynamical degrees $d^+_l$ and $d^-_l$ are independent of the choice of $M'$ and $N'$
\cite{DNS}.
By
\cite[Proposition 4.2]{DS1},
both $d^+_p$ and $d^-_{k-p}$ are integers and we have $d^+_p=d^-_{k-p}=:d$.  
The integer $d$ is called \emph{the main dynamical degree} of $f$.
Moreover,  we have 
\begin{equation}\label{eq:dpn=d}
    d^n \lesssim d^+_{p,n} \lesssim d^n 
\quad 
\mbox{ and }
\quad 
d^n \lesssim
d^{-}_{{ k-p},n} \lesssim d^n \quad \mbox{ as }n\to\infty.
\end{equation}

When $f$ is invertible,
we have 
$d^+_l (f) = d^-_l (f^{-1})$ 
for all $0 \leq  l \leq p$ 
and $d^-_l (f) = d^+_l (f^{-1})$
for all $0 \leq l \leq  k - p$, 
where the degrees of $f^{-1}$
can be defined reversing the role of $f_*$ and $f^*$
and using the fact that $f^{-1}$ 
is a vertical-like map with $k - p$
expanding directions, see also \cite[Section 3]{DNS}.

\medskip

Every Hénon-like map admits
a
canonical horizontal
current $T^-$, 
of bidimension $(p,p)$, and a canonical
vertical
 current
 $T^+$,  of bidimension 
 $(k-p,k-p)$. These currents are given and characterized by the following result.
Recall that, by \cite[Theorem 2.1]{DS1}
(see also \cite[Proposition 2.7]{Dujardin04}),
for any horizontal current
$S$ of bidimension $(p,p)$ on $D$,
the \emph{slice measure}
 $\langle S,\pi_{M'}, z\rangle$
is well-defined for any $z\in M'$ and its mass, denoted by $\|S\|_h$, is independent of $z$
and is equal to
$\langle S, \pi_{M}^* (\Omega_M)\rangle$
for every smooth probability measure $\Omega_M$
with compact support in $M'$.
Similarly, 
for every vertical current $R$
of bidimension $(k-p,k-p)$ on $D$,
the 
slice measure
$\langle R,\pi_{N'}, w\rangle$
is well-defined for any $w\in N'$ and its mass, denoted by $\|R\|_v$ is independent of $w$ and equal to $\langle R, \pi_N^*(\Omega'_N)\rangle$
for every smooth probability measure $\Omega'_N$
with compact support in $N'$.

\begin{theorem}[\cite{DS1}] 
\label{t:greencurrents}
    Let $f$  be a
    Hénon-like
 map on $D$. Let $\Omega$ (resp. $\Theta$) be a
    closed
    vertical
    current of bidimension $(k-p,k-p)$ (resp. horizontal
    current of bidimension $(p,p)$)
    of slice mass  $1$
    in $D$. Then
    \[d^{-n}(f^n)^*\Omega\to T^+
    \quad \mbox { and } 
    \quad
    d^{-n}(f^n)_*\Theta\to T^-
    \quad \mbox{ as } n\to \infty,
    \] where
       $T^+$ (resp. $T^-$)
    is a positive closed 
    vertical 
    current of bidimension $(k-p,k-p)$ 
    (resp. horizontal
    current of bidimension $(p,p)$)
    of slice mass $1$. 
    
    The currents $T^+$ and $T^-$ are    independent of $\Omega$ and  $\Theta$.
    Moreover, we have
    \[d^{-2n}(f^n)^*\Omega\wedge(f^n)_*\Theta\to \mu:=T^+\wedge T^-.\]
\end{theorem}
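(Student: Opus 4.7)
The plan is to prove existence and uniqueness of $T^\pm$ separately, and then deduce the convergence of the wedge product.

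\textbf{Step 1 (existence).} If $\Omega$ is a closed vertical current of bidimension $(k-p,k-p)$ and slice mass one in $D$, then $(f^n)^*\Omega$ is again a closed vertical current of the same bidimension, supported in the decreasing vertical set $D_{v,n}$. Using the slice formula and the integrality of the main degree (\cite[Proposition 4.2]{DS1}, compatible with \eqref{eq:dpn=d}) one obtains $\|(f^n)^*\Omega\|_v = d^n$, so the normalisation $d^{-n}(f^n)^*\Omega$ has slice mass one. The family $\{d^{-n}(f^n)^*\Omega\}_n$ is therefore precompact among positive closed vertical currents of slice mass one with uniformly vertical support, and every accumulation point $T^+$ is automatically positive, closed, vertical, of the prescribed bidimension and of slice mass one. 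The same argument applied to $f_*$ on horizontal currents (equivalently, to $(f^{-1})^*$, noting that $f^{-1}$ is vertical-like with the same main degree $d$) produces accumulation points $T^-$.

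\textbf{Step 2 (uniqueness and convergence).} To upgrade accumulation to convergence and independence of $\Omega$, I would rely on a $dd^c$-lemma adapted to closed vertical currents on the bidisk-type domain $D$: for any two such $\Omega_1,\Omega_2$ of slice mass one, one can write $\Omega_1 - \Omega_2 = dd^c U$ for a current $U$ with vertical support admitting a uniformly bounded potential on horizontal slices. Since $(f^n)^*$ commutes with $dd^c$,
\[ d^{-n}(f^n)^*(\Omega_1 - \Omega_2) = dd^c\bigl(d^{-n}(f^n)^* U\bigr). \]
The central estimate is that $\|(f^n)^* U\| = o(d^n)$; this is obtained from the slice-wise boundedness of the potential of $U$ together with the pull-back formula. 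Weak convergence of the right-hand side to zero then forces every accumulation point to coincide, so $T^+$ is unique, independent of $\Omega$, and the full sequence converges to it. The symmetric argument yields the analogous statement for $T^-$.

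\textbf{Step 3 (the measure $\mu$).} Since $T^+$ is vertical and $T^-$ is horizontal with matching slice masses, their supports meet in a compact subset of $D$ and the wedge product $T^+\wedge T^-$ is well-defined as a positive measure, either by slicing or via Bedford-Taylor theory applied to local bounded potentials on complementary slices. The convergence $d^{-2n}(f^n)^*\Omega\wedge(f^n)_*\Theta\to\mu$ then follows by writing each normalised factor locally as $dd^c$ of a uniformly bounded plurisubharmonic function on complementary slices, and invoking the classical continuity of the Monge-Amp\`ere operator along such sequences.

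\textbf{Main obstacle.} The most delicate ingredient will be the $dd^c$-lemma of Step 2, together with the quantitative control on the potential of $U$: in this local, non-compact, non-K\"ahler setting there is no Hodge theory to appeal to, and $U$ must be built by hand using the product structure of $D$, with uniform estimates tailored to survive iteration by $(f^n)^*$ and to be strictly beaten by the normalising factor $d^n$.
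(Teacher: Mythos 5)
First, a structural point: the paper gives no proof of Theorem \ref{t:greencurrents}; it is quoted as a black box from \cite{DS1}, so there is no internal argument to compare yours against, and your proposal must stand on its own. Step 1 is fine: the slice mass of $(f^n)^*\Omega$ is exactly $d^n$, and vertical positive closed currents with bounded slice mass and uniformly vertical support form a compact family. The genuine gap is in Step 2, precisely at the point you label the ``main obstacle''. The estimate $\|(f^n)^*U\|=o(d^n)$ for a solution of $dd^cU=\Omega_1-\Omega_2$ is not a technical ingredient to be supplied later; it is essentially the whole theorem, and it does not follow from ``slice-wise boundedness of the potential together with the pull-back formula''. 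The current $U$ has bidimension $(k-p+1,k-p+1)$ and is not closed (a nonzero positive closed vertical current of that bidimension cannot exist in $M\times N$), so no dynamical degree controls the mass of its pull-backs, and the crude bound through the Lipschitz constant of $f^n$ grows at a rate unrelated to $d$. Note also that Theorem \ref{t:greencurrents} carries no hypothesis on the dynamical degrees, so the degree-gap mechanisms that do control such $dd^c$-solutions elsewhere in the paper (the telescoping argument of Section \ref{s:holder}, which works only because $d^+_{p-1}<d$, via Theorem \ref{l:ddcpsi}) are not available here. The proof in \cite{DS1} avoids this difficulty by an entirely different device --- the ``geometry of currents'': deformations of vertical positive closed currents along structural discs, plurisubharmonicity of suitable functionals on these discs, and a compactness/maximum-principle argument --- and your sketch contains no substitute for it.

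Step 3 is likewise too optimistic for general $p$. Bedford--Taylor theory and the classical continuity of the Monge--Amp\`ere operator under decreasing or uniformly bounded psh potentials concern $(1,1)$-currents, whereas $T^+$ and $T^-$ have bidegrees $(p,p)$ and $(k-p,k-p)$; the wedge product must be defined through the intersection theory of \cite{DS1} (or through super-potentials, as in Section \ref{s:holder-general}), and the convergence $d^{-2n}(f^n)^*\Omega\wedge(f^n)_*\Theta\to\mu$ requires the continuity of that intersection along the two converging sequences, which again rests on the structural-disc machinery rather than on classical pluripotential theory. In short: Step 1 is correct, but Steps 2 and 3 replace the two hard points of \cite{DS1} with assertions, and the route you propose for Step 2 (a mass estimate on pull-backs of a non-closed current) would not go through as stated.
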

We call $T^+$, $T^-$, and $\mu$ the
Green current of $f$, the Green current of $f^{-1}$, and the Green measure of $f$, respectively.

\section{Horizontal and vertical currents with H\"older continuous super-potentials}
\label{s:holder-general}

We fix in this section two integers $p$ and $k$ with $1\leq p \leq k-1$ and
bounded open convex 
domains $M''\Subset M'\Subset M\Subset \mathbb C^p$ and
$N''\Subset N'\Subset N \Subset \mathbb  C^{k-p}$.

\subsection{Super-potentials of positive closed vertical currents}
For any $0<l< \infty$,
we denote by $\|\cdot\|_{\mathcal C^l}$
the standard $\mathcal C^l$ norm on the space of differential forms of a given degree.
For every $0\leq j\leq k$,
we define 
a semi-norm $\mathcal C^{-l}$
and a semi-distance $\dist_{-l}$ on 
the space of horizontal
(resp. vertical)
$(j,j)$-currents
on $M'\times N'$
by
\[\|\Phi\|_{-l}:=\sup|\langle \Phi,\Omega\rangle|
\quad
\mbox{ and }
\quad 
\dist_{-l} (\Phi_1, \Phi_2):= \|\Phi_1 - \Phi_2\|_{-l},
\]
where the supremum is taken over  
all $(k-j,k-j)$-forms
$\Omega$ with vertical
support in $M''\times N$
(resp. horizontal
support in $M\times N''$)
and such that $\|\Omega\|_{\Cc^l}\leq 1$. 
Observe that the semi-norm $\|\cdot\|_{-l}$, 
and  as a consequence the semi-distance $\dist_{-l}$,
is well-defined for currents of order up to $l$.
Moreover, by definition, we have $\|\Phi\|_{-l}\leq \|\Phi\|_{-l'}$ for every $\Phi$ as above
and $0<l'<l<\infty$.

\medskip

Let now $\Psh_h(M'\times N')$
be the set of horizontal 
currents $\Phi$ on $M'\times N'$
of bidimension $(p,p)$ 
such that $\ddc \Phi\geq 0$. 
We denote by $\Dsh_h(M'\times N')$ 
the real space spanned 
by $\Psh_h(M'\times N')$, i.e., the set
\[
\Dsh_h(M'\times N') := \{\Phi_1-\Phi_2 \colon \Phi_1, \Phi_2 \in \Psh_h(M'\times N')\}.
\]
For every $\Phi \in \Dsh_h(M'\times N')$
we also set
\[\|\Phi\|_*
:=
\inf \{\|dd^c \Phi_1\| + \|dd^c \Phi_2\| \colon \Phi_1, \Phi_2 \in  \Psh_h(M'\times N') \mbox{ such that } \Phi=\Phi_1-\Phi_2\}.\]

\begin{remark}\label{r:def-vertical}
In a similar way, we can also consider the set $\Psh_v(M'\times N')$
of vertical 
currents $\Phi$ on $M'\times N'$
of bidimension $(k-p,k-p)$ 
such that $\ddc \Phi\geq 0$, and the real space  
$\Dsh_v(M'\times N')$ 
spanned 
by $\Psh_v(M'\times N')$. 
As the statements are completely analogous, we just consider $\Psh_h (M'\times N')$
and
$\Dsh_h (M'\times N')$ in the rest of this section.
\end{remark}

\begin{lemma}\label{l:l+2}
The following assertions hold for every $\Phi \in \Psh_h (M'\times N')$ and every $0<l<\infty$.
\begin{itemize}
\item[{\rm (i)}] 
There exists a positive constant $C$ independent of $\Phi$ and $l$
such that 
$\|\ddc\Phi\|_{-l-2}\leq C \|\Phi\|_{-l}$.
\item[{\rm (ii)}] 
For every compact subset $K\Subset M''\times N$ there exists a positive constant $C_{K,l}$
independent of $\Phi$
such that $\|\ddc \Phi\|_K\leq C_{K,l} \|\Phi\|_{-l}$.
\end{itemize}
\end{lemma}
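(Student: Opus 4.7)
The plan is direct: (i) is the dual statement, via integration by parts, of the elementary two-derivative estimate $\|\ddc\Omega\|_{\mathcal C^l}\lesssim\|\Omega\|_{\mathcal C^{l+2}}$, and (ii) then follows from (i) by pairing the positive current $\ddc\Phi$ against a smooth cutoff of the standard K\"ahler form. The only subtle point is that admissible test forms for $\|\cdot\|_{-l}$ are only required to have vertical support in $M''\times N$, not compact support in $D$, so the integration by parts has to be justified using the horizontal/vertical geometry of the supports.

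For (i), I would fix a smooth $(p-1,p-1)$-form $\Omega$ with vertical support in $M''\times N$ and $\|\Omega\|_{\mathcal C^{l+2}}\leq 1$, so that $\pi_M(\supp\Omega)\Subset M''$. Since $\Phi$ is horizontal in $M'\times N'$, one has $\pi_N(\supp\Phi)\Subset N'$, hence $\supp\Phi\cap\supp\Omega$ is a compact subset of $D$. Choose a smooth cutoff $\chi$ with compact support in $D$ that equals $1$ in a neighborhood of this intersection. A direct pointwise check (separating $x\in\supp\Phi$ into the cases $x\in\supp\Omega$, where $\chi=1$ near $x$, and $x\notin\supp\Omega$, where $\Omega=0$ near $x$) shows that $(1-\chi)\Omega$, and therefore also $\ddc((1-\chi)\Omega)=\ddc\Omega-\ddc(\chi\Omega)$, vanishes on an open neighborhood of $\supp\Phi$. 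Consequently
$$\langle\ddc\Phi,\Omega\rangle=\langle\ddc\Phi,\chi\Omega\rangle=\langle\Phi,\ddc(\chi\Omega)\rangle=\langle\Phi,\ddc\Omega\rangle,$$
the middle step being the standard integration by parts applied to the compactly supported smooth form $\chi\Omega$. Since $\ddc\Omega$ is smooth of bidegree $(p,p)$, has the same vertical support in $M''\times N$ as $\Omega$, and satisfies $\|\ddc\Omega\|_{\mathcal C^l}\leq C\|\Omega\|_{\mathcal C^{l+2}}$ with $C$ depending only on $k$ (in particular independent of $l$), the form $C^{-1}\ddc\Omega$ is admissible for $\|\Phi\|_{-l}$, which yields $|\langle\ddc\Phi,\Omega\rangle|\leq C\|\Phi\|_{-l}$. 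Taking the supremum over $\Omega$ gives (i).

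For (ii), let $\omega:=\ddc\|z\|^2$ be the standard K\"ahler form on $\mathbb C^k$ and, given $K\Subset M''\times N$, pick $\chi\in\mathcal C^\infty_c(M''\times N)$ with $0\leq\chi\leq 1$ and $\chi\equiv 1$ on a neighborhood of $K$. Because $\supp\chi$ is a compact subset of the open set $M''\times N$, it projects to a compact subset of $M''$, so $\chi\omega^{p-1}$ is a smooth $(p-1,p-1)$-form with vertical support in $M''\times N$, with $\|\chi\omega^{p-1}\|_{\mathcal C^{l+2}}\leq C_{K,l}$ for a constant depending only on $K$, $l$, and the fixed $\chi$. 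Using $\ddc\Phi\geq 0$,
$$\|\ddc\Phi\|_K\lesssim\int_D\chi\,\ddc\Phi\wedge\omega^{p-1}=\langle\ddc\Phi,\chi\omega^{p-1}\rangle\leq\|\ddc\Phi\|_{-l-2}\,\|\chi\omega^{p-1}\|_{\mathcal C^{l+2}},$$
and combining with (i) delivers the desired bound. The only genuine technicality is the cutoff bookkeeping in (i); once integration by parts is in place, both assertions follow immediately from the derivative estimate together with the positivity of $\ddc\Phi$.
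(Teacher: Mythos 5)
Your proof is correct and follows essentially the same route as the paper: integration by parts reduces (i) to the estimate $\|\ddc\Omega\|_{\mathcal C^l}\lesssim\|\Omega\|_{\mathcal C^{l+2}}$ together with the observation that $\supp(\ddc\Omega)\subset\supp\Omega$ keeps the test form admissible, and (ii) follows by pairing the positive current $\ddc\Phi$ with a cutoff of $\omega^{p-1}$ and invoking (i). Your cutoff justification of the integration by parts (using that $\supp\Phi\cap\supp\Omega$ is compact in $D$ by the horizontal/vertical support geometry) is a correct and welcome elaboration of a step the paper states without detail.
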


Observe that the mass
$\|dd^c \Phi\|_K$ in the second item is well-defined since $dd^c \Phi$ is positive.

\begin{proof}
(i)
For every $\Phi\in\Psh_h(M'\times N')$ 
and every $(p-1,p-1)$-form
$\Omega$ with vertical support in $M''\times N$ we have
\[|\langle\ddc\Phi,\Omega\rangle|=|\langle\Phi,\ddc\Omega\rangle|\le \|\ddc\Omega\|_{\mathcal C^l} \|\Phi\|_{-l}
\lesssim 
\|\Omega\|_{\mathcal C^{l+2}} \|\Phi\|_{-l},
\] 
where the implicit constant in the last inequality is independent of $\Omega$, $\Phi$, and $l$. 
The first inequality above follows by observing that the support of $dd^c \Omega$
is contained in the support of $\Omega$, and hence in particular in $M''\times N$.

\medskip

(ii)
For every compact subset $K$ of $M''\times N$, there exists 
 a smooth function $\chi$ with vertical support in $M''\times N$
and such that $\1_{K}\leq \chi\leq 1$, where $\1_K$ is the characteristic function of $K$.
Since $\ddc \Phi \geq 0$, we have
\[
\|\ddc \Phi\|_{K} = \int_K \ddc \Phi \wedge  \omega^{p-1} \leq
\langle \ddc \Phi, \chi \omega^{p-1} \rangle
\leq \|\ddc \Phi\|_{-l-2} \|\chi \omega^p\|_{\mathcal C^{l+2}}
\lesssim 
\|\Phi\|_{-l} \|\chi \omega^p\|_{\mathcal C^{l+2}},
\]
where
 $\omega$ is the standard K\"{a}hler form of $\mathbb{C}^k$
and, by (i), the implicit constant in the last inequality is independent of $l$ and $\Phi$.
As  $\|\chi \omega^p\|_{\mathcal C^{l+2}}$
can be bounded by a constant depending only on $K$ and $l$, the assertion follows.
\end{proof}

\begin{definition}\label{d:holder-sp}
  Given  a constant $0<\alpha\leq 1$, a 
  positive closed $(p,p)$-current $T$ with vertical support in $M''\times N$ is said to
  \emph{have 
$(l,\alpha)$-H\"older continuous super-potentials}
  if $T$,  seen as a linear function on smooth  test $(k-p,k-p)$-forms with horizontal support 
in $M'\times N'$, 
extends to a function on  $\Psh_h(M'\times N')$ such that,
for every subset $\Fc \subset \Psh_h(M'\times N')$ which is bounded with respect
to
$\|\cdot \|_*$,
there exists a constant $C_{\mathcal{F}}$ such that 
\[|\langle T,\Phi_1-\Phi_2 \rangle|\le C_{\mathcal{F}}\dist_{-l}( \Phi_1,\Phi_2)^\alpha
\quad \mbox{for any}
\quad \Phi_1,\Phi_2\in\mathcal{F}.\]
\end{definition}

 Note that the function on $\mathcal F$
 defined by $T$ is seen as a \emph{super-potential} of $T$ in our point of view.

\begin{lemma}\label{l:smooth}
Every 
smooth positive closed $(p,p)$-form $T$ 
with vertical support in $M''\times N$
has $(l,1)$-H\"older continuous super-potentials
 for every $0<l<\infty$.
\end{lemma}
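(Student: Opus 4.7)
The plan is to observe that no serious extension work is needed: because $T$ is smooth, the pairing $\langle T,\Phi\rangle$ with any $\Phi\in\Psh_h(M'\times N')$ is already directly well defined. The support hypotheses (vertical for $T$ in $M''\times N$, horizontal for $\Phi$ in $M'\times N'$) ensure that the intersection of the two supports is relatively compact in $D$, so the integral $\int T\wedge\Phi$ makes sense without any truncation. This provides the required extension of the linear functional originally defined only on smooth test $(k-p,k-p)$-forms with horizontal support in $M'\times N'$.

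For the Lipschitz (i.e.\ $(l,1)$-H\"older) estimate, I would simply use $T$ itself as an admissible test form in the definition of $\|\cdot\|_{-l}$. Since $T$ has vertical support in $M''\times N$ and is smooth, the norm $\|T\|_{\Cc^l}$ is finite for every $0<l<\infty$, and $T/\|T\|_{\Cc^l}$ is thus admissible in the supremum defining the semi-norm on horizontal currents. Applied to the difference $\Phi_1-\Phi_2$, this immediately yields
$$|\langle T,\Phi_1-\Phi_2\rangle|\le \|T\|_{\Cc^l}\,\dist_{-l}(\Phi_1,\Phi_2),$$
so that $C_{\Fc}:=\|T\|_{\Cc^l}$ works as a constant in Definition~\ref{d:holder-sp}. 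In fact this constant is \emph{uniform} on all of $\Psh_h(M'\times N')$, so the extra hypothesis that $\Fc$ be bounded for $\|\cdot\|_*$ is not used at all in the smooth case.

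There is no real obstacle here; the lemma is essentially a sanity-check base case confirming that the definition is consistent and non-vacuous. The genuine work in the paper will be to upgrade this tautological bound for smooth $T$ into a meaningful estimate when $T$ is replaced by a non-smooth positive closed current (in particular by the Green currents $T^\pm$), for which the direct pairing with elements of $\Psh_h(M'\times N')$ is no longer available and the control through $\|\cdot\|_*$ on the family $\Fc$ becomes essential.
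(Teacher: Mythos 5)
Your proposal is correct and is essentially the paper's own argument: the paper normalizes so that $\|T\|_{\Cc^l}\le 1$ and uses $T$ itself as an admissible test form in the definition of $\|\cdot\|_{-l}$, yielding exactly the bound $|\langle T,\Phi_1-\Phi_2\rangle|\le \|T\|_{\Cc^l}\dist_{-l}(\Phi_1,\Phi_2)$. Your additional remarks (well-definedness of the pairing from the support conditions, and uniformity of the constant over all of $\Psh_h(M'\times N')$) are accurate but not needed beyond what the paper states.
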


\begin{proof}
 We can assume that $\|T\|_{\mathcal C^{l}}\leq 1$.
Since $T$ is vertical in $M''\times N$,
it follows that, for every $\Phi_1, \Phi_2\in \Psh_h (M'\times N')$, we have
\[|\langle T,\Phi_1-\Phi_2\rangle|
\leq \|\Phi_1-\Phi_2\|_{- {l}} = \dist_{- {l}}
(\Phi_1,\Phi_2).\]
The assertion follows.
\end{proof}

\begin{remark}\label{r:interpolation-prelim}
By interpolation techniques
\cite{Triebel}, one can see
that
 if $T$ has  $(l,\alpha)$-H\"older continuous super-potentials for some $0<l<\infty$ and $0<\alpha<1$, then, up to slighly modifying
 $M'$ and $N'$,
 it has $(l',\alpha')$-H\"older continuous super-potentials for every $0<l'<\infty$ and  some $0<\alpha'<1$ depending on $l,l',\alpha$ but independent of $T$.
In the dynamical setting 
to which we will be interested, because of 
the geometric behaviour of our maps, 
such modification of the domains is not necessary, see Lemma \ref{l:interpolation:T+}.
 \end{remark}

We conclude this section with the following result, giving the regularity of the solutions of the $dd^c$ operator with respect to the norms introduced above.

\begin{theorem}\label{l:ddcpsi}
    Let $\Omega$ be a horizontal 
    positive closed current of bidimension $(p-1,p-1)$ on $M\times N''$
    and $l$ a positive number.  There exist
    a horizontal negative $L^1$ form $\Psi$ of bidimension $(p,p)$ on $M'\times N'$, 
  and a positive constant $c$
  independent of $\Omega$ 
  such that
  $dd^c\Psi=\Omega$
  on  $M'\times N'$
   and  we have
\begin{equation}\label{eq:normineq}
     \|\Psi\|_{M'\times N'}\le c\|\Omega\|_{M\times N''}
    \quad \mbox{ and } 
    \quad
     \|\Psi\|_{-l}\le c\|\Omega\|_{-l-1}.
 \end{equation} 
 \end{theorem}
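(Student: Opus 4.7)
The plan is to construct $\Psi$ as an explicit integral transform of $\Omega$ adapted to the bounded convex product $M'\times N'$, and then to read off both estimates from the analytic properties of that transform. Since $M'\times N'$ is a bounded convex open set in $\mathbb{C}^k$, I would fix an integral $\ddc$-solving operator given by a Bochner--Martinelli--Koppelman-type kernel $K(z,w)$ on $\overline{M'\times N'}\times \overline{M'\times N'}$ of bidegree $(k-p,k-p;p-1,p-1)$ in $(z,w)$, negative, with the standard logarithmic singularity along the diagonal, and such that $\ddc K(z,w)=[\Delta]+R(z,w)$ (with $\ddc$ acting in $z$) in the sense of currents, where $R$ is smooth; such kernels underlie the Dinh--Sibony approach to the $\ddc$-equation on convex domains. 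Setting
\[
\Psi:=(\pi_1)_*\bigl(K\wedge \pi_2^*\Omega\bigr),
\]
and correcting by a smooth term coming from $R$, produces a negative $L^1$-form of bidimension $(p,p)$ on $M'\times N'$ satisfying $\ddc\Psi=\Omega$.

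Horizontality of $\Psi$ is then ensured by a cutoff in the $N$-direction. Since $\supp\Omega\subset M\times K_N$ for some compact $K_N\Subset N''$, multiplying the kernel by a cutoff in the $N$-variables equal to $1$ near $K_N$ and vanishing near $\partial N'$ preserves $\ddc\Psi=\Omega$ (up to a smooth error absorbed in $R$) and forces $\Psi$ to have vertical support in $M'\times K'_N$ with $K'_N\Subset N'$. The mass estimate is a direct consequence of the $L^1$-boundedness of $K$ on the bounded convex domain together with positivity of $\Omega$: $\|\Psi\|_{M'\times N'}\leq \sup_z\|K(z,\cdot)\|_{L^1}\cdot \|\Omega\|_{M\times N''}\lesssim \|\Omega\|_{M\times N''}$.

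For the weak-norm estimate, let $\Omega'$ be a test $(p,p)$-form with vertical support in $M''\times N$ and $\|\Omega'\|_{\Cc^l}\leq 1$. Fubini gives
\[
\langle \Psi,\Omega'\rangle=\langle \Omega,V\rangle,\qquad V:=(\pi_2)_*\bigl(K\wedge \pi_1^*\Omega'\bigr),
\]
where $V$ is a $(p-1,p-1)$-form. The crucial analytic input is the one-derivative gain of the Bochner--Martinelli operator in H\"older-type norms, which yields $\|V\|_{\Cc^{l+1}}\lesssim \|\Omega'\|_{\Cc^l}\leq 1$; combined with the fact that the kernel can be arranged so that $V$ has vertical support in $M''\times N$ (making it an admissible test form for $\|\Omega\|_{-l-1}$), this gives $|\langle\Psi,\Omega'\rangle|\leq \|\Omega\|_{-l-1}\|V\|_{\Cc^{l+1}}\lesssim \|\Omega\|_{-l-1}$. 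The main technical obstacle is to assemble a single $\ddc$-solving operator that simultaneously respects the product structure (horizontal input to horizontal output), carries the $L^1$ bound needed for the mass estimate, and whose dual has both the one-derivative gain and the correct support propagation under the test-form class used to define $\|\cdot\|_{-l}$; each of these properties is classical in isolation, but ensuring their compatibility on the convex product domain under the horizontal/vertical conventions is where the real work lies.
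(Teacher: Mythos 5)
Your proposal follows essentially the same route as the paper: the paper obtains $\Psi$ from the explicit kernel construction of Dinh--Nguyen--Sibony (\cite[Theorem 2.7]{DNS}), which yields the negativity, horizontality, and the mass bound, and then observes that the second estimate also follows from the explicit kernel representation — precisely the duality and one-derivative-gain mechanism you describe. The only differences are that the paper cites the kernel construction rather than rebuilding it, and it handles the support-propagation bookkeeping you rightly flag as the delicate point by allowing small enlargements of the intermediate domains (introducing $M^\star$ with $M'\Subset M^\star\Subset M$).
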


Observe that the masses in the first inequality are well-defined since $\Psi$ and $\Omega$ are negative and positive, respectively.

\begin{proof}
Fix a convex open set $M^\star$ with $M'\Subset M^\star \Subset M$.
The existence of a horizontal negative $L^1$ form
$\Psi$ on $M^\star \times N$ satisfying $\ddc \Psi=\Omega$ and
$ \|\Psi\|_{M^\star\times N'}\lesssim \|\Omega\|_{M\times N''}$
follows from \cite[Theorem 2.7]{DNS}.
As $\Psi$ is defined by means of explicit kernels, and $M''\Subset M'\Subset M^\star$, this also gives the second estimate in \eqref{eq:normineq}.
\end{proof}

\subsection{Currents with H\"older continuous super-potentials and their intersections}

We denote in this section by $\psh_1 (D)$ the set
of plurisubharmonic (psh) functions $u$ on $D$  with $\|u\|_{L^1}\leq 1$. Observe that this norm defines a distance 
on $\psh_1 (D)$.

\medskip
We consider in the following 
positive closed
vertical $(p,p)$-currents on $M''\times N$, but it is clear that the definitions and statements apply also for 
horizontal $(k-p,k-p)$-currents on $M\times N''$.

\begin{definition}\label{d:moderate}
Let $T$ be a vertical $(p,p)$-current on $M''\times N$. 
We say that $T$ is \emph{moderate}
if, for any compact set $K\subset M'\times N'$, there exist
two 
constants $c>0$ and $\beta>0$ such that for any psh function $u\in \psh_1 (D)$
we have
$$\langle T_{|K}\wedge \omega^{k-p}, e^{\beta|u|}\rangle \leq c.$$
\end{definition}

\begin{lemma}
Let $T$ be a positive closed
vertical $(p,p)$-current 
on $M''\times N$
with $(l,\alpha)$-H\"older continuous
super-potentials, for some $0<l<\infty$ and $0<\alpha<1$. 
Then $T$ is moderate.
\end{lemma}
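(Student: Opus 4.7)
The plan is to establish exponential decay of the $(T\wedge\omega^{k-p})$-measure of the sublevel sets $\{u<-m\}$, uniformly over $u\in\psh_1(D)$; integrating the tail then yields the moderate property. Fix a compact $K\Subset M'\times N'$ and $u\in\psh_1(D)$. Since $u$ is uniformly bounded above on $K$ by the sub-mean-value inequality applied to functions in $\psh_1(D)$, after subtracting a uniform constant we may assume $u\leq 0$.

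I would first construct a family of horizontal test currents in $\Psh_h(M'\times N')$ associated to the truncations of $u$. Fix a smooth cutoff $\chi\geq 0$ compactly supported in $N'$ and equal to $1$ on $\pi_N(K)$, and set $\eta_\chi:=\pi_N^*(\chi\,\omega_N^{k-p})$, a smooth positive closed horizontal $(k-p,k-p)$-form coinciding with $\omega_N^{k-p}$ on $K$. For $t\geq 0$, set $u_t:=\max(u,-t)$ and $\Phi_t:=u_t\,\eta_\chi$. Then $\Phi_t$ is horizontal with $dd^c\Phi_t=dd^c u_t\wedge \eta_\chi\geq 0$, so $\Phi_t\in\Psh_h(M'\times N')$; moreover, the inequalities $u\leq u_t\leq 0$ give $\|u_t\|_{L^1}\leq\|u\|_{L^1}\leq 1$, and Chern--Levine--Nirenberg then shows that the family $\{\Phi_t\}$ is uniformly bounded in $\|\cdot\|_*$ (both in $t$ and in $u$).

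The core step is to combine the H\"older super-potential hypothesis with Skoda's exponential integrability to boost polynomial decay to exponential decay. For $0\leq s<t$, positivity of $T\wedge\eta_\chi$ yields
\[
\langle T,\Phi_s-\Phi_t\rangle\;\geq\;(t-s)\,(T\wedge\eta_\chi)(\{u<-t\}),
\]
while the H\"older hypothesis applied to the bounded family $\{\Phi_t\}$ gives
\[
|\langle T,\Phi_s-\Phi_t\rangle|\;\leq\;C\|\Phi_s-\Phi_t\|_{-l}^\alpha\;\leq\;C'\|u_s-u_t\|_{L^1(K')}^\alpha
\]
for a compact $K'\supset\supp\eta_\chi$. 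Using $0\leq u_s-u_t\leq (t-s)\1_{\{u<-s\}}$ together with Skoda's uniform bound $\Leb(\{u<-s\}\cap K')\leq Ce^{-\beta_0 s}$ for $u\in\psh_1(D)$ with $u\leq 0$, we get $\|u_s-u_t\|_{L^1(K')}\leq C(t-s)e^{-\beta_0 s}$. Setting $t=s+1$ produces
\[
(T\wedge\eta_\chi)(\{u<-(s+1)\})\;\leq\;C''e^{-\alpha\beta_0 s},
\]
and integrating the tail yields $\int e^{\gamma|u|}\,d(T\wedge\eta_\chi)<\infty$ for any $\gamma<\alpha\beta_0$. The analogous estimate for $T\wedge\omega^{k-p}|_K$ follows by expanding $\omega^{k-p}=\sum_j\binom{k-p}{j}\omega_M^j\wedge\omega_N^{k-p-j}$ and treating each summand by the same construction with an appropriate horizontal test form in place of $\eta_\chi$.

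The main obstacle is precisely this boosting of decay rates: the H\"older super-potential bound alone controls $(T\wedge\eta_\chi)(\{u<-m\})$ only polynomially in $m$, and it is the insertion of Skoda's uniform exponential bound for Lebesgue measure into the right-hand side of the H\"older estimate that produces the exponential rate needed for the moderate property.
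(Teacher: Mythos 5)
Your argument is essentially the paper's proof: truncate $u_m=\max(u,-m)$, multiply by a smooth positive closed horizontal form to land in $\Psh_h(M'\times N')$, control $\dist_{-l}$ of the truncations via Skoda's uniform exponential estimate for $\Leb(\{u<-m\})$, and let the $(l,\alpha)$-H\"older super-potential convert this into exponential decay of the tail of $T\wedge\Omega$, which integrates to the moderate property. The only loose end is your final reduction from $\eta_\chi=\pi_N^*(\chi\,\omega_N^{k-p})$ to $\omega^{k-p}|_K$: the cut-off summands $\chi\,\omega_M^j\wedge\omega_N^{k-p-j}$ with $j\geq 1$ are not closed, so $u_t$ times them need not lie in $\Psh_h$; the paper sidesteps this by invoking \cite[Lemma 2.3]{BDR}, which supplies a single smooth positive \emph{closed} horizontal form $\Omega_K$ with $\Omega_K\geq\omega^{k-p}$ on $K$, and you should use that form in place of $\eta_\chi$ from the start.
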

\proof
Fix a psh function $u$ on $D$. 
By reducing $D$
slightly
and subtracting from $u$ a constant, we
 can assume that $u\leq 0$. 
For every $m\in \mathbb N$, 
define $u_m:=\max(u,-m)$. Observe that $u-u_m$ is negative, supported on $\{u\leq -m\}$, and equal to $u+m$ on this set. 
It follows
that
 \begin{equation}\label{eq:skoda2}
|\langle \omega^k, u-u_m\rangle|
=\int_{\{u\leq -m\}} |u-u_m|
\lesssim 
e^{-\alpha_1 m} \int e^{\alpha_1 |u+m|}
\lesssim e^{-\alpha_1 m}.\end{equation}
for some positive
constant
$\alpha_1$,
where 
the integrals
are taken with respect to the Lebesgue measure and last inequality follows from 
Skoda's estimates 
\cite{S82}.

\medskip

Fix a smooth 
positive
closed
horizontal form $\Omega_K$ on $M'\times N'$
such that $\Omega_K\geq \omega^{k-p}$ on $K$, see for instance \cite[Lemma 2.3]{BDR}. Observe that we also have $\Omega_K \lesssim \omega^{k-p}$
 on $M''\times N'$.
Since $u$ and $u_m$ are psh and $\Omega_K$ is closed,
both $u\Omega_K$ and $u_m \Omega_{K}$ belong to $\Psh_h (M'\times N')$.
Fix 
a
$(p,p)$-form
$\Omega$
with vertical support in $M''\times N$
and such that $\|\Omega\|_{\mathcal C^l}\leq 1$.
As $|\Omega_K \wedge \Omega| \lesssim \omega^k$,
it follows from \eqref{eq:skoda2}
that
$|\langle \Omega_K\wedge \Omega, u-u_m\rangle| \lesssim e^{-\alpha_1 m}$, which implies that
$$\dist_{-l} (u\Omega_K, u_m\Omega_K) \lesssim e^{-\alpha_1 m}.$$
The assumption on the super-potentials of $T$,
together with the fact that $T$ and $u_m-u$ are positive,
implies that
$$
0\leq  \langle T\wedge \Omega_K, u_m- u \rangle
=\langle T, u_m\Omega_K \rangle - \langle T, u\Omega_K\rangle \lesssim e^{-\alpha_2 m}$$
for some positive 
constant
$\alpha_2$.
Hence, since $\Omega_K
{\geq}
\omega^{k-p}$,
we have
$$0\leq \langle T_{|K}\wedge \omega^{k-p}, u_m-u\rangle  \lesssim e^{-\alpha_2 m}
\quad \mbox{ for every } m\in \mathbb N.$$
The assertion follows by choosing any $\beta<\alpha_2$ in Definition \ref{d:moderate}.
\endproof

\begin{lemma}\label{l:T-holder-L1}
Let $T$ be 
a positive closed
vertical $(p,p)$-current 
on $M''\times N$
with $(l,\alpha)$-H\"older continuous
super-potentials, for some $0<l<\infty$ and $0<\alpha<1$.
Then we have
$$\dist_{-l} (u_1T,u_2T) \leq c \|u_1-u_2\|^\gamma_{L^1}
\quad \mbox{ for all } u_1, u_2, \in \psh_1 (D),$$
for some positive constants $c$ and $\gamma$ depending on $l$ and $\alpha$ but
independent of $u_1$ and $u_2$.
\end{lemma}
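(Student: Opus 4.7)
The plan is to reduce bounding $\dist_{-l}(u_1T,u_2T)$ to pairings of $T$ with currents of the form $v\,\Omega_0$, where $v$ is psh and $\Omega_0$ is a fixed smooth positive closed horizontal form on $M'\times N'$. Such currents belong to $\Psh_h(M'\times N')$, so the H\"older super-potential assumption on $T$ applies to them directly. The bridge between an arbitrary test form and such simple currents is the elementary pointwise domination $|\Omega|\lesssim \omega^{k-p}$ followed by the DSH identity
\[
|u_1-u_2|=\bigl(\max(u_1,u_2)-u_1\bigr)+\bigl(\max(u_1,u_2)-u_2\bigr),
\]
which rewrites the absolute value as a sum of two non-negative differences of psh functions.

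More concretely, I would fix a test $(k-p,k-p)$-form $\Omega$ with horizontal support in $M\times N''$ and $\|\Omega\|_{\mathcal{C}^l}\leq 1$. The pointwise estimate $|\Omega|\lesssim \omega^{k-p}$ together with the positivity of $T$ gives $|T\wedge\Omega|\lesssim T\wedge\omega^{k-p}$ as measures, so that
\[
|\langle(u_1-u_2)T,\Omega\rangle|\;\lesssim\;\int|u_1-u_2|\,dT\wedge\omega^{k-p}.
\]
As in the proof of the previous lemma, I would then take a smooth positive closed horizontal form $\Omega_0$ on $M'\times N'$ satisfying $\Omega_0\gtrsim \omega^{k-p}$ on a compact neighborhood of $\supp T\cap(M'\times N')$ (provided by Lemma~2.3 of \cite{BDR}), replace $\omega^{k-p}$ by $\Omega_0$, and apply the DSH identity above to reduce the problem to estimating each quantity $\langle T,(\max(u_1,u_2)-u_i)\Omega_0\rangle$ for $i=1,2$.

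Each of $u_i\Omega_0$ and $\max(u_1,u_2)\Omega_0$ lies in $\Psh_h(M'\times N')$, because for $v$ psh and $\Omega_0$ positive closed one has $dd^c(v\Omega_0)=dd^cv\wedge\Omega_0\geq 0$. The classical mass bound $\|dd^cv\|_{M'\times N'}\lesssim \|v\|_{L^1(D)}$ for psh $v$, combined with $\|\max(u_1,u_2)\|_{L^1(D)}\leq 2$ whenever $u_1,u_2\in\psh_1(D)$, shows that the family
\[
\mathcal{F}=\bigl\{v\,\Omega_0 \,:\, v\in\{u_1,u_2,\max(u_1,u_2)\},\ u_1,u_2\in\psh_1(D)\bigr\}
\]
is uniformly bounded in $\|\cdot\|_*$. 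The H\"older super-potential hypothesis then yields a constant $C$, independent of $(u_1,u_2)$, such that
\[
|\langle T,(\max(u_1,u_2)-u_i)\Omega_0\rangle|\;\leq\;C\,\dist_{-l}\bigl(\max(u_1,u_2)\Omega_0,u_i\Omega_0\bigr)^\alpha.
\]
A direct pairing estimate against test forms of $\mathcal{C}^l$-norm at most one yields $\dist_{-l}(\max(u_1,u_2)\Omega_0,u_i\Omega_0)\lesssim \|\max(u_1,u_2)-u_i\|_{L^1}\leq \|u_1-u_2\|_{L^1}$, using $\max(u_1,u_2)-u_i=(u_{3-i}-u_i)_+$. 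Concatenating all these estimates gives $\dist_{-l}(u_1T,u_2T)\lesssim \|u_1-u_2\|_{L^1}^\alpha$, so one can take $\gamma=\alpha$.

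The only delicate point, which I would verify with care, is the uniformity in $(u_1,u_2)$ of the $\|\cdot\|_*$-bound on $\mathcal{F}$: this is precisely what allows a single H\"older constant $C_\mathcal{F}$ from Definition~\ref{d:holder-sp} to be used for every admissible pair of psh functions. Everything else is standard domination or rewriting of pairings.
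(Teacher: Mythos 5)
Your proof is correct, but it follows a genuinely different route from the paper's. The paper proceeds by regularization: it sets $u_{i,\epsilon}=u_i*\rho_\epsilon$, bounds $\dist_{-l}(u_{i,\epsilon}T,u_iT)\lesssim\epsilon^{\alpha'}$ using $u_{i,\epsilon}\geq u_i$ together with the H\"older hypothesis applied to the bounded family $\{u_{i,\epsilon}\Psi,u_i\Psi\}$, controls $\dist_{-l}(u_{1,\epsilon}T,u_{2,\epsilon}T)$ via the blow-up estimate $\|u_{1,\epsilon}-u_{2,\epsilon}\|_{\mathcal C^l}\lesssim\epsilon^{-2k-l}\|u_1-u_2\|_{L^1}$, and then optimizes in $\epsilon$, which yields the exponent $\gamma=\alpha'/(1+2k+l)$ for any $\alpha'<\alpha$. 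You bypass the regularization entirely: the identity $|u_1-u_2|=(\max(u_1,u_2)-u_1)+(\max(u_1,u_2)-u_2)$ exhibits $|u_1-u_2|\,\Omega_0$ as an explicit element of $\Dsh_h(M'\times N')$ belonging to a single $\|\cdot\|_*$-bounded family as $(u_1,u_2)$ ranges over $\psh_1(D)^2$, so the H\"older hypothesis applies in one shot and gives the sharper exponent $\gamma=\alpha$. Your initial domination step is the same device the paper uses to pass from $\langle T,(u_\epsilon-u)\Psi\rangle$ to $\dist_{-l}(u_\epsilon T,uT)$, and it relies on the same implicit identification of the extended pairing $\langle T,v\Omega_0\rangle$ with $\int v\,T\wedge\Omega_0$ for $v$ psh, so you are not assuming more than the paper does. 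Two minor points to tidy up: the compact set on which $\Omega_0\gtrsim\omega^{k-p}$ is needed should be, say, $\overline{M''}\times\overline{N''}$, which contains $\supp T\cap\supp\Omega$ for every admissible test form $\Omega$ (a ``compact neighborhood of $\supp T\cap(M'\times N')$'' does not exist inside $M'\times N'$, since $\supp T$ is only vertical there); and the uniform $\|\cdot\|_*$-bound rests on the Chern--Levine--Nirenberg inequality $\|dd^cv\|_{M'\times N'}\lesssim\|v\|_{L^1(D)}$, valid because $M'\times N'\Subset D$ --- the same estimate the paper invokes when asserting that $\{u_\epsilon\Psi,u\Psi\}$ is bounded for $\|\cdot\|_*$.
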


Observe that, as $T$ is vertical in $M''\times N$, it is also vertical in $M'\times N'$. Hence the distance in the statement is well-defined, see Remark \ref{r:def-vertical}.

\proof
Let $\rho$ be a smooth positive 
{radial}
function compactly supported on the unit ball of $\mathbb C^k$
and such that $\int \rho \omega^k =1$.
For every $\epsilon>0$,
set
$\rho_\epsilon (\cdot) := \epsilon^{-2k} \rho (\epsilon^{-1} \cdot)$. 
For every $u\in \psh_1 (D)$ and
every $\epsilon$ sufficiently small, the convolution
$u_{\epsilon} := u * \rho_\epsilon$
is well-defined on $M'\times N'$.
One can check that 
we have
\[
u \leq u_{\epsilon}
\quad \mbox{ and } \quad 
\|u_{\epsilon}-u\|_{L^1
{  (M'\times N')}
}\lesssim \epsilon |\log \epsilon|.\] 
Since the $u_{\eps}$'s and $u$ are psh,
it follows that for
every
fixed
positive closed smooth  
form $\Psi$ of bidimension $(p,p)$
and horizontal in $M'\times N'$,
all the products $u_{\epsilon}\Psi$ and $u \Psi$
belong to $\Psh_h (M'\times N')$. They also
form 
a bounded family with respect to $\|\cdot\|_*$.
Moreover, for every vertical $(p,p)$-form
$\Omega$
on $M''\times N$,
we have
\[
|\langle \Omega, 
(u_{\epsilon}-u)\Psi\rangle|
= |\langle \Omega \wedge \Psi, 
(u_{\epsilon}-u)\rangle|
\lesssim 
{  
\int_{M'\times N'}
(u_{\epsilon}-u) \omega^k
=}
\|u_{\epsilon}-u\|_{L^1
{  (M'\times N')}
},
\]
which implies that
$\dist_{-l}(u_{\epsilon}\Psi, u\Psi)\lesssim \epsilon |\log \epsilon|$.
Since $T$ has 
$(l,\alpha)$-H\"older continuous super-potentials, we deduce that
$$\langle T,(u_{\epsilon}-u)\Psi\rangle \lesssim 
\|u_\epsilon-u\|_{L^1 {  (M'\times N')}}^{\alpha}\lesssim
( \epsilon |\log \epsilon|)^{\alpha}\leq \epsilon^{\alpha'}$$
for every $\alpha'<\alpha$.
This and the inequality $u_{\epsilon}\geq u$ imply that
\begin{equation}\label{eq:T-est-1}
\dist_{-l}(u_{\epsilon} T, u T) \lesssim \epsilon^{\alpha'}.\end{equation}

\medskip

Let now consider $u_1$ and $u_2$ as in the statement. 
Denote by $u_{1,\epsilon}=u_1 * \rho_\epsilon$ and $u_{2,\epsilon}= u_2 * \rho_\epsilon$ 
the regularizations by convolution
of $u_1$ and $u_2$ respectively. It follows from the above and \eqref{eq:T-est-1} that there exists $\alpha'>0$ such that
\begin{equation}\label{eq:T-est-1b}
\dist_{-l}(u_{1,\epsilon} T, u_1 T) \lesssim \epsilon^{\alpha'}
\quad
\mbox{ and }
\quad
\dist_{-l}(u_{2,\epsilon} T, u_2 T) \lesssim \epsilon^{\alpha'}.
\end{equation}
We now get a similar estimate for $\dist_{-l}(u_{1,\epsilon} T, u_{2,\epsilon}T)$.
By the definition of
$u_{1,\epsilon}$
and $u_{2,\epsilon}$,
we
have
$$\|u_{1,\epsilon} -u_{2,\epsilon}\|_{\mathcal C^l}
=
\|
(u_1-u_2) * \rho_\epsilon
\|_{\mathcal C^l}
\lesssim \epsilon^{-2k-l} \|u_1-u_2\|_{L^1}.$$
Again by the fact that $T$ has $(l,\alpha)$-H\"older continuous super-potentials, we deduce that
\begin{equation}\label{eq:T-est-2}
\dist_{-l}(u_{1,\epsilon} T, u_{2,\epsilon}T) \lesssim [\epsilon^{-2k-l} \|u_1-u_2\|_{L^1}]^{\alpha}.\end{equation}
Together with \eqref{eq:T-est-1b}, this gives
\[
\dist_{-l}(u_{1} T, u_{2}T)
\lesssim \epsilon^{\alpha'} + 
[\epsilon^{-2k-l} \|u_1-u_2\|_{L^1}]^{\alpha}
\quad \mbox{ for every } \alpha'<\alpha.\]
Choosing $\epsilon := {  c_0}
\|u_1-u_2\|_{L^1}^{1/(1+2k+l)}$
{for some sufficiently small constant $c_0$},
we see that
\[
\dist_{-l}(u_{1} T, u_{2}T) \lesssim
\|u_1-u_2\|_{L^1}^{\alpha'/(1+2k+l)}.
\]
The assertion follows.
\endproof

\begin{proposition}\label{p:MA-gen}
Let $T_v$
be a positive closed vertical $(p,p)$-current in $M'' \times N$ and 
$T_h$ a positive closed horizontal $(k-p,k-p)$-current in $M\times N''$.  Assume that both $T_v$ and $T_h$ have
$(l, \alpha)$-H\"older continuous super-potentials. Then the measure $T_v \wedge T_h$ is well-defined
and is the Monge-Ampère of a H\"older continuous 
psh 
function. In particular, it has positive Hausdorff dimension. 
\end{proposition}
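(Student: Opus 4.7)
The plan is to construct the measure $\mu := T_v \wedge T_h$ using the super-potential machinery developed in Section \ref{s:holder-general}, to establish an $L^1$-H\"older pairing estimate against psh functions as well as moderateness of $\mu$, and then to invoke a Ko{\l}odziej--Dinh-Nguyen-Sibony type solvability theorem for the complex Monge-Amp\`ere equation to produce the desired H\"older continuous potential. The positive Hausdorff dimension will then follow from a standard volume-ball estimate for Monge-Amp\`ere measures of H\"older psh functions.

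First, I would define $\mu$ by the pairing $\langle \mu,\chi\rangle := \langle T_v, \chi T_h\rangle$ for a smooth test function $\chi$ with compact support in $D$, where the right hand side is interpreted via the super-potential of $T_v$. Concretely, after adding and subtracting a large multiple of $\|z\|^2$ one decomposes $\chi = \chi_1 - \chi_2$ with $\chi_j$ psh, so that $\chi_j T_h \in \Psh_h(M'\times N')$ (here the roles of horizontal and vertical are symmetric, cf. Remark \ref{r:def-vertical}), and then $\langle T_v, \chi T_h\rangle = \langle T_v, \chi_1 T_h\rangle - \langle T_v, \chi_2 T_h\rangle$ makes sense by Definition \ref{d:holder-sp}. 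Combining the $(l,\alpha)$-H\"older super-potential property of $T_v$ with Lemma \ref{l:T-holder-L1} applied to $T_h$ yields the quantitative bound
\[
|\langle \mu, u_1 - u_2\rangle| \lesssim \|u_1 - u_2\|_{L^1}^{\gamma}, \qquad u_1, u_2 \in \psh_1(D),
\]
for some $\gamma > 0$. Positivity of $\mu$ follows by approximating non-negative test functions by differences of psh functions and passing to the limit.

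Second, moderateness of $\mu$ in the sense of Definition \ref{d:moderate} is obtained by a Skoda-type truncation $u_m := \max(u,-m)$: the estimate above applied to $u - u_m$, whose $L^1$-norm decays exponentially in $m$ by Skoda's inequality, yields $\langle \mu, u - u_m\rangle \lesssim e^{-\beta m}$ for some $\beta > 0$, which is exactly the exponential integrability required. I would then invoke a generalization of Ko{\l}odziej's theorem (in the form of Dinh-Nguyen-Sibony) for moderate measures: on a suitable neighborhood of the support of $\mu$, the equation $(\ddc u)^k = c\mu$ admits a H\"older continuous psh solution $u$. Finally, the Hausdorff dimension statement follows from the classical estimate $(\ddc u)^k(B(x,r)) \lesssim r^{\delta}$ for H\"older psh $u$ with some $\delta > 0$, which forces $\mu$ to vanish on sets of Hausdorff dimension strictly less than $\delta$.

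The main obstacle lies in the Monge-Amp\`ere step: the standard Ko{\l}odziej-type theorems are stated in compact K\"ahler or globally strictly pseudoconvex settings, and one needs to adapt them (or embed $D$ appropriately, e.g.\ into $\mathbb P^k$, and extend $\mu$ by zero) to work on our horizontal-vertical domain. The moderateness and $L^1$-H\"older estimates from the first two steps are precisely the capacity-type input needed to run the Ko{\l}odziej iteration and obtain H\"older regularity of $u$.
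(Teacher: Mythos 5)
Your central estimate --- the $L^1$-H\"older continuity of $u \mapsto \langle T_v \wedge T_h, u\rangle$ on $\psh_1(D)$, obtained by pairing the H\"older super-potential of one current with Lemma \ref{l:T-holder-L1} applied to the other --- is exactly the paper's argument, so the heart of the proof matches, as does the well-definedness/positivity discussion and the final Hausdorff-dimension step (the paper cites \cite{Sibony99} for the ball-volume estimate you describe). Where you diverge is in converting that estimate into the existence of a H\"older continuous potential. The paper simply invokes the characterization theorem of Dinh--Nguyen \cite{DN}, whose hypothesis is precisely the $L^1$-H\"older continuity of the functional $u\mapsto\langle\mu,u\rangle$ on $\psh_1(D)$; this disposes in one line of the ``main obstacle'' you flag about adapting Ko{\l}odziej-type results to the local setting. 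Your alternative bridge --- proving moderateness and then invoking a Ko{\l}odziej-type theorem ``for moderate measures'' --- does not work as stated: moderateness (exponential integrability of psh functions) yields capacity estimates sufficient for bounded or continuous solutions of the Monge--Amp\`ere equation, but it is strictly weaker than the H\"older condition and does not by itself give H\"older regularity of the potential; indeed the point of \cite{DN} is that the correct characterization is the $L^1$-H\"older property, of which moderateness is only a consequence (compare the unnumbered lemma in Section \ref{s:holder-general} deriving moderateness from the H\"older super-potential hypothesis). Since you already have the stronger estimate in hand, the clean fix is to cite \cite{DN} directly rather than route through moderateness and a Ko{\l}odziej iteration.
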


\begin{proof}
It follows from
Definition \ref{d:holder-sp}
that the intersection $T_v \wedge T_h$ is well-defined.
It is a positive measure since both $T_v$ and $T_h$
are positive.
By \cite{DN},
in order to show that this measure is
a Monge-Ampère as in the statement,
it is enough to show that $u\mapsto\langle T_v\wedge T_h, u\rangle$
is H\"older continuous on $\psh_1 (D)$ with respect to the distance induced by the norm $L^1$. 
By Lemma \ref{l:T-holder-L1} and Definition \ref{d:holder-sp},
for every $u_1, u_2\in \psh_1 (D)$
we have
\[
|\langle T_v\wedge T_h, u_1 - u_2 \rangle|
=|\langle T_h, u_1 T_v - u_2 T_v\rangle|
\lesssim 
\|u_1 T_v - u_2 T_v\|_{-l}^{\alpha}
\lesssim
\|u_1-u_2\|^{\alpha \gamma},
\]
for some positive constant $\gamma$ as in Lemma \ref{l:T-holder-L1}.
The first assertion follows.
The last property is true for
the
Monge-Ampère measure associated with any
H\"older continuous psh function, see for instance
\cite[Théorème 1.7.3]{Sibony99}.
\end{proof}

\section{H\"older regularity of the Green currents}\label{s:holder}

In this section,
we prove the following theorem. It gives the first assertion
and,
by Proposition \ref{p:MA-gen}, 
the third assertion of 
Theorem \ref{t:intro-all}.
We fix an integer $1\le p\le k-1$ and convex open bounded subsets   $ M\Subset \mathbb C^p$ and $ N \Subset \mathbb C^{k-p}$ and set $D:= M\times N$.  We also fix  a Hénon-like map $f$ from a vertical open subset of $D$ to a horizontal open subset of $D$, and convex open subsets 
$M'' \Subset M' \Subset M$ and $N'' \Subset N' \Subset N $  satisfying 
\begin{equation}\label{e:hp-inclusion}
f^{-1}(D) \subset M'' \times N
\quad \mbox{ and } \quad
f(D) \subset M \times N''.
\end{equation}

\begin{theorem}\label{t:super-Holder}
Let $f$ be a H\'enon-like map 
as above.
If $d_{p-1}^+<d$, then $T^+$ has $(l,\alpha)$-H\"older continuous super-potentials for every $0<l<\infty$ and some $0<\alpha<1$ depending on $l$.
\end{theorem}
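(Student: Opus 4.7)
The plan is to realize $T^+$ as a telescoping series $T^+ = T^+_N + \ddc V_N$ whose tail $V_N$ decays geometrically in $\|\cdot\|_{-l}$ under the hypothesis $d^+_{p-1} < d$, and then to balance this decay against the exponential growth of the $\mathcal{C}^l$-norms of the smooth approximations $T^+_N$ in order to extract a positive H\"older exponent. I first fix a smooth vertical positive closed $(p,p)$-form $\Omega_0$ of slice mass $1$ supported in $M''\times N$, and set $T^+_n := d^{-n}(f^n)^*\Omega_0$, so that $T^+_n\to T^+$ by Theorem \ref{t:greencurrents}. The ``error'' $R := d^{-1}f^*\Omega_0 - \Omega_0$ is a smooth vertical closed $(p,p)$-form with zero slice mass. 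By a controlled $\ddc$-Poincar\'e lemma adapted to the vertical setting (in the spirit of Theorem \ref{l:ddcpsi}), one produces a smooth vertical $(p-1,p-1)$-form $\gamma$ with $\ddc\gamma = R$ and $\mathcal{C}^l$-norm controlled in terms of that of $\Omega_0$. Setting $\gamma_n := d^{-n}(f^n)^*\gamma$, the identity $T^+_{n+1}-T^+_n = d^{-n}(f^n)^*R = \ddc\gamma_n$ then yields
\[T^+ = T^+_N + \ddc V_N,\qquad V_N := \sum_{n\ge N}\gamma_n.\]

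The quantitative core of the proof is the decay $\|\gamma_n\|_{-l}\lesssim d^+_{p-1,n}/d^n$. By duality, for any horizontal $(p-1,p-1)$-bidimension test form $\eta$ with $\|\eta\|_{\mathcal{C}^l}\le 1$,
\[\langle \gamma_n,\eta\rangle = d^{-n}\langle\gamma,(f^n)_*\eta\rangle.\]
Dominating $\pm\eta$ by bounded multiples of a fixed horizontal positive closed $(p-1,p-1)$-current (in the spirit of the argument in Lemma \ref{l:l+2}) and invoking Definition \ref{d:degrees-currents}, one obtains $\|(f^n)_*\eta\|\lesssim d^+_{p-1,n}$. Since $d^+_{p-1}<d$ by hypothesis, summing the geometric series gives $\|V_N\|_{-l}\lesssim \lambda^N$ with $\lambda := d^+_{p-1}/d < 1$.

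Given $\Phi_1,\Phi_2$ in a $\|\cdot\|_*$-bounded family $\Fc\subset\Psh_h(M'\times N')$, I first establish the estimate assuming that $\Phi_1,\Phi_2$ are smooth; the general case will follow by a standard regularization/density argument. I split
\[\langle T^+,\Phi_1-\Phi_2\rangle = \langle T^+_N,\Phi_1-\Phi_2\rangle + \langle V_N, \ddc(\Phi_1-\Phi_2)\rangle.\]
The first term is bounded by $\|T^+_N\|_{\mathcal{C}^l}\dist_{-l}(\Phi_1,\Phi_2)\lesssim A^N\dist_{-l}(\Phi_1,\Phi_2)$ for some $A=A(l)>1$ reflecting the exponential growth of the $\mathcal{C}^l$-derivatives of $f^n$. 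The second is bounded by $C(\Fc)\lambda^N$ using the decay of $\|V_N\|_{-l}$, the $\|\cdot\|_*$-boundedness of $\Fc$, and Lemma \ref{l:l+2} to absorb the two derivatives in $\ddc$. Balancing $A^N\dist_{-l}\sim \lambda^N$ by the choice $N\sim \log(1/\dist_{-l})/\log(A/\lambda)$ finally yields
\[|\langle T^+,\Phi_1-\Phi_2\rangle|\lesssim \dist_{-l}(\Phi_1,\Phi_2)^{\alpha},\qquad \alpha = \frac{\log(d/d^+_{p-1})}{\log(Ad/d^+_{p-1})}\in(0,1).\]

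The main obstacle is producing the vertical primitive $\gamma$ of $R$ in a form whose $\mathcal{C}^l$-control propagates correctly through the iterates $(f^n)^*\gamma$, which are supported on shrinking vertical sets, and justifying the pairing $\langle V_N, \ddc\Phi\rangle$ on the full space $\Psh_h$. These steps require a careful choice of the nested domains $M''\Subset M'\Subset M$ and $N''\Subset N'\Subset N$, a precise application of Theorem \ref{l:ddcpsi} (or its vertical analog) compatible with the bidegree of $R$, and a clean tracking of the solution estimates along the iteration.
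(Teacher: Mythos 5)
Your overall architecture is the mirror image of the paper's: you telescope and regularize the current $T^+$ itself (writing $T^+ = T^+_N + \ddc V_N$ with $T^+_N$ smooth and $V_N$ a geometrically small tail), whereas the paper leaves $T^+$ alone and instead telescopes the test current, pushing forward $\ddc\Phi_i$, solving $\ddc\Phi_{i,n}=d^{-n}(f^n)_*(\ddc\Phi_i)$ at every stage via Theorem \ref{l:ddcpsi}, and pairing the resulting $\ddc$-closed pieces $\Psi_m$ against $T^+-\Theta$. Your balancing of $A^N\dist_{-l}$ against $(d^+_{p-1}/d)^N$ is exactly right and reproduces the paper's exponent. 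However, there is a genuine gap in your estimate of the tail term. You bound $\langle V_N,\ddc(\Phi_1-\Phi_2)\rangle$ by combining the decay of $\|V_N\|_{-l}$ with the $\|\cdot\|_*$-boundedness of $\Fc$, but these two pieces of information do not pair: $\ddc(\Phi_1-\Phi_2)$ is only a difference of positive closed currents of bounded mass (a current of order $0$), not a form with bounded $\mathcal C^l$-norm, so the smallness of $\|V_N\|_{-l}$ says nothing about this pairing. Conversely, $V_N=\sum_{n\ge N}\gamma_n$ does not converge in $\mathcal C^0$ — the sup-norms of $\gamma_n=d^{-n}(f^n)^*\gamma$ grow like $(A/d)^n$ — so you cannot trade the other way either. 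Lemma \ref{l:l+2} does not rescue this: it controls $\|\ddc\Phi\|_{-l-2}$ by $\|\Phi\|_{-l}$, which is the wrong direction (you would then need $\|V_N\|_{\mathcal C^{l+2}}$ bounded, which fails).

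The estimate you want is true, but for a different reason, and fixing it essentially transposes your argument into the paper's. Writing $\langle\ddc\gamma_n,\Phi^\star\rangle=d^{-n}\langle\gamma,(f^n)_*(\ddc\Phi^\star)\rangle$, the correct mechanism is that $\ddc\Phi_i$ is itself a \emph{positive closed horizontal} current of bidimension $(p-1,p-1)$ and bounded mass, so $\|(f^n)_*(\ddc\Phi_i)\|\lesssim d^+_{p-1,n}$ by Definition \ref{d:degrees-currents}; pairing against a \emph{bounded} primitive $\gamma$ then gives $|\langle\ddc\gamma_n,\Phi^\star\rangle|\lesssim d^{-n}d^+_{p-1,n}$. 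This is precisely the paper's $\Xi_{i,n}$ and the source of the $\delta^{-n}$ bounds. Note that your intermediate claim $\|(f^n)_*\eta\|\lesssim d^+_{p-1,n}$ for arbitrary $\mathcal C^l$-bounded test forms $\eta$ is irrelevant to the real difficulty: the dynamical degree must be applied to $\ddc\Phi_i$, not to the forms dual to $V_N$. Two further points remain unaddressed and are not routine: (a) the existence of a primitive $\gamma$ of $R$ that is simultaneously bounded and supported (or at least defined, with vanishing boundary contributions in the integration by parts $\langle\ddc\gamma_n,\Phi^\star\rangle=\langle\gamma_n,\ddc\Phi^\star\rangle$) compatibly with the horizontal/vertical geometry — Theorem \ref{l:ddcpsi} only produces $L^1$ solutions with mass control, and it is exactly to avoid iterating pullbacks of such a primitive that the paper solves the $\ddc$-equation afresh at each stage on the push-forward side; and (b) your reduction to smooth $\Phi_i$ "by density" risks circularity, since continuity of $\Phi\mapsto\langle T^+,\Phi\rangle$ on $\Psh_h$ is part of what is being proved (the paper avoids any such reduction, and where a qualitative continuity input is needed, in Lemma \ref{l:est-T+-Theta}, it quotes the continuity of the super-potentials of $T^+$ from \cite{DNS} together with a compactness argument).
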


The following lemma is the precise version of what was announced in Remark \ref{r:interpolation-prelim}.

\begin{lemma}\label{l:interpolation:T+}
Let $f$ be a H\'enon-like map as above.
If $T^+$ has  $(l,\alpha)$-H\"older continuous super-potentials for some $0<l<\infty$ and $0<\alpha<1$, then it has $(l',\alpha')$-H\"older continuous super-potentials for every $0<l'<\infty$ and  some $0<\alpha'<1$ depending on $l,l',\alpha$.
\end{lemma}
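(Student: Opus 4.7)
The proof splits into two cases.

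If $l'\le l$, the claim is immediate: the constraint $\|\Omega\|_{\Cc^l}\le 1$ is more restrictive than $\|\Omega\|_{\Cc^{l'}}\le 1$, so $\|\cdot\|_{-l}\le\|\cdot\|_{-l'}$ and $\dist_{-l}\le\dist_{-l'}$; hence the $(l,\alpha)$-H\"older bound directly yields the $(l',\alpha)$-H\"older bound with the same constant and exponent.

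For the nontrivial case $l'>l$, the plan is to prove an interpolation inequality of Triebel type:
\[
\|\Phi\|_{-l}\ \lesssim_{\mathcal F}\ \|\Phi\|_{-l'}^{\theta}\, \|\Phi\|_{-l_0}^{1-\theta}, \qquad \theta=\frac{l-l_0}{l'-l_0}\in(0,1),
\]
for some fixed $l_0\in(0,l)$ and every $\Phi=\Phi_1-\Phi_2$ with $\Phi_1,\Phi_2$ in a $\|\cdot\|_*$-bounded family $\mathcal F\subset\Psh_h(M'\times N')$. I would establish this by regularizing the test forms. Given a $(p,p)$-form $\Omega$ with vertical support in $M''\times N$ and $\|\Omega\|_{\Cc^l}\le 1$, set $\Omega_\epsilon:=\chi\cdot(\Omega*\rho_\epsilon)$, where $\rho_\epsilon$ is a standard mollifier at scale $\epsilon$ and $\chi$ is a smooth cutoff with vertical support in $M''\times N$ equal to $1$ on a neighbourhood of $\mathrm{supp}(\Omega)$ (feasible for $\epsilon$ small, thanks to the geometric gap $M''\Subset M'\Subset M$). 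Standard smoothing estimates give $\|\Omega_\epsilon\|_{\Cc^{l'}}\lesssim\epsilon^{-(l'-l)}$ and $\|\Omega-\Omega_\epsilon\|_{\Cc^{l_0}}\lesssim\epsilon^{l-l_0}$; splitting
\[
|\langle \Phi,\Omega\rangle|\le \|\Phi\|_{-l'}\|\Omega_\epsilon\|_{\Cc^{l'}}+\|\Phi\|_{-l_0}\|\Omega-\Omega_\epsilon\|_{\Cc^{l_0}}
\]
and optimizing in $\epsilon$ will yield the desired interpolation inequality.

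Combined with the hypothesis and a uniform bound $\|\Phi\|_{-l_0}\lesssim_{\mathcal F}1$, this gives
\[
|\langle T^+,\Phi_1-\Phi_2\rangle|\lesssim \dist_{-l}(\Phi_1,\Phi_2)^\alpha\lesssim \dist_{-l'}(\Phi_1,\Phi_2)^{\alpha\theta},
\]
proving the lemma with $\alpha'=\alpha\theta$. The missing bound on $\|\Phi\|_{-l_0}$ follows from the structure of $\Psh_h$: by Theorem \ref{l:ddcpsi} each $\Phi_i$ decomposes on $M'\times N'$ as $\Psi_i+h_i$, with $\Psi_i$ a negative $L^1$-form of mass $\lesssim\|\ddc\Phi_i\|\lesssim\|\Phi_i\|_*$ and $h_i$ a horizontal $\ddc$-closed $(p,p)$-current whose slice mass is controlled uniformly on the family. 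The main technical obstacle is precisely this last step, i.e.\ upgrading the $\|\cdot\|_*$-bound (which directly controls only $\|\ddc\Phi\|$) to an $L^1$-type bound on $\Phi$ itself: the Theorem~\ref{l:ddcpsi} decomposition is not unique, and controlling the $\ddc$-closed part requires a careful fiber-integration argument on horizontal closed currents. Once this is in hand, the interpolation argument above is a routine application of mollification.
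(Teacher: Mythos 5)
Your argument follows the same route as the paper's proof, which simply invokes the Triebel interpolation inequality $\|\Phi\|_{-l}\le c_{l,l',\mathcal F}\,\|\Phi\|_{-l'}^{l/l'}$ on a $\|\cdot\|_*$-bounded family $\mathcal F$ and then passes to the distances; your mollification of the test forms is the standard proof of that inequality (the paper's exponent $l/l'$ corresponds to taking $l_0=0$, i.e.\ the mass norm, in your scheme), and the support-spreading caused by convolution is handled in the paper exactly as your cutoff does, namely by slightly enlarging $M''$, which is licit thanks to \eqref{e:hp-inclusion} and $f^*T^+=T^+$. Concerning the step you flag as the main obstacle: the uniform bound on $\|\Phi\|_{-l_0}$ over $\mathcal F$ is not proved in the paper either --- it is silently absorbed into the family-dependent constant $c_{l,l',\mathcal F}$. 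Be aware that this bound genuinely cannot be deduced from $\|\cdot\|_*$-boundedness alone (the family $\{n\Theta\}_{n\ge 1}$, with $\Theta$ a fixed positive closed horizontal current, satisfies $\|n\Theta\|_*=0$ but has unbounded mass and unbounded $\|\cdot\|_{-l_0}$), so no decomposition via Theorem \ref{l:ddcpsi} will produce it; the $dd^c$-closed part is simply not controlled by $\|\cdot\|_*$. The bound has to be read as part of what ``bounded family'' means in Definition \ref{d:holder-sp} (as in the DSH formalism, where the $L^1$/mass norm is included in the norm); note that without some such normalization the H\"older property itself would fail for trivial scaling reasons. Once that convention is granted, the low-order bound is an assumption rather than something to prove, and your proof is complete and coincides with the paper's.
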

\begin{proof}
By \eqref{e:hp-inclusion} and the fact that $f^* (T^+)=T^+$,
we are allowed to make small modifications to the sets $M'$ and $M''$ in Definition \ref{d:holder-sp}, and the fact that
$T^+$ has H\"older continuous 
{super-potentials}
(for some $l$ and $\alpha$)
does not depend on the choice
of $M'$ and $M''$ satisfying \eqref{e:hp-inclusion}.
For simplicity, we then
allow ourself to slightly modifying these domains 
in all the inequalities below. {For example, the norm in the third term of
\eqref{e:for-inv-norm} 
below needs to be taken with a choice of $M''$ which is slightly larger than that in the other two terms.}

 By interpolation \cite{Triebel}, for
 every
$0<l<l'<\infty$
and every bounded 
set $\mathcal F\subset \Psh_h (M'\times N')$ for $\|\cdot\|_{*}$ we have
\begin{equation}\label{e:for-inv-norm}
\|\Phi\|_{-l'}
\leq \|\Phi\|_{-l}
\leq c_{l,l',\mathcal F} \|\Phi\|_{-l'}^{l/l'}
\quad \mbox{ for every }
\Phi \in \mathcal F,
\end{equation}
where the constant $0<c_{l,l',\mathcal F}<\infty$ depends only on 
$l$, $l'$, 
and $\mathcal F$. In particular,
$\mathcal F$ is bounded for
$\|\cdot\|_{-l}$
if and only if it is bounded for
$\|\cdot\|_{-l'}$.
Moreover,
we have
\[
\dist_{-l'} (\Phi_1, \Phi_2)
\leq \dist_{-l} (\Phi_1, \Phi_2)
\leq c_{l,l',\mathcal F^\star} \dist_{-l'} (\Phi_1, \Phi_2)^{l/l'}
\quad \mbox{ for every } \Phi_1, \Phi_2 \in \mathcal F.
\]
The assertion follows.
\end{proof}

 Because of Lemma  \ref{l:interpolation:T+}, it is enough 
to prove that $T^+$ has $(2,\alpha)$-H\"older continuous
super-potentials, for some $0<\alpha<1$. 
Morever, we can just say that 
$T^+$
\emph{has H\"older continuous super-potentials}, with not reference to the specific $l$
 and $\alpha$.
This justifies the phrasing of Theorems \ref{t:super-Holder} and \ref{t:intro-all}.

\bigskip

We fix in the following a constant 
$1< \delta < d/ d^{+}_{p-1}$.
Consider $\Phi_1, \Phi_2\in \Psh_h(M'\times N')$
with 
{$\|\Phi_1
-
\Phi_2\|_{-2}\leq 1$}.
Observe that the currents $\ddc\Phi_i$ are positive and their masses are locally bounded, see Lemma \ref{l:l+2} (ii).
Setting $\lambda:= \|\Phi_1-\Phi_2\|_{-2}$, we need to show that
\begin{equation}\label{eq:holder-goal}
|\langle T^+, \Phi_1-\Phi_2\rangle| \lesssim \lambda^\alpha,\end{equation}
for some $\alpha>0$ and some
implicit constant both independent of $\Phi_1$ and $\Phi_2$.

\medskip

We can assume $\lambda\ll 1$.
By a similar argument as in
Lemma
\ref{l:l+2} (i),
we have
\begin{equation}\label{e:ddcPhilam}
\|\ddc\Phi_1-\ddc\Phi_2\|_{-4} \lesssim \lambda,
\end{equation}
where the implicit constant is independent of $\Phi_1, \Phi_2$.

\medskip

For every $n\in \mathbb N$ and $i \in \{1,2\}$, 
define $\Xi_{i,n}:=d^{-n} (f^n)_*(\ddc\Phi_i)$. These currents are well-defined and horizontal on $M\times N''$.
Since $d^+_{p-1}<d$, by the definition of $d^{+}_{p-1}$ and 
the choice of $\delta$
we have
\begin{equation}
\label{eq:mass-Xi}
\|\Xi_{i,n}\|_{M\times N''}\lesssim \delta^{-n}.
\end{equation}
Observe that the quantity 
in the 
left-hand side of the above
expression
is
well-defined
since $\Xi_{i,n}\geq 0$.

\begin{lemma}\label{l:XiA}
There exists two constants $A>1$ and $C>0$ independent of $\Phi_1$ and $\Phi_2$
such that
\[
\|\Xi_{1,n} -  \Xi_{2,n}\|_{-4}
\leq C A^{n}\lambda
\quad
\mbox{ for every } n\in \mathbb N\]
\end{lemma}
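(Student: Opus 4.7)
The plan is to test $\Xi_{1,n}-\Xi_{2,n}$ against an arbitrary $(p-1,p-1)$-form $\Omega$ with vertical support in $M''\times N$ and $\|\Omega\|_{\mathcal{C}^4}\le 1$, and to use the pushforward-pullback duality together with \eqref{e:ddcPhilam}. Concretely,
\[
\langle \Xi_{1,n}-\Xi_{2,n},\Omega\rangle = d^{-n}\langle dd^c(\Phi_1-\Phi_2),(f^n)^*\Omega\rangle,
\]
and the right-hand side is bounded by $d^{-n}\|dd^c(\Phi_1-\Phi_2)\|_{-4}\cdot\|(f^n)^*\Omega\|_{\mathcal{C}^4}\lesssim d^{-n}\lambda\cdot\|(f^n)^*\Omega\|_{\mathcal{C}^4}$, as soon as $(f^n)^*\Omega$ is a legitimate test form for the $-4$ semi-norm on horizontal $(p-1,p-1)$-currents, i.e.\ a $(p-1,p-1)$-form with vertical support in $M''\times N$. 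The problem therefore reduces to a support verification and an exponential $\mathcal{C}^4$ estimate.

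The support condition is immediate from the Hénon-like structure: iterating the inclusion $f^{-1}(D)\subset M''\times N$ given by \eqref{e:hp-inclusion}, one has $f^{-n}(D)\subset M''\times N$ for every $n\ge 1$, so $\supp\bigl((f^n)^*\Omega\bigr)\subset f^{-n}(\supp\Omega)\subset f^{-n}(D)\subset M''\times N$, which is vertical since its $\pi_M$-projection is compactly contained in $M''\Subset M$. The $\mathcal{C}^4$ estimate relies on the fact that $\overline\Gamma$ stays at positive distance from $\partial_v D\times\overline D$ and $\overline D\times\partial_h D$; consequently, on the compact set $\overline{f^{-n}(D)}\subset M''\times N$ the derivatives of $f$ up to order $4$ are uniformly bounded by a constant $M_0$ independent of $n$. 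Applying the chain rule (Faà di Bruno) to $f^n=f\circ\cdots\circ f$ yields $\|f^n\|_{\mathcal{C}^4}\lesssim A_0^n$ for some $A_0>1$ depending only on $M_0$, and then $\|(f^n)^*\Omega\|_{\mathcal{C}^4}\lesssim A_0^n\|\Omega\|_{\mathcal{C}^4}$ by the Leibniz rule applied to the pullback.

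Combining these two ingredients gives $|\langle \Xi_{1,n}-\Xi_{2,n},\Omega\rangle|\lesssim (A_0/d)^n\lambda$, so picking any $A>\max(A_0/d,1)$ yields the required bound $\|\Xi_{1,n}-\Xi_{2,n}\|_{-4}\le CA^n\lambda$ with $C$ independent of $\Phi_1,\Phi_2$. The main obstacle I expect is the bookkeeping behind the $\mathcal{C}^4$ estimate: one must ensure that all intermediate iterates $f^j(x)$, $0\le j\le n$, remain on a \emph{common} compact subset of $D_{v,1}\cap D_{h,1}$ on which $\|f\|_{\mathcal{C}^4}$ is uniformly bounded. This is guaranteed by the nested inclusions $f^{-n}(D)\subset M''\times N\Subset D$ and by the fact that the Hénon-like structure keeps both the image and the preimage of $D$ at positive distance from the critical boundary pieces.
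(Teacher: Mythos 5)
Your proof is correct and follows essentially the same route as the paper: test $\Xi_{1,n}-\Xi_{2,n}$ against a form $\Omega$ with $\|\Omega\|_{\mathcal C^4}\le 1$, move $(f^n)_*$ to a pullback, apply \eqref{e:ddcPhilam}, and use the exponential bound $\|(f^n)^*\Omega\|_{\mathcal C^4}\lesssim A^n\|\Omega\|_{\mathcal C^4}$ (which the paper simply asserts as \eqref{e:pullbackC4}, while you also sketch its justification via the support inclusions from \eqref{e:hp-inclusion} and uniform bounds on the derivatives of $f$ over the relevant compact sets). Your extra factor $d^{-n}$ only strengthens the stated bound, so the conclusion holds with any $A>\max(A_0/d,1)$ as you say.
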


Observe that, as $\Xi_{i,n}$ is horizontal on $M\times N''$, it is also horizontal on $M'\times N'$. Hence the quantity in the statement
is well-defined.

\begin{proof}
For every $n\in \mathbb N$ and
every smooth $(p-1,p-1)$-form
$\Omega$ with vertical support in $M'\times N$,
using \eqref{e:ddcPhilam} 
we have
\begin{align*}
    |\langle\Xi_{1,n} -\Xi_{2,n}, \Omega\rangle|
    & =    
    |\langle d^{-n} (f^n)_*(\ddc\Phi_1)-d^{-n} (f^n)_*(\ddc\Phi_2),\Omega\rangle|\\
    &= d^{-n}|\langle \ddc\Phi_1-\ddc\Phi_2, (f^n)^*(\Omega)\rangle| \\
    &\leq \|\ddc\Phi_1-\ddc\Phi_2\|_{-4}
    \cdot \|(f^n)^*(\Omega)\|_{\Cc^{4} }\\
    &\lesssim  \|(f^n)^*(\Omega)\|_{\Cc^{4} } \, \lam,
\end{align*}
where the implicit constant in the last inequality
is independent of $n,\Phi_1, \Phi_2$, and $\Omega$.
Since we have
\begin{equation}\label{e:pullbackC4}
\|(f^n)^*(\Omega)\|_{\Cc^{4}}\leq 
A^n \|\Omega\|_{\mathcal C^{4}}\end{equation}
for some positive constant $A$ 
independent of
$n$ and $\Omega$, the assertion follows.
\end{proof}

For $i\in \{1,2\}$,
let $\Phi_{i,n}$ be the negative solution of $\ddc \Phi_{i,n}=\Xi_{i,n}$ given by 
Theorem \ref{l:ddcpsi}. These currents are well-defined and horizontal on $M'\times N'$.
Define 
$$\Phi^\star:=\Phi_1-\Phi_2 \quad \text{and} \quad \Phi_n^\star:=\Phi_{1,n}-\Phi_{2,n}$$
and observe that $\Phi^\star\in \Dsh_h (M'\times N')$
and
$\Phi^\star_n\in \Dsh_h (M'\times N')$
for all $n\in \mathbb N$.

\begin{lemma}\label{l:PhiA}
 For every $i\in \{1,2\}$ and
$n\in \mathbb N$,
we have

\[
\|\Phi_n^\star\|_{-3}
=\|
\Phi_{1,n}-\Phi_{2,n}\|_{-3} \lesssim A^n\lambda
\quad
\mbox{ and } 
\quad
 \|\Phi_{i,n}\|_{M'\times N'}\lesssim \delta^{-n},
 \]
where $A>1$ is as in Lemma \ref{l:XiA}
and the implicit constants are independent of $\Phi_1$, $\Phi_2$, and $n$.
\end{lemma}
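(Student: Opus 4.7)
The proof splits into two essentially independent estimates, both of which flow directly from Theorem \ref{l:ddcpsi} combined with bounds established earlier. The second (mass) inequality is the easier one: since by construction $\Xi_{i,n}$ is horizontal, positive and closed on $M\times N''$, and $\Phi_{i,n}$ is the negative solution of $dd^c\Phi_{i,n}=\Xi_{i,n}$ produced by Theorem \ref{l:ddcpsi} with $\Omega=\Xi_{i,n}$, the first inequality in \eqref{eq:normineq} gives
\[
\|\Phi_{i,n}\|_{M'\times N'}\lesssim \|\Xi_{i,n}\|_{M\times N''}.
\]
The mass estimate \eqref{eq:mass-Xi} then yields $\|\Phi_{i,n}\|_{M'\times N'}\lesssim \delta^{-n}$, which is the desired bound.

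For the first (semi-norm) estimate, the natural idea is to apply the second inequality of \eqref{eq:normineq} with $l=3$ to the signed current $\Xi_{1,n}-\Xi_{2,n}$ and its $dd^c$-primitive $\Phi_{n}^\star=\Phi_{1,n}-\Phi_{2,n}$, then invoke Lemma \ref{l:XiA} to conclude
\[
\|\Phi_n^\star\|_{-3}\lesssim \|\Xi_{1,n}-\Xi_{2,n}\|_{-4}\lesssim A^n\lambda.
\]
This is where the single real obstacle sits: Theorem \ref{l:ddcpsi} is stated only for positive closed currents $\Omega$, while $\Xi_{1,n}-\Xi_{2,n}$ is only a difference of two such currents. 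I would circumvent this by observing that, as noted in the proof of Theorem \ref{l:ddcpsi} (``$\Psi$ is defined by means of explicit kernels''), the solution operator $\Omega\mapsto \Psi$ coming from \cite[Theorem 2.7]{DNS} is linear; moreover, the $\Cc^{-l}$ bound in \eqref{eq:normineq}, unlike the mass bound, is a pairing estimate with smooth test forms and therefore does not use positivity. Consequently the operator and its $\Cc^{-l}$ estimate extend by linearity to signed currents, so $\Phi_n^\star$ is precisely the image of $\Xi_{1,n}-\Xi_{2,n}$ under this kernel operator and the second line of \eqref{eq:normineq} applies to it.

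Putting the two pieces together delivers both inequalities. The hard part is purely notational bookkeeping: making the linearity argument explicit enough that one can legitimately invoke the $\Cc^{-l}$-bound of Theorem \ref{l:ddcpsi} for the signed difference. All the dynamical information (the factor $A^n$ from the growth of $\|(f^n)^*\Omega\|_{\Cc^4}$, the factor $\delta^{-n}$ from the dynamical-degree assumption $d_{p-1}^+<d$, and the factor $\lambda$ measuring the distance between $\Phi_1$ and $\Phi_2$) is already baked into Lemma \ref{l:XiA} and the estimate \eqref{eq:mass-Xi}, so nothing further needs to be proved here.
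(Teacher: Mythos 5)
Your proof is correct and follows exactly the paper's route: the mass bound comes from the first inequality of \eqref{eq:normineq} together with \eqref{eq:mass-Xi}, and the $\|\cdot\|_{-3}$ bound comes from the second inequality of \eqref{eq:normineq} together with Lemma \ref{l:XiA}. Your remark that the linearity of the kernel solution operator and the positivity-free nature of the $\mathcal C^{-l}$ estimate justify applying it to the signed difference $\Xi_{1,n}-\Xi_{2,n}$ makes explicit a point the paper leaves implicit, and is the right justification.
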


\begin{proof}
The first inequality follows from Theorem \ref{l:ddcpsi}
and Lemma \ref{l:XiA}.
The second inequality follows from Theorem
\ref{l:ddcpsi} and \eqref{eq:mass-Xi}.
\end{proof}

Define also
$$\Psi_0:=\Phi^\star-\Phi_0^\star \quad \text{and} \quad \Psi_n:=d^{-1}f_*(\Phi_{n-1}^\star)-\Phi_n^\star.$$
Observe that 
$\ddc\Psi_n=0$
for every $n\in\mathbb N$
and that, 
by 
the first inequality in Lemma \ref{l:PhiA},
we have
  \begin{equation}
  \label{eq:Psi-n-l}
  \|\Psi_n\|_{-3}\lesssim A^n\lambda.
  \end{equation}
Observe also that 
both $\Phi^\star_n$ and $\Psi_n$
are differences of positive currents, whose mass is bounded by (a constant times) $\delta^{-n}$ by the second inequality in 
Lemma \ref{l:PhiA}.

\begin{lemma}\label{l:telescope}
For every $n\in \mathbb N$, we have
\begin{equation}\label{e:telescope}
d^{-n} (f^n)_* (\Phi^\star) = \sum_{m=0}^n d^{-n+m} (f^{n-m})_* (\Psi_m) + \Phi^\star_n.
\end{equation}
\end{lemma}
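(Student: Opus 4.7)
The identity \eqref{e:telescope} is a telescoping sum, so the natural approach is induction on $n$. The idea is that $\Psi_m$ was precisely defined so as to record the ``defect'' introduced at step $m$ when one replaces $d^{-1}f_*(\Phi^\star_{m-1})$ by $\Phi^\star_m$ (and, for $m=0$, when one replaces $\Phi^\star$ by $\Phi^\star_0$). Pushing these defects forward by $d^{-1}f_*$ repeatedly and summing should reconstruct the iterated pushforward $d^{-n}(f^n)_*(\Phi^\star)$ up to the remaining term $\Phi^\star_n$.

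For the base case $n=0$, the identity reads $\Phi^\star = \Psi_0 + \Phi^\star_0$, which is just the definition $\Psi_0 := \Phi^\star - \Phi^\star_0$. For the inductive step, assume that \eqref{e:telescope} holds for $n-1$, namely
\[
d^{-(n-1)}(f^{n-1})_*(\Phi^\star) = \sum_{m=0}^{n-1} d^{-(n-1)+m}(f^{n-1-m})_*(\Psi_m) + \Phi^\star_{n-1}.
\]
Apply $d^{-1} f_*$ to both sides. Since $(f^{n-m-1})_*$ and $f_*$ compose to $(f^{n-m})_*$, the sum becomes $\sum_{m=0}^{n-1} d^{-n+m}(f^{n-m})_*(\Psi_m)$, while the last term becomes $d^{-1} f_*(\Phi^\star_{n-1})$. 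By the definition $\Psi_n = d^{-1} f_*(\Phi^\star_{n-1}) - \Phi^\star_n$, we can rewrite this as $\Psi_n + \Phi^\star_n$, and $\Psi_n$ is exactly the $m=n$ term $d^{-n+n}(f^{n-n})_*(\Psi_n)$ of the full sum. Collecting everything yields \eqref{e:telescope}.

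The only points requiring care are that the various pushforwards are well-defined: $\Phi^\star$ and each $\Phi^\star_m$ are differences of currents which are horizontal in $M'\times N'$, so by the geometric assumption \eqref{e:hp-inclusion} the action of $f_*$ (and its iterates) on them is well-defined as an operation producing currents horizontal in $M\times N''\subset M'\times N'$. I do not expect any real obstacle here; the statement is a bookkeeping identity following directly from the definitions of $\Psi_m$ and $\Phi^\star_m$, and the induction goes through with no analytic input beyond the fact that $f_*$ commutes with the linear operations involved.
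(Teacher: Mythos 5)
Your proof is correct and is essentially the same argument as the paper's: the paper simply unrolls your induction into a direct telescoping computation, substituting the definition of $\Psi_m$ into the sum and cancelling consecutive terms. Both rest on exactly the same bookkeeping identity $d^{-1}f_*(\Phi^\star_{m-1})=\Psi_m+\Phi^\star_m$ (and $\Phi^\star=\Psi_0+\Phi^\star_0$), so there is nothing substantive to distinguish the two.
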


\begin{proof}
By the definition of $\Psi_m$,
for every $n\in \mathbb N$
the right hand side of \eqref{e:telescope}
is equal to
\[
\begin{aligned}
& 
d^{-n} (f^n)_* (\Phi^\star - \Phi^\star_0)
+\sum_{m=1}^n d^{-n+m} (f^{n-m})_* 
\big(d^{-1} f_* (\Phi^\star_{m-1}) - \Phi^\star_m  \big)
+ \Phi^\star_n\\
 & =
 d^{-n} (f^n)_* (\Phi^\star 
 -\Phi^\star_0)
 +
\sum_{m=1}^n d^{-n+m {  -} 1} (f^{n-m {  +} 1})_* 
(\Phi^\star_{m-1})
-
\sum_{m=1}^n d^{-n+m} (f^{n-m})_* 
(\Phi^\star_{m})
+ \Phi^\star_n\\
 & =  
 d^{-n} (f^n)_* (\Phi^\star) 
 -
 d^{-n} (f^n)_* (\Phi^\star_0)
+
d^{-n} (f^n)_* (\Phi^\star_0)
- \Phi^\star_n  + \Phi^\star_n \\
& =
d^{-n} (f^n)_* (\Phi^\star).
\end{aligned}\]
The assertion follows.
\end{proof}

Choose a smooth positive  $(p,p)$-form $\Theta$
with vertical support in $M''\times N$ and of slice mass $\|\Theta\|_v=1$.
We have that $d^{-n} (f^n)^*(\Theta)$ converges to $T^+$. Equivalently, we have
$$T^+=\Theta+\sum_{n\geq 0} d^{-n} (f^n)^* (\Theta'), \quad \text{where} \quad \Theta':=d^{-1} f^*(\Theta)-\Theta.$$
Hence, 
$$\langle T^+,\Phi^\star\rangle = \langle \Theta,\Phi^\star\rangle + \sum_{n\geq 0} \langle d^{-n} (f^n)^* (\Theta'),\Phi^\star\rangle.$$
By Lemma  \ref{l:telescope}, for every $n\in \mathbb N$ we have
$$\langle d^{-n} (f^n)^* (\Theta'),\Phi^\star\rangle = \sum_{m=0}^{n} \langle d^{-n+m} (f^{n-m})^*(\Theta'),\Psi_m\rangle + \langle \Theta', \Phi_n^\star\rangle.$$
It follows that
\begin{equation}\label{e:est-sum-before-cut}
\begin{aligned}
\langle T^+,\Phi^\star\rangle 
&
= \langle \Theta,\Phi^\star\rangle + \sum_{n\geq 0} \langle \Theta', \Phi_n^\star\rangle + \sum_{m\geq 0} \sum_{n\geq m} \langle  d^{-n+m} (f^{n-m})^*(\Theta'), \Psi_m\rangle
\\
& = \langle \Theta,\Phi^\star\rangle + \sum_{n\geq 0} \langle \Theta', \Phi_n^\star\rangle + \sum_{m\geq 0} \langle T^+-\Theta, \Psi_m\rangle.
\end{aligned}\end{equation}

Theorem \ref{t:super-Holder}
will follow from the next two lemmas, which also guarantee that the above series converge absolutely.

\begin{lemma}\label{l:est-Theta'}
There exists a positive constant $C_1$ such that
for every $n\in \mathbb N$ we have
\[
|\langle\Theta', \Phi_n^\star\rangle|
\leq C_1
\min (A^n\lambda, \delta^{-n}).
\]
\end{lemma}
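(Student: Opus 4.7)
The plan is to prove the two bounds $|\langle\Theta',\Phi_n^\star\rangle|\lesssim A^n\lambda$ and $|\langle\Theta',\Phi_n^\star\rangle|\lesssim \delta^{-n}$ separately, using respectively the two estimates provided by Lemma \ref{l:PhiA}, and then to combine them by taking the minimum. The key preliminary observation is that $\Theta'$ is a fixed smooth $(p,p)$-form whose $\mathcal C^3$ and $L^\infty$ norms are absolute constants independent of $n$, $\Phi_1$, and $\Phi_2$, and whose support sits nicely in the domain where the relevant pairings are defined.

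First I would verify that $\Theta'=d^{-1}f^*(\Theta)-\Theta$ is smooth and has vertical support in $M''\times N$. The form $\Theta$ is smooth with vertical support in $M''\times N$ by construction. Since $f$ is a H\'enon-like map satisfying the inclusion $f^{-1}(D)\subset M''\times N$ from \eqref{e:hp-inclusion}, and since $f$ is holomorphic with finite fibers on its domain, the pullback $f^*(\Theta)$ is smooth and its support is contained in $f^{-1}(\supp\Theta)\subset f^{-1}(D)\subset M''\times N$, so it is again vertical in $M''\times N$. Therefore $\Theta'$ is a smooth $(p,p)$-form with vertical support in $M''\times N$, and both $\|\Theta'\|_{\Cc^3}$ and $\|\Theta'\|_{L^\infty}$ are finite constants that depend only on $\Theta$ and $f$.

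For the first bound, since $\Phi_n^\star\in\Dsh_h(M'\times N')$ is a difference of horizontal currents of bidimension $(p,p)$, and $\Theta'$ is a smooth $(p,p)$-form with vertical support in $M''\times N$, the pairing $\langle\Theta',\Phi_n^\star\rangle$ falls exactly within the class tested by the seminorm $\|\cdot\|_{-3}$. By the very definition of this seminorm and the first inequality in Lemma \ref{l:PhiA},
\[
|\langle\Theta',\Phi_n^\star\rangle|\le \|\Theta'\|_{\Cc^3}\cdot \|\Phi_n^\star\|_{-3}\lesssim A^n\lambda.
\]

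For the second bound, each $\Phi_{i,n}$ is a negative $L^1$ form with mass $\|\Phi_{i,n}\|_{M'\times N'}\lesssim \delta^{-n}$ by the second inequality in Lemma \ref{l:PhiA}. Since $\Theta'$ is a bounded smooth form and vertical in $M''\times N$, the pairing is controlled by the sup norm of $\Theta'$ times the mass of $\Phi_{i,n}$, giving
\[
|\langle\Theta',\Phi_{i,n}\rangle|\lesssim \|\Theta'\|_{L^\infty}\cdot \|\Phi_{i,n}\|_{M'\times N'}\lesssim \delta^{-n},
\]
and the same bound for $\Phi_n^\star=\Phi_{1,n}-\Phi_{2,n}$ follows from the triangle inequality. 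Combining the two estimates yields the desired $\min(A^n\lambda,\delta^{-n})$, with a constant $C_1$ depending only on $\Theta$ and $f$. I do not expect any real obstacle here: the content of the lemma is a direct, almost bookkeeping-level consequence of Lemma \ref{l:PhiA} once one records the regularity and support of $\Theta'$.
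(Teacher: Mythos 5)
Your proof is correct and follows essentially the same route as the paper: the bound $A^n\lambda$ comes from pairing the smooth form $\Theta'$ (as a $\mathcal C^3$-bounded test form with vertical support) against $\|\Phi_n^\star\|_{-3}\lesssim A^n\lambda$ from Lemma \ref{l:PhiA}, and the bound $\delta^{-n}$ comes from the mass estimate $\|\Phi_{i,n}\|_{M'\times N'}\lesssim\delta^{-n}$ paired against the boundedness of $\Theta'$. The paper phrases the second bound via $\Theta'$ being a difference of two positive vertical currents of slice mass $1$ rather than via $\|\Theta'\|_{L^\infty}$, but this is the same estimate.
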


\begin{proof}
Recall that
$\Theta'=d^{-1}f^* (\Theta)-\Theta$,
where both $\Theta$ and $d^{-1}f^* (\Theta)$
are positive, 
vertical,
and of slice mass $1$. 
Recall also
that, for every $n\in \mathbb N$,
$\Phi^\star_n$ is the difference of
two positive currents 
whose mass is less than or equal to a constant times $\delta^{-n}$. This shows the inequality
$|\langle\Theta', \Phi_n^\star\rangle|
\leq C_1
\delta^{-n}$ for some positive constant $C_1$ independent of $n$.

On the other hand, since $\Theta$ is smooth, the same is true for $\Theta'$ and we have $\|\Theta'\|_{  3}\lesssim 1$.
By 
Lemma \ref{l:PhiA},
we deduce that 
\[|\langle\Theta',\Phi^\star_n\rangle|\leq 
 \|\Phi^\star_n\|_{  -3} \cdot \|\Theta'\|_{  3}\lesssim A^m \lambda.\]
 The assertion follows by possibly increasing $C_1$.
\end{proof}

\begin{lemma}\label{l:est-T+-Theta}
There exists a positive constant $C_2$ such that
for every $m\in \mathbb N$ we have
\[
|\langle T^+ - \Theta, \Psi_m\rangle|
\leq C_2
\min (A^m\lambda, \delta^{-m}).
\]
\end{lemma}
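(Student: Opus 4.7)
Following the two-step pattern of Lemma \ref{l:est-Theta'}, I would establish the bounds $|\langle T^+-\Theta,\Psi_m\rangle|\lesssim\delta^{-m}$ and $|\langle T^+-\Theta,\Psi_m\rangle|\lesssim A^m\lambda$ separately and take the minimum. The new difficulty, compared to the previous lemma, is that $T^+-\Theta$ replaces the smooth form $\Theta'$.

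For the $\delta^{-m}$ bound, the definition $\Psi_m=d^{-1}f_*(\Phi^\star_{m-1})-\Phi^\star_m$ exhibits $\Psi_m$ as a signed combination of the four currents $d^{-1}f_*(\Phi_{i,m-1})$ and $\Phi_{i,m}$ for $i\in\{1,2\}$, each lying in $\Psh_h(M'\times N')$ with mass $\lesssim\delta^{-m}$ by Lemma \ref{l:PhiA} and with $\ddc$ of mass $\lesssim\delta^{-m}$ by \eqref{eq:mass-Xi}. The result of \cite{DNS} that $T^+$ has continuous super-potentials implies in particular that $\Phi\mapsto\langle T^+,\Phi\rangle$ is a bounded linear functional on families of elements of $\Psh_h(M'\times N')$ bounded in $\|\cdot\|_*$; applied to the four summands of $\Psi_m$ (rescaled by $\delta^m$), this yields $|\langle T^+,\Psi_m\rangle|\lesssim\delta^{-m}$. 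The bound $|\langle\Theta,\Psi_m\rangle|\lesssim\delta^{-m}$ is immediate from the mass bound and the smoothness of $\Theta$.

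For the $A^m\lambda$ bound, the key ingredient is $\|\Psi_m\|_{-3}\lesssim A^m\lambda$, see \eqref{eq:Psi-n-l}. Since $T^+-\Theta$ is not smooth, I cannot pair directly as in Lemma \ref{l:est-Theta'}. Instead, setting $\Theta_N:=d^{-N}(f^N)^*\Theta$ and using the invariance $(f^N)^*T^+=d^N T^+$ to obtain $T^+-\Theta_N=d^{-N}(f^N)^*(T^+-\Theta)$, I would split
\[\langle T^+-\Theta,\Psi_m\rangle=\langle\Theta_N-\Theta,\Psi_m\rangle+d^{-N}\langle T^+-\Theta,(f^N)_*\Psi_m\rangle.\]
The first summand is controlled by $\|\Theta_N-\Theta\|_{\mathcal C^3}\cdot\|\Psi_m\|_{-3}$, with \eqref{e:pullbackC4} controlling $\|\Theta_N-\Theta\|_{\mathcal C^3}$. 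The second summand is bounded by applying the super-potential bound from the first step to $(f^N)_*\Psi_m$: its constituent currents $(f^N)_*(d^{-1}f_*\Phi_{i,m-1})$ and $(f^N)_*\Phi_{i,m}$ lie in $\Psh_h(M'\times N')$ with $\|\cdot\|_*$ controlled by $(d/\delta)^N\delta^{-m}$, as follows from the identity $\ddc (f^N)_*\Phi_{i,m}=d^N\Xi_{i,m+N}$ combined with the mass bound \eqref{eq:mass-Xi}. Choosing $N$ suitably in terms of $m$ produces the claimed bound.

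The main obstacle is the precise calibration of the truncation level $N$: the smooth-side error behaves like $(A/d)^N A^m\lambda$ while the tail behaves like $\delta^{-N-m}$, and both must be absorbed simultaneously into $A^m\lambda$. The cancellation $\ddc\Psi_m=0$ is essential throughout, because it ensures that $(f^N)_*\Psi_m$ remains $\ddc$-closed and that its constituents inherit a $\|\cdot\|_*$-bound driven by the dynamical degree $d$ rather than by the pullback-induced factor $A^N$; without this, the only available estimate on $T^+$ (a priori merely continuous, not H\"older, which is what we are trying to prove) would be too weak to close the argument.
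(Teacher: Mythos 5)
Your first bound, $|\langle T^+-\Theta,\Psi_m\rangle|\lesssim\delta^{-m}$, is essentially the paper's: it rests on the mass bound $\lesssim\delta^{-m}$ for the positive constituents of $\Psi_m$ together with the continuity of the super-potential of $T^+$ from \cite{DNS}, and it is fine. The gap is in the second bound. Your truncation $T^+=\Theta_N+(T^+-\Theta_N)$ produces two error terms that you name but never reconcile, and they cannot be reconciled. The smooth term is controlled only by $\|\Theta_N-\Theta\|_{\mathcal C^3}\cdot\|\Psi_m\|_{-3}\lesssim (A/d)^N A^m\lambda$: there is no cancellation available in $\Theta_N-\Theta$ in the $\mathcal C^3$ norm, since $d^{-N}(f^N)^*\Theta$ converges to the (non-smooth) current $T^+$ only weakly, so its $\mathcal C^3$ norm genuinely grows like $(A/d)^N$. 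The tail term forces $\delta^{-N-m}\lesssim A^m\lambda$, which already for $m=0$ requires $N\gtrsim|\log\lambda|/\log\delta$. Since $A$ is a $\mathcal C^4$-type operator norm of $f^*$ as in \eqref{e:pullbackC4} and nothing in the setting gives $A\le d$ (typically $A\gg d$), the factor $(A/d)^N=\lambda^{-\log(A/d)/\log\delta}$ destroys the estimate as $\lambda\to 0$. So the "suitable choice of $N$" you defer to does not exist, and the calibration you flag as "the main obstacle" is a genuine obstruction, not a technicality to be filled in.

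The paper avoids the truncation of $T^+$ altogether, and this is the idea you are missing. Since $\ddc\Psi_m=0$, one may write $\Psi_m=\Psi_m-0$ with both terms in $\Psh_h(M'\times N')$, so $\|\Psi_m\|_*=0$; hence the whole family $\{\Psi_m\}$, after either normalization ($\delta^m\Psi_m$ or $A^{-m}\lambda^{-1}\Psi_m$), lies in a single $\|\cdot\|_*$-bounded set of $\ddc$-closed horizontal currents, each a difference of two positive currents of controlled mass, and this set is compact, convex and symmetric. The continuity of the super-potential of $T^+$ established in \cite{DNS} — exactly the a priori input available before H\"older continuity is proved — makes $\Psi\mapsto\langle T^+-\Theta,\Psi\rangle$ a linear functional on this set, continuous for $\dist_{-3}$. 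Boundedness on the compact set gives $|\langle T^+-\Theta,\Psi_m\rangle|\lesssim\delta^{-m}$, and continuity at $0$ combined with the homogeneity of the functional gives $|\langle T^+-\Theta,\Psi_m\rangle|\lesssim\|\Psi_m\|_{-3}\lesssim A^m\lambda$ by \eqref{eq:Psi-n-l}. In other words, the correct exploitation of $\ddc\Psi_m=0$ is not that $(f^N)_*\Psi_m$ stays $\ddc$-closed inside your truncation scheme, but that no truncation is needed: the functional defined by $T^+$ is already continuous on the relevant compact family, and the two norms you have on $\Psi_m$ feed directly into the two halves of the minimum.
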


\begin{proof}
Recall that $dd^c \Psi_m=0$ for all $m\in \mathbb N$, and that $\Psi_m$ is the difference of two positive currents whose mass is smaller than a constant times $\delta^{-m}$. 
By \eqref{eq:Psi-n-l},
we also have
$\|\Psi_m\|_{  -3}\lesssim A^m \lambda$. As a consequence, both the sequences $\{\delta^{m} \Psi_m\}_{m\in \mathbb N}$
and $\{A^{-m}\lambda^{-1} \Psi_m\}_{m\in \mathbb N}$
belong to a compact subset
of the space of $dd^c$-closed horizontal currents of
bidimension $(p,p)$. As the current $T^+ - \Theta$ is independent of $m$ 
{ and $T^+$ has continuous super-potentials \cite{DNS}}, the assertion follows.
\end{proof}

\begin{proof}[End of the proof of Theorem \ref{t:super-Holder}]
We continue
to use the notations introduced above. Recall that, by Lemma 
\ref{l:interpolation:T+}, we only need to show 
that $T^+$ has $(2,\alpha)$-H\"older continuous
superpotential, i.e., we need to prove \eqref{eq:holder-goal}.

\medskip

It follows from \eqref{e:est-sum-before-cut} and Lemmas \ref{l:est-Theta'} and \ref{l:est-T+-Theta}
 that, 
 for any $N\in \mathbb N$, 
we have
\begin{eqnarray*}
|\langle T^+,\Phi^{\star}\rangle| &\leq&  |\langle \Theta,\Phi^{\star}\rangle| + \sum_{n\leq N} |\langle \Theta', \Phi_n^\star\rangle| + \sum_{n>N} |\langle \Theta', \Phi_n^\star\rangle| + \\
&& +\sum_{m\leq N} |\langle T^+-\Theta, \Psi_m\rangle|+  \sum_{m>N} |\langle T^+-\Theta, \Psi_m\rangle| \\
&\lesssim& \lambda +  \sum_{n\leq N} A^n\lambda + \sum_{n>N} \delta^{-n} + \sum_{n\leq N} A^n\lambda + \sum_{n>N} \delta^{-n}\\
& \lesssim & A^N\lambda +\delta^{-N},
\end{eqnarray*}
where the implicit constants are independent of $N$. 
Choosing 
\[N:= 
\left \lfloor
\frac{ |\log \lambda|}{\log (A\delta)}
\right\rfloor,\] 
we obtain
$$|\langle T^+,\Phi\rangle| \lesssim e^{\log A \cdot |\log \lambda| / \log (A\delta)} \lambda
=
\lambda^{\log \delta / \log (A\delta)}.$$
Recalling that $\delta>1$ is any constant smaller than $d/d^+_{p-1}>1$, this shows that
\[|\langle T^+,\Phi\rangle| \lesssim \lambda^{\alpha}
\quad \mbox{ for every } \quad 
\alpha<\frac{\log d - \log d^+_{p-1}}{\log d - \log d^+_{p-1} + \log A} .\] The assertion follows.
\end{proof}

\endproof

\section{Laminarity of the Green currents}

In this section we complete the proof of
Theorem \ref{t:intro-all}.
We first present some preliminary results on woven currents and shadows of currents in Sections \ref{ss:woven} and \ref{ss:shadows}, respectively. 
These first two sections do not have dynamical content.
We then complete the proof of Theorem \ref{t:intro-all} in Sections \ref{ss:proof-up-to-prop} and \ref{ss:proof-prop-In}.

\subsection{Woven currents}
\label{ss:woven}

Let $D\subset \mathbb{C}^k$ be a bounded convex open set and $\omega$ be the standard K\"{a}hler form of $\mathbb{C}^k$.  For 
every integer $1\le {q}\le k-1$, 
we denote by
$\mathcal{B}_{q}$
the set of 
{pure}
${q}$-dimensional,
not necessarily 
{connected nor} closed,
complex manifolds
$Z\subset D$ 
such that for any compact subset $K\subset D$
we have
$$\int_Z \omega^{q}|_K<\infty.$$
Thanks to Wirtinger's theorem, 
the last condition means that the $2{q}$-dimensional volume of $Z$ (counted with multiplicity) is locally finite. Hence, the current of integration $[Z]$ is a well-defined current  of bidimension $({q},{q})$ on $D$, and in particular we have
\[
\langle[Z],\omega^{q}|_K\rangle
=\int_Z \omega^{q}|_K <\infty
\] 
for every 
compact subset $K\subset D$.
We 
say that a
measure $\nu$ on $\mathcal{B}_{q}$ is $\mathcal{B}_{q}$-\textit{finite} if  
$$\int_{Z\in\mathcal{B}_{q}}\langle[Z],\omega^{q}|_K\rangle d\nu(Z)<\infty $$
for any  compact subset $K\subset D$. 

\begin{definition}
[\cite{BLS93,Dinh05suites}]
\label{d:woven}
    Let $S$ be a positive closed current of bidimension $({q},{q})$ on $D$. We say that $S$ is \textit{woven} if there exists a 
    $\mathcal{B}_{q}$-finite measure $\nu$ on $\mathcal{B}_{q}$ such that 
    \begin{equation}\label{eq:woven}
        S=\int_{Z\in\mathcal{B}_{q}}[Z]d\nu(Z).
    \end{equation}
    We say that $S$ is \textit{laminar} if, in addition to \eqref{eq:woven}, for  $(\nu\otimes \nu)$-almost  all $(Z,Z') \in \mathcal{B}_{q}^2$
    the set $Z\cap Z'$  is
    {open
    (possibly empty)
    in $Z$ and $Z'$.} 
   \end{definition}

In order to show that the Green currents are woven, we will
use the a criterion due to 
 de Thélin  \cite{dTh04}
 (see also {\cite{Dinh05suites,Dujardin03}} for a {global} version),
 that we now recall.
Let $\mathbb{G}({q},k)$ be the Grassmannian parametrizing
the 
 linear subspaces of dimension $q$ of
$\mathbb C^k$.
 Recall that $\mathbb G ({q},k)$ is a 
projective variety, 
of dimension $q(k-q)$,
and 
 that
it can naturally be seen as a submanifold of 
 the projective space
$\mathbb P (\bigwedge^q \C^k) \sim \mathbb P^{L({q},k)}$
 for some integer $L(q,k)$, where  
 the tangent space at each point of $\C^k$ is identified to $\C^k$.
We will denote by
$\omega_{\G}$ the K{\"a}hler form on $\G ({q},k)$ induced by the natural Fubini-Study form on $\P^{L({q},k)}$. 
We will let $\pi_D$ and $\pi_\mathbb{G}$
(resp. $\pi_D$ and $\pi_{\mathbb P^{L}}$)
be the natural projections of $D\times  \mathbb{G}({q},k)$
(resp. $D\times  \mathbb{P}^{L({q},k)}$)
to its first and the second factors, respectively.
From now on, to emphasize the factor
 $D$, 
we will use the notation $\omega_D$ instead of $\omega$
to denote
the standard
K\"{a}hler form on $D\subset \mathbb{C}^k$. 
Observe that
$(\pi_{\mathbb{G}})^*\omega_{\G}+ (\pi_D)^*\omega_D$ 
is 
a K\"{a}hler form on $D\times  \mathbb{G}({q},k)$.
 Volumes in $D\times \G ({q},k)$
will be computed with respect to this K{\"a}hler form.

For any smooth
complex ${q}$-dimensional submanifold $\Sigma \subset D$, we define 
 a
${q}$-dimensional submanifold 
$\hat \Sigma$
of $D\times \mathbb{G}({q},k)$ as
\begin{equation}\label{eq:hatsigma}
   \hat{\Sigma}
   :=
   \{(z,H)\in D\times \mathbb{G}({q},k): 
   z\in  \Sigma; H \mbox{ is parallel to } T_z \Sigma\},
\end{equation}
where $T_z\Sigma$ is the tangent space of $\Sigma$ at $z$. We can extend the above definition to the case of $\Sigma$ 
not smooth
by defining $\hat \Sigma$ 
as the closure of $\hat \Sigma_r$, where 
$\Sigma_r$
is the regular part of $\Sigma$. We will call
$\hat \Sigma$ 
the \emph{lift}
of $\Sigma$ to $D\times \mathbb{G}({q},k)$. 
The role of $\hat \Sigma$ in the theory
was observed in \cite{Dinh05suites}.

\begin{theorem}[{de Thélin \cite{dTh}}]\label{th:dTh}
    Let $\Sigma_n\subset D$ be a sequence of submanifolds of pure dimension $1\leq {q}\leq k-1$ and $ \hat{\Sigma}_n$ be the lift of $\Sigma_n$ to $D\times \mathbb{G}({q},k)$. 
 Let $v_n$ and $\hat{v}_n$ be the volumes of $\Sigma_n$ and $\hat{\Sigma}_n$, respectively. Assume  that all
 $v_n$ and $\hat{v}_n$ are
 finite and that the sequence $v_n^{-1}[\Sigma_n]$ converge to a current $T$ on $D$.
 If $\hat{v}_n=O(v_n)$ as $n\to \infty$, 
then the current $T$ is woven.
\end{theorem}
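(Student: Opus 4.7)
The strategy is to lift everything to $D\times \G(q,k)$, pass to a weak limit, and read the woven decomposition of $T$ off of the structure of that limit.

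First, set $\hat S_n:=v_n^{-1}[\hat\Sigma_n]$ and $S_n:=v_n^{-1}[\Sigma_n]$. The hypothesis $\hat v_n=O(v_n)$ says exactly that the masses $\|\hat S_n\|$ on $D\times\G(q,k)$ are uniformly bounded when the ambient K\"ahler form is taken to be $(\pi_\G)^*\omega_\G+(\pi_D)^*\omega_D$ (expanding the $q$-th power of this form produces terms each controlled by $\hat v_n$ or by $v_n$). Extract a subsequence along which $\hat S_n\to \hat T$ weakly, where $\hat T$ is a positive closed current of bidimension $(q,q)$ on $D\times\G(q,k)$. Since $(\pi_D)_*\hat S_n=S_n\to T$, passing the pushforward to the limit gives $(\pi_D)_*\hat T=T$.

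Second, exploit the tautological structure of the lifts. At every smooth point $(z,H)\in\hat\Sigma_n$ the differential $d\pi_D$ sends $T_{(z,H)}\hat\Sigma_n$ isomorphically onto $H\subset T_zD$, so each $\hat S_n$ is directed by the tautological $q$-plane distribution on $D\times\G(q,k)$. This geometric constraint is preserved under weak convergence, so $\hat T$ is carried by pairs $(z,H)$ for which $H$ is in the relevant sense a tangent direction at $z$. In particular, wedging $\hat T$ against a smooth $(k-q,k-q)$-form pulled back from $\G(q,k)$ produces a well-defined positive measure on $\G(q,k)$, and $\hat T$ admits meaningful slices by $\pi_\G$.

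Third, build the woven decomposition locally and then glue. Cover $D$ by a locally finite family of small convex open sets $U_\alpha$ and cover $\G(q,k)$ by coordinate patches $W_\alpha$ around reference planes $H_\alpha$ chosen so that, for every $H\in W_\alpha$, any connected piece of a $q$-dimensional analytic subset of $U_\alpha$ with tangent planes lying in $W_\alpha$ is automatically the graph of a holomorphic map from an open subset of $H_\alpha$ to $H_\alpha^\perp$. Restrict $\hat T$ to $U_\alpha\times W_\alpha$ and slice by $\pi_\G$; by Bishop's compactness theorem applied to the approximating graphs and the uniform volume bound, almost every slice projects to $U_\alpha$ as an integer-multiplicity current of integration $[Z]$ with $Z\in\Bc_q$. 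Disintegrating the finite measure $(\pi_\G)_*(\hat T|_{U_\alpha\times W_\alpha})$ produces a $\Bc_q$-finite measure $\nu_\alpha$ supported on graphs inside $U_\alpha$, and assembling the $\nu_\alpha$ via a partition of unity on $\G(q,k)$ subordinate to $\{W_\alpha\}$ yields the required global measure $\nu$ and the identity \eqref{eq:woven}.

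\textbf{Main obstacle.} The delicate step is the third one: one must guarantee that the slices of $\hat T$ by $\pi_\G$ really are currents of integration on genuine analytic subvarieties, rather than arbitrary positive currents. This rests on Bishop's compactness theorem, on the observation that for $n$ large each piece of $\Sigma_n$ in $U_\alpha$ with tangent plane in $W_\alpha$ really is a holomorphic graph over $H_\alpha$, and on a careful disintegration of $\hat T$ along $\pi_\G$ that preserves the integer multiplicity of the leaves. The fact that $\hat T$ is directed by the tautological distribution is exactly what forces its $\pi_\G$-slices to concentrate on such graphs in the limit.
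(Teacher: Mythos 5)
Note first that the paper does not prove Theorem \ref{th:dTh}: it is quoted from de Thélin \cite{dTh}, so your proposal has to be measured against his argument. Your first two steps are sound: the hypothesis $\hat v_n=O(v_n)$ does give uniform mass bounds for $\hat S_n=v_n^{-1}[\hat\Sigma_n]$ with respect to $(\pi_\G)^*\omega_\G+(\pi_D)^*\omega_D$, a subsequential weak limit $\hat T$ exists with $(\pi_D)_*\hat T=T$, and directedness by the tautological distribution passes to the limit. The gap is entirely in your third step, and it is the heart of the theorem. First, a dimensional problem: $\hat T$ has bidimension $(q,q)$ while $\G(q,k)$ has dimension $q(k-q)$, so slicing $\hat T$ by $\pi_\G$ is meaningless unless $q\ge q(k-q)$, i.e.\ $q=k-1$; likewise $\hat T\wedge(\pi_\G)^*\beta$ for a $(k-q,k-q)$-form $\beta$ has bidimension $(2q-k,2q-k)$ and is a measure only when $k=2q$. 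The object you want to disintegrate does not exist in general.

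Second, and more seriously, the assertion that ``almost every slice projects to $U_\alpha$ as an integer-multiplicity current of integration $[Z]$'' is precisely the conclusion to be proved, and Bishop's compactness does not deliver it: since $v_n\to\infty$, the limit $T$ is not a limit of finitely many analytic sets of bounded volume but an average over a number of pieces growing like $v_n$, and a weak limit of such averages is in general an arbitrary positive closed directed current, not an integral of integration currents. What de Thélin actually does is quantitative: he subdivides $D$ into cubes of size $r$ and $\G(q,k)$ into finitely many charts, decomposes each $\Sigma_n$ into components that are bounded-geometry graphs over the corresponding reference $q$-planes (``good'' components) plus a remainder, and uses the hypothesis $\hat v_n=O(v_n)$ — which bounds the total variation of the tangent planes — to show that the bad components carry a proportion of volume that is $O(r^2)+o(1)$ uniformly in $n$. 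Averaging the good graphs and applying compactness for families of graphs with uniformly bounded volume produces, for each $r$, a woven current $T_r\le T$ with $\|T-T_r\|\to 0$ as $r\to 0$, and the increasing limit of the $T_r$ is woven. This counting estimate, converting the lifted volume bound into a bound on the non-graph part, is absent from your proposal, and without it the passage from ``directed limit current'' to ``integral of currents of integration'' does not go through.
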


 The following criterion, see for instance
\cite{dThD},
will imply the laminarity of the Green current $T^-$ when $p=k-1$ in Theorem \ref{t:intro-all}.

\begin{proposition}\label{p:crit-lam}
    Let $T$ be a woven positive closed  $(1,1)$-current on $D$. Assume the local potentials of $T$ are integrable with respect to $T$. If  $T\wedge T=0$, then $T$ is laminar.
\end{proposition}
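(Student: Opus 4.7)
I would follow the classical Bedford--Lyubich--Smillie/Bedford--Taylor scheme, adapted to the woven setting by de Th\'elin and Dujardin: compute $T\wedge T$ in two ways, once analytically via Bedford--Taylor using a local psh potential of $T$, and once geometrically via the weaving $T=\int_{\mathcal B_1}[Z]\,d\nu(Z)$, so that transverse intersections between the leaves $Z$ appear as positive contributions. The hypothesis $T\wedge T=0$ will then force such transverse intersections to be $\nu\otimes\nu$-negligible, which is exactly laminarity in the sense of Definition \ref{d:woven}.

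\textbf{Main steps.} Let $u$ be a local psh potential of $T$, which is $T$-integrable by hypothesis. Bedford--Taylor theory gives $T\wedge T=\ddc(uT)$, and decomposing $T$ via the weaving while swapping $\ddc$ with the integration against $\nu$ produces
\[
T\wedge T \;=\; \int_{\mathcal B_1}\ddc\bigl(u\,[Z]\bigr)\,d\nu(Z)\;=\;\int_{\mathcal B_1}[Z]\wedge T\,d\nu(Z),
\]
where $[Z]\wedge T=\ddc(u|_Z)$ is the positive measure on the Riemann surface $Z$ obtained by slicing. From $T\wedge T=0$ and the nonnegativity of each $[Z]\wedge T$, I conclude $[Z]\wedge T=0$ for $\nu$-a.e.\ $Z$; equivalently, $u|_Z$ is harmonic on $Z$. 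To upgrade this to a statement about pairs, pick canonical local potentials $u_{Z'}=\log|f_{Z'}|$ of $[Z']$ from local defining equations, and write $u=\int u_{Z'}\,d\nu(Z')$ up to a pluriharmonic term. Restricting to a fixed $Z$ as above, $\int u_{Z'}|_Z\,d\nu(Z')$ is harmonic on $Z$; since each $u_{Z'}|_Z$ is either subharmonic on $Z$ or identically $-\infty$ on a connected component, the positivity $\ddc(u_{Z'}|_Z)\geq 0$ forces, for $\nu$-a.e.\ $Z'$, either $\ddc(u_{Z'}|_Z)=0$ (so $f_{Z'}|_Z$ is nonvanishing and $Z\cap Z'=\emptyset$ in the chart), or $u_{Z'}|_Z\equiv -\infty$ on a component of $Z$ (so $Z$ and $Z'$ share that component). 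In either case $Z\cap Z'$ is open in $Z$, and by symmetry in $Z'$, which is the laminarity condition.

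\textbf{Main obstacle.} The hardest technical point is justifying the Fubini step, i.e., the commutation of $\ddc$ with the integration against $\nu$ and the identification of $\ddc(u\,[Z])$ with the slice $[Z]\wedge T$. This rests on the $T$-integrability of $u$ to guarantee that $u|_Z\in L^1_{\mathrm{loc}}(Z)$ for $\nu$-a.e.\ $Z$, so that $u\,[Z]$ is a well-defined current on which Bedford--Taylor applies leaf-wise, and on the $\mathcal B_1$-finiteness of $\nu$ to secure convergence. A closely related subtlety is the rigorous local integral representation of $u$ as $\int u_{Z'}\,d\nu(Z')$ up to a pluriharmonic term; this uses the canonical choice $u_{Z'}=\log|f_{Z'}|$ together with uniqueness of psh potentials modulo pluriharmonic functions. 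Once these steps are in place, the positivity--harmonicity dichotomy delivering either disjointness or a common component of $Z,Z'$ is routine.
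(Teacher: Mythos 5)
The paper does not actually prove this proposition: it is quoted from de Th\'elin--Dinh \cite{dThD} (``see for instance''), so there is no in-text argument to compare with. Your overall scheme --- compute $T\wedge T$ once analytically via a potential and once geometrically via the web, and let positivity of the leafwise contributions force their vanishing --- is indeed the standard Bedford--Lyubich--Smillie/Dujardin/de Th\'elin strategy. But as written it has a genuine gap at its central step. The elements $Z\in\mathcal B_{k-1}$ in Definition \ref{d:woven} are explicitly \emph{not} assumed closed in $D$: they are only locally closed submanifolds with locally finite volume. Consequently $[Z]$ is in general not a closed current ($d[Z]$ charges $\overline Z\setminus Z$), and $\ddc(u[Z])$ is \emph{not} positive: integrating by parts on $Z$ produces boundary terms of indefinite sign, and the identity $\ddc(u[Z])=[Z]\wedge\ddc u$ fails. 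So from $0=T\wedge T=\int \ddc(u[Z])\,d\nu(Z)$ you cannot conclude that each term vanishes. The same problem reappears in your second stage: a leaf $Z'$ that is not closed in the chart has no defining function $f_{Z'}$ there, so the representation $u=\int\log|f_{Z'}|\,d\nu(Z')+(\text{pluriharmonic})$ is unavailable. This non-closedness of the leaves is the technical heart of the result, rather than the Fubini and normalization issues you single out as the ``main obstacle''.

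The standard repair is to localize \emph{before} decomposing. Fix a countable basis of pairs of balls $B_i\Subset B_i'\Subset D$, set $\mathcal A_i:=\{Z:\ Z\cap B_i'\ \text{is closed in}\ B_i'\}$ and $T_i:=\int_{\mathcal A_i}[Z\cap B_i']\,d\nu(Z)$. Each $T_i$ is positive and \emph{closed} on $B_i'$ (every piece is a closed submanifold of $B_i'$ of finite volume), and $R_i:=T-T_i\ge 0$ is also closed there; expanding $T\wedge T=(T_i+R_i)\wedge(T_i+R_i)$ bilinearly, with all four Bedford--Taylor terms positive thanks to the $T$-integrability of the potentials, gives $T_i\wedge T_i\le T\wedge T=0$. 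Your potential-theoretic argument then applies verbatim to $T_i$, whose leaves are genuine closed hypersurfaces of a ball admitting defining functions, and yields that for $\nu\otimes\nu$-a.e.\ pair $(Z,Z')\in\mathcal A_i\times\mathcal A_i$ the set $Z\cap Z'\cap B_i'$ is open in both leaves. Since every point of $Z\cap Z'$ lies in some $B_i$ with $Z,Z'\in\mathcal A_i$, laminarity follows. Two further corrections: since $T$ has bidegree $(1,1)$ on $D\subset\C^k$, the leaves are $(k-1)$-dimensional hypersurfaces and $[Z]\wedge T$ is a $(2,2)$-current, not a measure on a Riemann surface (your picture is the case $k=2$); accordingly ``harmonic'' must be ``pluriharmonic'' throughout, although the Lelong--Poincar\'e dichotomy you invoke does carry over to hypersurfaces.
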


Observe that the integrability of the local potentials of $T$ with respect 
to $T$ guarantees that the intersection
$T\wedge T$ is well-defined.

\subsection{Shadows of currents on $D\times  \mathbb{G}({q},k)$}\label{ss:shadows}

In this section, we consider again a bounded convex open subset $D\subset \mathbb C^k$ and, for a given fixed integer
$1\leq {q}\leq k-1$,
the Grassmannian $\mathbb G ({q},k)$ as in the previous section. We will consider currents on $D\times \mathbb G ({q},k)$ and define a suitable projection of these currents on $D$, that we will call their shadows. This notion is related
to the \emph{h-dimension} as in \cite{DS18density}
(see also \cite{Nguyen21,Vu21})
and allows one to detect the excess of
dimension in
{the vertical} directions.
For simplicity,
we will only consider the product space 
$D\times \mathbb G ({q},k)$ that we will need later, but most of this section generalizes to arbitrary product spaces. 
 In particular, the construction could be carried out on $D\times \mathbb P^{L({q},k)}$.

\begin{definition}
\label{d:D-dimension}
      Let $S\neq 0$ be a
      (non necessarily closed)
      positive
    current of bidimension $({q},{q})$  on $D\times\mathbb{G}({q},k)$. 
Let $l_0$ be such that
 \begin{enumerate}
    \item $S\wedge (\pi_D)^*({\omega_D^{l_0}})\neq 0$;
        \item $S\wedge (\pi_D)^*\omega_D^{l_0+1}=0$.
    \end{enumerate}
We call
$l_0$ the \emph{$D$-dimension}
of $S$.
\end{definition}

Observe that $l_0$ as in the above definition always exists and 
satisfies $l_0\leq q\leq k$.

{\begin{lemma}\label{l:CS}
   Let $S$ and $l_0$ be as in Definition \ref{d:D-dimension}. We have
$S \wedge (\pi_D)^* \alpha =0$
for every smooth $(2l_0+1)$-form $\alpha$ on $D$.  
\end{lemma}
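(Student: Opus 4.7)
The plan is to reduce, by linearity, to showing vanishing against monomial forms $\pi_D^*(dz_I \wedge d\bar z_J)$ with $|I| + |J| = 2l_0 + 1$, and then combine a diagonal vanishing extracted from the hypothesis with Cauchy--Schwarz for positive currents. First, I would decompose $\alpha$ into its bihomogeneous components and, in local coordinates on $D$, into monomials, reducing the task to showing $S \wedge \pi_D^*(dz_I \wedge d\bar z_J) = 0$ for all multi-indices $I, J \subset \{1,\dots,k\}$ with $|I| + |J| = 2l_0 + 1$. Since $2l_0 + 1$ is odd, $|I| \neq |J|$, and complex conjugation (using that $\alpha$ is real and $\bar S = S$) lets me assume $|I| \geq l_0 + 1$.

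Next, I would extract a diagonal vanishing from the hypothesis. The standard expansion
\[
\omega_D^{l_0+1} = (l_0+1)! \sum_{|M|=l_0+1}\bigwedge_{j \in M}(i\,dz_j \wedge d\bar z_j)
\]
exhibits $\omega_D^{l_0+1}$ as a sum of strongly positive $(l_0+1,l_0+1)$-forms on $D$. Each current $S \wedge \pi_D^*\bigwedge_{j \in M}(i\,dz_j \wedge d\bar z_j)$ is therefore positive, and a sum of positive currents equal to zero must vanish term by term. Hence $S \wedge \pi_D^*(dz_M \wedge d\bar z_M) = 0$ for every $|M| = l_0 + 1$. Iterating by wedging with further positive factors $\pi_D^*(i\,dz_j \wedge d\bar z_j)$ extends this to $S \wedge \pi_D^*(dz_{I''} \wedge d\bar z_{I''}) = 0$ for all multi-indices $I''$ with $|I''| \geq l_0 + 1$.

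Finally, I would apply Cauchy--Schwarz for positive currents. Testing $\langle S, \pi_D^*(dz_I \wedge d\bar z_J) \wedge \phi\rangle$ against a smooth form $\phi$ on $D \times \mathbb{G}(q,k)$ and expanding $\phi$ in the local frame spanned by $dz_{I'}, d\bar z_{J'}, dw_K, d\bar w_L$ (with $w$ denoting local coordinates on $\mathbb{G}(q,k)$), each resulting term can be rearranged, up to a sign, as $\langle S, \gamma_1 \wedge \overline{\gamma_2}\rangle$ with
\[
\gamma_1 := f\,\pi_D^* dz_{I''} \wedge dw_K,
\qquad
\gamma_2 := \pi_D^* dz_{J''} \wedge dw_L,
\]
where $I'' \supseteq I$, $J'' \supseteq J$, and the Grassmannian multi-indices $K, L$ are chosen so that the bidegree identities $|I''| + |K| = q = |J''| + |L|$ hold, making both $\gamma_1$ and $\gamma_2$ forms of bidegree $(q, 0)$. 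The Cauchy--Schwarz inequality
\[
|\langle S, \gamma_1 \wedge \overline{\gamma_2}\rangle|^2 \leq \langle S, \gamma_1 \wedge \overline{\gamma_1}\rangle \cdot \langle S, \gamma_2 \wedge \overline{\gamma_2}\rangle
\]
then forces the left-hand side to vanish, since $\gamma_1 \wedge \overline{\gamma_1}$ is, up to a positive scalar, $|f|^2\,\pi_D^*(dz_{I''} \wedge d\bar z_{I''}) \wedge dw_K \wedge d\bar w_K$ with $|I''| \geq l_0 + 1$, and the previous step gives $\langle S, \gamma_1 \wedge \overline{\gamma_1}\rangle = 0$.

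The main obstacle is the bidegree mismatch $|I| \neq |J|$ forced by the odd total degree $2l_0 + 1$, which prevents a direct Cauchy--Schwarz between $\pi_D^* dz_I$ and $\pi_D^* dz_J$. The resolution is that the Grassmannian factor provides just enough extra room in the $dw, d\bar w$ directions to absorb the discrepancy, allowing $\gamma_1$ and $\gamma_2$ to be equalized to the common bidegree $(q, 0)$ where the classical Cauchy--Schwarz inequality applies cleanly.
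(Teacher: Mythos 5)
Your proof is correct and follows essentially the same strategy as the paper: isolate the excess holomorphic degree in the $D$-directions, apply the Cauchy--Schwarz inequality for the positive current $S$, and observe that one diagonal term involves a pulled-back form of bidegree at least $(l_0+1,l_0+1)$ on $D$, which vanishes by hypothesis (ii) of Definition \ref{d:D-dimension}. The paper phrases this with an abstract splitting $\alpha=\beta\wedge\Omega$ rather than your monomial frames, but the mechanism is identical.
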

\begin{proof}
     We can assume that $l_0< q$
(as otherwise the statement is clear)
and 
it is enough to consider the case where $\alpha$
is of the form $\alpha = \beta \wedge \Omega$
for some $(\ell,\ell)$-form $\beta$
with $\ell\le l_0$ and 
$(2l_0-2\ell +1,0)$-form $\Omega$, and  show that
$\langle S \wedge (\pi_D)^* \alpha, \Phi\rangle=0$ for every compactly supported
smooth
$(q+\ell-2l_0-1, q-\ell)$-form
$\Phi$ of the form $\Phi= \gamma\wedge \Theta$, for some
$(q+\ell-2l_0-1,q+\ell-2l_0-1)$-form
$\gamma$ and $(0,2l_0-2\ell +1)$-form  $\Theta$.
 
  Applying Cauchy-Schwarz inequality, we have
\[
\begin{aligned}
|\langle S\wedge (\pi_D)^* \alpha, \Phi\rangle|^2
& =
|\langle S \wedge (\pi_D)^* \beta \wedge \gamma,
(\pi_D)^* \Omega \wedge \Theta\rangle|^2\\
& \leq
|\langle S \wedge (\pi_D)^* \beta  \wedge \gamma,
(\pi_D)^* (\Omega \wedge  \overline \Omega)\rangle|
\cdot
|\langle S \wedge (\pi_D)^* \beta  \wedge \gamma,
\Theta\wedge \overline \Theta\rangle|=0.
\end{aligned}\]
Since $(\pi_D)^*(\beta  \wedge 
\Omega \wedge  \overline \Omega)$ is a smooth $(2l_0-\ell+1,2l_0-\ell+1)$-form and $\ell\le l_0$,
the first  factor in the last term vanishes
by the assumption (ii) in Definition \ref{d:D-dimension}.
\end{proof}
}

\begin{definition}\label{d:shadow}
        Let $S$ be a (non necessarily closed)
positive  
current of bidimension $({q},{q})$  on $D\times\mathbb{G}({q},k)$
and 
$l_0$
the $D$-dimension of $S$ as in Definition \ref{d:D-dimension}. 
   The \emph{shadow} of $S$ on $D$
   is the 
   $(l_0,l_0)$-bidimensional
   current  $ \underline{S}$ on $D$
      given by 
   \[
\underline S := (\pi_D)_* (S\wedge 
(\pi_{\G})^*\omega_{\G}^{{q}-l_0} ).   
   \]
\end{definition}

Observe that $\underline S$ satisfies
    \begin{equation}\label{eq:defshadow}
        \langle\underline{S},\theta\rangle
        =\langle {S},(\pi_{\G})^*\omega_{\G}^{{q}-l_0}\wedge (\pi_D)^*\theta\rangle=\int_{D\times \G({q},k)} S\wedge(\pi_{\G})^*\omega_{\G}^{{q}-l_0}\wedge(\pi_D)^*\theta,
    \end{equation}
for every compactly supported smooth $(l,l)$-form
$\theta$ on
  $D$.  
  In particular,
  the shadow $\underline{S}$ of $S$
is well-defined 
since $(\pi_{\G})^*\omega_{\G}^{{q}-l_0}\wedge(\pi_D)^*\theta$ is a compactly supported smooth $({q},{q})$-form on $D\times \G({q},k)$.

\medskip

The following proposition collects some properties of the shadows that we will need in the sequel and justifies the terminology in Definitions 
\ref{d:D-dimension} and \ref{d:shadow}.

\begin{proposition}\label{p:shadow}
Let $S$ be a positive  current of bidimension $({q},{q})$  on $D\times\mathbb{G}({q},k)$
and 
$l_0$ be its $D$-dimension.
  Then the 
  shadow $\underline{S}$ of $S$ 
  is a 
   non zero
  positive  current of bidimension $(l_0,l_0)$ on $D$, satisfying
  \begin{equation}\label{eq:shadowmass}
     \|\underline{S}\|_{{\tilde{D}}}= \|S \wedge (\pi_D)^*({\omega_D^{l_0}})\|_{{\tilde{D}}\times\mathbb{G}({q},k)}
  \end{equation}
  on any open subset ${\tilde{D}} \Subset D$.
 Moreover, if $S$ is closed on {$U\times \mathbb{G}({q},k)$} for some open subset $U\subset D$,  
 then $\underline S$ is closed on $U$.
  \end{proposition}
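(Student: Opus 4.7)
The plan is to establish the structural claims (positivity, bidimension, closedness) directly, then prove the mass identity \eqref{eq:shadowmass}, and finally read off the nonvanishing from it.

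For positivity and bidimension: $S$ is positive of bidimension $(q,q)$ and $(\pi_{\mathbb{G}})^*\omega_{\mathbb{G}}^{q-l_0}$ is a smooth positive $(q-l_0,q-l_0)$-form, so $S\wedge (\pi_{\mathbb{G}})^*\omega_{\mathbb{G}}^{q-l_0}$ is a positive current of bidimension $(l_0,l_0)$ on $D\times \mathbb{G}(q,k)$. Since $\mathbb{G}(q,k)$ is compact, $\pi_D$ is proper and its pushforward $\underline S$ is a well-defined positive current of bidimension $(l_0,l_0)$ on $D$. For closedness, $d$ commutes with pushforward and $\omega_{\mathbb{G}}$ is closed, so
\[
d\underline S = (\pi_D)_*\bigl(dS\wedge (\pi_{\mathbb{G}})^*\omega_{\mathbb{G}}^{q-l_0}\bigr),
\]
which vanishes on $U$ whenever $dS=0$ on $U\times \mathbb{G}(q,k)$.

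The heart of the argument is the mass identity. Let $\omega_{D\times \mathbb{G}}:=(\pi_D)^*\omega_D+(\pi_{\mathbb{G}})^*\omega_{\mathbb{G}}$, which is a K\"ahler form on $D\times \mathbb{G}(q,k)$. I would compute the mass $\|S\wedge (\pi_D)^*\omega_D^{l_0}\|_{\tilde D\times \mathbb{G}(q,k)}$ as the integral of $S\wedge (\pi_D)^*\omega_D^{l_0}\wedge \omega_{D\times \mathbb{G}}^{q-l_0}$, and expand $\omega_{D\times \mathbb{G}}^{q-l_0}$ via the binomial formula. The resulting terms are of the shape $S\wedge (\pi_D)^*\omega_D^{l_0+j}\wedge (\pi_{\mathbb{G}})^*\omega_{\mathbb{G}}^{q-l_0-j}$, and for $j\ge 1$ each vanishes: the defining identity $S\wedge (\pi_D)^*\omega_D^{l_0+1}=0$, together with the positivity of $S$ and of the remaining factor $(\pi_D)^*\omega_D^{j-1}$, forces $S\wedge (\pi_D)^*\omega_D^{l_0+j}=0$ for every $j\ge 1$. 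Only the $j=0$ term survives, and applying the projection formula to pull $(\pi_D)^*\omega_D^{l_0}$ out of the pushforward yields
\[
\|S\wedge (\pi_D)^*\omega_D^{l_0}\|_{\tilde D\times \mathbb{G}(q,k)}=\int_{\tilde D}\underline S\wedge \omega_D^{l_0}=\|\underline S\|_{\tilde D},
\]
which is precisely \eqref{eq:shadowmass}. From this identity the nonvanishing of $\underline S$ is immediate: by condition (i) in Definition \ref{d:D-dimension} the positive current $S\wedge (\pi_D)^*\omega_D^{l_0}$ is nonzero and so has strictly positive mass on some $\tilde D\Subset D$, forcing $\|\underline S\|_{\tilde D}>0$ and hence $\underline S\neq 0$.

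The only step that deserves genuine care is the propagation "$S\wedge (\pi_D)^*\omega_D^{l_0+1}=0$ implies $S\wedge (\pi_D)^*\omega_D^{l_0+j}=0$ for all $j\ge 1$"; it relies on the associativity of the wedge product between a positive current and smooth semi-positive forms, and is where positivity of $S$ together with condition (ii) in Definition \ref{d:D-dimension} really enters. Everything else reduces to the projection formula and the standard behaviour of pushforwards of positive currents under proper maps.
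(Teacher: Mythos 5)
Your proof is correct and follows essentially the same route as the paper's: the mass identity is obtained by the same binomial expansion of the K\"ahler form $(\pi_D)^*\omega_D+(\pi_{\mathbb G})^*\omega_{\mathbb G}$ together with the vanishing $S\wedge(\pi_D)^*\omega_D^{l_0+j}=0$ for $j\ge 1$ (the paper runs the computation from $\|\underline S\|$ to the mass of $S\wedge(\pi_D)^*\omega_D^{l_0}$, you run it in the opposite direction), and closedness is the same commutation of $d$ with the proper pushforward that the paper spells out via Stokes. One minor remark: the propagation $S\wedge(\pi_D)^*\omega_D^{l_0+1}=0\Rightarrow S\wedge(\pi_D)^*\omega_D^{l_0+j}=0$ does not actually need positivity, since wedging the zero current with a smooth form is zero by associativity of the wedge with smooth forms.
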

\begin{proof}  Since $S$ is a positive  current on $D\times \G({q},k)$, by 
\eqref{eq:defshadow} $\underline{S}$ is a  positive current of bidimension $(l_0,l_0)$
on $D$. Since  $S\wedge (\pi_D)^*({\omega_D^{l_0+1}})=0$ we have
\begin{align*}
    \|\underline{S}\|_{{\tilde{D}}}=\int_{{\tilde{D}}} \underline{S}\wedge \omega_D^{l_0}
  &=\int_{{\tilde{D}}\times \G({q},k)} S\wedge (\pi_{\mathbb{G}})^*\omega_{\G}^{{q}-l_0}\wedge (\pi_D)^*\omega_D^{l_0}\\
    &=\int_{{\tilde{D}}\times \G({q},k)} S\wedge (\pi_D)^*\omega_D^{l_0}\wedge \left((\pi_{\mathbb{G}})^*\omega_{\G}+ (\pi_D)^*\omega_D\right)^{{q}-l_0} \\
    &=\|S\wedge (\pi_D)^*\omega_D^{l_0}\|_{{\tilde{D}}\times \G({q},k)}.
\end{align*}
 Hence,
 \eqref{eq:shadowmass} holds. 
 
Let us now prove the last assertion. Since the problem is local, we can assume that $U$ is a small ball compactly supported in $D$. 
Assume first that we have
$l_0=0$. In this case, since $\underline{S}$ is a measure on $D$, it is clear that $\underline{S}$ is closed.  
Assume now that $l_0>0$ and  
let $\theta$ be a smooth {$(l_0-1,l_0)$}-form
compactly supported
on $U$. By using Stokes' formula twice and the fact that $S$ is a closed  current of bidimension $({q},{q})$  on $U\times\mathbb{G}({q},k)$,
we have
\begin{align*}
\langle d\underline{S},\theta\rangle&=\langle \underline{S},d\theta\rangle=\langle \underline{S},\partial\theta\rangle=\langle {S},(\pi_{\mathbb{G}})^*\omega_{\G}^{{q}-l_0}\wedge\pi_D^*(\partial\theta)\rangle=\langle {S},(\pi_{\mathbb{G}})^*\omega_{\G}^{{q}-l_0}\wedge\pi_D^*(d\theta)\rangle\\
&=\langle {S}\wedge (\pi_{\mathbb{G}})^*\omega_{\G}^{{q}-l_0},d(\pi_D^*(\theta))\rangle=\langle d{(S\wedge (\pi_{\mathbb{G}})^*\omega_{\G}^{{q}-l_0})},\pi_D^*(\theta)\rangle\\
&=\langle dS\wedge (\pi_{\mathbb{G}})^*\omega_{\G}^{{q}-l_0},\pi_D^*(\theta)\rangle =0.
\end{align*}
By similar arguments, we can also obtain that 
$\langle d\underline{S},\theta\rangle=0$
 when $\theta$ is a smooth $(l_0,l_0-1)$-form
 compactly supported
 on $U$. Hence, $\underline{S}$ is 
 closed on $U$ and the proof is complete.
  \end{proof}

\subsection{End of the proof of Theorem \ref{t:intro-all}}
\label{ss:proof-up-to-prop}

We use the notations as at the beginning of Section \ref{s:holder}.
We also set
$D' := M' \times N'$.
The following theorem completes the proof of Theorem \ref{t:intro-all}.

\begin{theorem}\label{t:intro-green-laminar}
Let $f$ be a H\'enon-like map on $D$
 as above.
If $d_{p-1}^+<d$,
then $T^{-}$ is woven.
If, furthermore, we have 
$p=k-1$, then $T^-$ is laminar.
\end{theorem}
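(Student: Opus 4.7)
\medskip

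\noindent\textbf{Proof proposal for Theorem \ref{t:intro-green-laminar}.}

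My plan is to mirror, for $T^-$, the argument sketched in the introduction for $T^+$. Fix a smooth horizontal $p$-dimensional submanifold (or horizontal analytic set) $\Sigma\subset D$ of slice mass $1$, set $\Sigma_n := f^n(\Sigma) \subset D$ and let $\hat\Sigma_n\subset D\times \G(p,k)$ denote its Grassmannian lift as in \eqref{eq:hatsigma}. From Theorem \ref{t:greencurrents} and \eqref{eq:dpn=d} I get $d^{-n}[\Sigma_n]\to T^-$ and $\|[\Sigma_n]\|\asymp d^n$, so by de Thélin's theorem (Theorem \ref{th:dTh}) it suffices to prove the volume estimate
\[
\hat v_n := \volume(\hat\Sigma_n) = O(d^n) \qquad \mbox{as } n\to \infty.
\]
The rest of the wovenness proof is dedicated to this single estimate.

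For the volume estimate I would argue by contradiction: suppose $\hat v_n/d^n\to \infty$ along some subsequence. Setting $\hat R_n := \hat v_n^{-1}[\hat\Sigma_n]$ produces positive closed currents of bidimension $(p,p)$ on $D\times\G(p,k)$ of bounded mass, so a subsequential weak limit $\hat R$ exists and is nonzero, positive, and closed. Using $(\pi_D)_*\hat R_n = \hat v_n^{-1}[\Sigma_n]$, whose mass is $\lesssim d^n/\hat v_n \to 0$, I conclude $(\pi_D)_*\hat R = 0$, which forces $\hat R\wedge (\pi_D)^*\omega_D^p = 0$. Hence the $D$-dimension $l_0$ of $\hat R$ (Definition \ref{d:D-dimension}) satisfies $l_0 < p$, and by Proposition \ref{p:shadow} the shadow $\underline{\hat R}$ is a nonzero positive closed current of bidimension $l_0$ on $D$. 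The main obstacle, and the heart of the argument, is then to convert this qualitative information into a quantitative lower bound: the part of $\hat\Sigma_n$ producing the excess volume projects onto $D$ with a systematic defect of dimension, and after extracting the correct pieces (in practice: slicing $\hat\Sigma_n$ against appropriate test forms coming from $\omega_\G$ and $\omega_D$ and taking shadows at finite $n$) one constructs a sequence $S_n$ of horizontal positive closed currents of bidimension $l_0<p$ on $D$ satisfying $\|(f^n)_* S_n\|_{M'\times N} \gtrsim d^n \|S_n\|$. This forces $d_{l_0}^+ \geq d$, contradicting the monotonicity of the degree sequence $\{d^+_s\}$ (proved in \cite{BDR}) together with $d^+_{p-1}<d$. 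Hence $\hat v_n = O(d^n)$ and de Thélin's criterion applies, giving that $T^-$ is woven.

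For the laminarity statement when $p=k-1$, I would invoke Proposition \ref{p:crit-lam}, which requires $T^-$ to be woven (already proved), to have locally integrable potentials with respect to itself, and to satisfy $T^-\wedge T^- = 0$. Since $p=k-1$ forces $d^-_{k-p-1}=d^-_0$, which is trivially bounded, the symmetric version of Theorem \ref{t:super-Holder} applied to $f^{-1}$ (which is a vertical-like map) shows that $T^-$ has H\"older continuous super-potentials and, in the present bidegree $(1,1)$, H\"older continuous local potentials. This takes care of the integrability assumption and guarantees that $T^-\wedge T^-$ is well-defined as a positive closed horizontal current of bidimension $(p-1,p-1)$. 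To see that it vanishes, I use the functional equation $f_* T^- = d T^-$: since $f$ is a biholomorphism between its domain and image and $T^-$ has continuous potentials, $f_*$ commutes with the self-wedge, so $(f^n)_*(T^-\wedge T^-) = d^{2n}(T^-\wedge T^-)$. Comparing with the definitional bound $\|(f^n)_* (T^-\wedge T^-)\| \lesssim d^+_{p-1,n}\|T^-\wedge T^-\|$ and taking $n$-th roots gives $d^2\leq d^+_{p-1}$, which contradicts $d^+_{p-1}<d\leq d^2$ unless $\|T^-\wedge T^-\|=0$. Thus $T^-\wedge T^- = 0$, and Proposition \ref{p:crit-lam} yields laminarity of $T^-$.
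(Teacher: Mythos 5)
Your reduction of wovenness to the estimate $\hat v_n=O(d^n)$ via de Th\'elin's criterion, and your treatment of laminarity when $p=k-1$ (H\"older potentials from the symmetric version of Theorem \ref{t:super-Holder} applied to $f^{-1}$, then $f_*(T^-\wedge T^-)=d^2(T^-\wedge T^-)$ forcing $d^2\le d^+_{k-2}<d$, a contradiction), both match the paper. However, the proof of the volume estimate itself --- which is the heart of the theorem and occupies all of the paper's Proposition \ref{p:In=Odn} and its proof --- is not actually supplied: you restate the strategy from the introduction ("one constructs a sequence $S_n$ \dots satisfying $\|(f^n)_*S_n\|\gtrsim d^n\|S_n\|$") without carrying out the construction, and the single weak limit you extract does not suffice to produce it.

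Concretely, from $\hat v_n/d^n\to\infty$ you take one limit current $\hat R=\lim \hat v_n^{-1}[\hat\Sigma_n]$ and observe its $D$-dimension is $l_0<p$. This is correct but yields only a static object: to contradict $d^+_{l_0}<d$ you need a \emph{family} of currents related by the dynamics, namely limits $S^{(s)}_\infty$ of suitably renormalized currents at times $n_j-s$ satisfying $d^s S^{(0)}_\infty\le (F^s)_*S^{(s)}_\infty$, whose shadows are closed, horizontal, of uniformly bounded mass, with $\|\underline{S}^{(0)}_\infty\|=1$. Getting this requires (a) decomposing the lifted mass as $I(n)=\sum_l\binom{p}{l}I_l(n)$ with $I_l(n)=\int(\pi_D)^*\chi\cdot(F^n)_*\hat R\wedge(\pi_D)^*\omega_D^l\wedge(\pi_\G)^*\omega_\G^{p-l}$ and a careful combinatorial selection (the paper's Lemma \ref{l:subsequence}) of the index $l$ and subsequence $n_j$ so that $C_{l'}(n_j-s)/C_l(n_j)\to 0$ for $l'>l$ --- this is what makes the limits have $D$-dimension exactly $l$ and the shadow of $S^{(0)}_\infty$ have mass exactly $1$; your normalization by $\hat v_n$ does not control these ratios at the shifted times $n_j-s$; (b) proving that the shadow operation is compatible with $(f^s)_*$, which needs the comparison $(F^s)_*((\pi_{\P^L})^*\omega_{\P^L}^{p-l})=(\pi_{\P^L})^*\omega_{\P^L}^{p-l}+dd^cU_s+V_s$ with $V_s$ degenerate on the fibres, together with the Cauchy--Schwarz vanishing Lemma \ref{l:CS} and the closedness statement of Lemma \ref{l:ddcSs=0} to kill the error terms. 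None of this is routine, and "slicing against appropriate test forms and taking shadows at finite $n$" does not identify these steps. A smaller omitted point: de Th\'elin's criterion is applied on $M''\times N$, so one must afterwards propagate wovenness to all of $D$ by pushing the weaving measure forward by $f$ (using $d\cdot T^-=f_*((T^-)_{|M''\times N})$), which your write-up also skips.
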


Let $\Sigma$ be a horizontal $p$-dimensional plane of $M\times N'$. By Theorem \ref{t:greencurrents}
we have $d^{-n}(f^n)_*[\Sigma]\to T^-$.
Let $\hat{\Sigma}$ be the submanifold of $D\times  \mathbb{G}(p,k)$ defined
by means of 
\eqref{eq:hatsigma} and set $R:=[\Sigma]$ and $\hat{R}:=[\hat{\Sigma}]$. As above,  we denote by  $\omega_\G$  the
K\"{a}hler form of $\G(p,k)$ induced by the Fubini-Study form on  $\mathbb P^{L(p,k)}$. 
Again, to emphasize $D$,
we use the notation $\omega_D$ instead of $\omega$
to denote
the standard
K\"{a}hler form of $\mathbb{C}^k$. 
Recall that $\pi_D$ and $\pi_\mathbb{G}$ are the natural projections of $D\times  \mathbb{G}(p,k)$ to its first and the second factors, respectively, and 
that 
we use $(\pi_{\mathbb{G}})^*\omega_{\G}+ (\pi_D)^*\omega_D$ as a K\"{a}hler form on $D\times  \mathbb{G}(p,k)$.

\medskip

Consider the map 
\[F: D_{v,1} \times \mathbb{G}(p, k) \to D_{h,1} \times \mathbb{G}(p, k)
\quad \mbox{ defined as }
\quad 
F = (f, Df).\]
Hence, 
for $(z, H_{P}) \in D \times \mathbb{G}(p, k)$, we have
$F(z,H_P) =(f(z), H_{f(P)})$, where $P$ is a $p$-dimensional submanifold of $D$ passing through $z$, and $H_P$ and $H_{f(P)}$ are 
the elements in $\mathbb{G}(p, k)$ representing parallel planes 
to $T_zP$ and $T_{f(z)}f(P)$, respectively. 
Observe that if $Q$ and $Q'$ are $p$-dimensional submanifolds of $D$ passing through $z$, with $T_z Q=T_z Q'$,
we have $H_Q=H_{Q'}$. 
For any  $z_0 \in D_{v,1}$,
we can identify
$F(z_0, \cdot)$ 
to an automorphism $\gamma_{z_0}$ on 
$\mathbb G(p,k)$.
It is then 
clear that $F$ is invertible from its domain
$D_{v,1}\times \mathbb{G}(p, k)$
to its image $D_{h,1}\times \mathbb{G}(p, k)$. 
 The map $F$ can also be seen as a map
from $D_{v,1} \times \mathbb{P}^{L(p, k)}$ to 
$D_{h,1} \times \mathbb{P}^{L(p, k)}$. In particular, every $F(z_0, \cdot)$ 
can also be seen as an automorphism of 
$\mathbb{P}^{L(p, k)}$.

\medskip

The following is the key step towards Theorem \ref{t:intro-green-laminar}.

 \begin{proposition}\label{p:In=Odn}
 Let $f,F,\Sigma,\hat{R}, M''$, and $N$
 be as above.  Then 
 \[\|(F^n)_*\hat{R}\|_{M''\times N\times\G(p,k)}=O(d^{n})
 \quad \mbox{ as } n\to\infty.\]
 \end{proposition}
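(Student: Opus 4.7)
\textbf{The plan is} to decompose the mass against the product K\"ahler form $\omega_{D\times\G}:=(\pi_D)^*\omega_D+(\pi_\G)^*\omega_\G$ on $D\times\G(p,k)$, writing
\[
\|(F^n)_*\hat R\|_{M''\times N\times\G}=\sum_{j=0}^p\binom{p}{j}\,v_{n,j},\qquad
v_{n,j}:=\int_{M''\times N\times\G}(F^n)_*\hat R\wedge(\pi_D)^*\omega_D^{p-j}\wedge(\pi_\G)^*\omega_\G^j,
\]
and to prove $v_{n,j}=O(d^n)$ separately for each $0\le j\le p$. Each $v_{n,j}$ is non-negative since $(F^n)_*\hat R$ is positive. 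The case $j=0$ is easy: using $\pi_D\circ F^n=f^n\circ\pi_D$ and $(\pi_D)_*[\hat\Sigma]=[\Sigma]$, the projection formula gives $v_{n,0}=\|(f^n)_*[\Sigma]\|_{M''\times N}$, which is $O(d^n)$ by \eqref{eq:dpn=d} applied to the horizontal current $[\Sigma]$ of bidimension $(p,p)$.

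\textbf{For $j\ge 1$}, the key reformulation is
\[
v_{n,j}=\|S_{n,j}\|_{M''\times N},\qquad S_{n,j}:=(\pi_D)_*\bigl((F^n)_*\hat R\wedge(\pi_\G)^*\omega_\G^j\bigr),
\]
where $S_{n,j}$ is a positive closed horizontal current on $M\times N''$ of bidimension $(p-j,p-j)\le (p-1,p-1)$: closedness uses $d\omega_\G=0$, and horizontality uses $\pi_D(\mathrm{supp}((F^n)_*\hat R))=f^n(\Sigma)\subset M\times N''$ for $n\ge 1$. The assumption $d_{p-1}^+<d$ and the monotonicity of the dynamical degrees \cite{BDR} make the $S_{n,j}$ ``too low-dimensional'' to grow at rate $d^n$---but the bound on $d_{p-j}^+$ cannot be invoked directly since $S_{n,j}$ is not itself the push-forward of a fixed current.

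\textbf{The heart of the proof} is a contradiction argument. Suppose that, along some subsequence, $v_n:=\|(F^n)_*\hat R\|_{M''\times N\times\G}$ satisfies $v_n/d^n\to\infty$ (as would follow from failure of $v_{n,j_0}=O(d^n)$ for some $j_0\ge1$, given the already-proved bound on $v_{n,0}$). The normalized positive currents $T_n:=v_n^{-1}(F^n)_*\hat R$ have unit mass on $M''\times N\times\G$, are supported in the fixed compact set $\overline{M\times N''}\times\G$, and hence (along a further subsequence) converge weakly to a positive closed current $T_\infty$ of bidimension $(p,p)$. From $\|(\pi_D)_*T_n\|_{M''\times N}=v_n^{-1}\|(f^n)_*[\Sigma]\|_{M''\times N}=O(d^n/v_n)\to 0$, one gets $(\pi_D)_*T_\infty=0$ on $M''\times N$. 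By the projection formula and positivity of $T_\infty\wedge(\pi_D)^*\omega_D^p$, this upgrades to $T_\infty\wedge(\pi_D)^*\omega_D^p=0$ on $M''\times N\times\G$. Therefore the $D$-dimension of $T_\infty$ on $M''\times N\times\G$ is some $l_0\le p-1$, and Proposition~\ref{p:shadow} delivers a \emph{non-zero} positive closed horizontal $(l_0,l_0)$-current $\underline{T_\infty}$ on $M''\times N$.

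\textbf{The main obstacle} is the final step: turning this limit object into an explicit sequence of horizontal closed $(l,l)$-currents $S_n$ on $D$ with $l\le p-1$ satisfying $\|(f^n)_*S_n\|\gtrsim d^n\|S_n\|$, which, by the very definition of $d_l^+$, contradicts $d_l^+\le d_{p-1}^+<d$. I expect this will use the intertwining $\pi_D\circ F=f\circ\pi_D$ to transfer the excess growth $v_{n+1}/v_n$ (which must exceed $d$ in an averaged sense) from the ambient space $D\times\G$ down to push-forwards of horizontal sub-maximal-dimensional currents on $D$, likely by taking $S_n$ to be a suitably renormalized finite-$n$ shadow of $(F^n)_*\hat R$ (or of an $F$-iterate thereof) and then carefully extracting a diagonal subsequence along which the desired growth holds. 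Ensuring simultaneously that the push-forward mass grows like $d^n$ and that $S_n$ remains of bidimension strictly smaller than $p$ is the technical heart of the argument.
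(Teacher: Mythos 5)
Your setup matches the paper's: the decomposition of the mass into the terms $v_{n,j}$ is exactly the paper's $I_l(n)$ (with $l=p-j$, and a cutoff function $\chi$ in place of restriction to $M''\times N\times\G$, which the paper needs so that the limit currents remain closed where $\chi\equiv 1$), the case $j=0$ is the paper's Lemma \ref{l:Ip(n)} proved the same way via the projection formula and \eqref{eq:dpn=d}, and the contradiction scheme via shadows of limit currents of $D$-dimension $l<p$ is also the paper's. However, the step you yourself flag as ``the main obstacle'' is precisely the technical heart of the paper's proof, and your proposal does not supply it; as written, the argument stops at a single non-zero closed horizontal current $\underline{T_\infty}$ of bidimension $(l_0,l_0)$ with $l_0<p$, which by itself contradicts nothing.

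Concretely, what is missing is the construction of a whole family of limit currents indexed by a time shift $s$, together with the invariance relation that feeds into Definition \ref{d:degrees-currents}. The paper first proves a purely combinatorial extraction lemma (Lemma \ref{l:subsequence}) producing an index $l<p$ and a subsequence $\{n_j\}$ such that $C_{\tilde l}(n)\le\beta C_l(n_j)$ for all $n\le n_j$ and $C_{l'}(n_j-s)/C_l(n_j)\to 0$ for all $l'>l$ and all $s$; it then normalizes $(F^{n_j-s})_*\hat R$ by $C_l(n_j)\,d^{n_j-s}$ (not by the total mass $v_{n_j}$, whose growth relative to $d^{s}$ between consecutive times is exactly what is unknown) to obtain limits $S^{(s)}_\infty$ that all have the \emph{same} $D$-dimension $l$, uniformly bounded mass, non-trivial shadow at $s=0$, and satisfy $d^s S^{(0)}_\infty\le (F^s)_*S^{(s)}_\infty$. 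Transferring this inequality to the shadows requires a further non-trivial argument: one compares $(F^s)_*\big((\pi_{\P^L})^*\omega_{\P^L}^{p-l}\big)$ with $(\pi_{\P^L})^*\omega_{\P^L}^{p-l}$ modulo a $dd^c$-exact term and a form vanishing on the fibres of $\pi_D$ (using that each $F^s(z_0,\cdot)$ is an automorphism of $\P^{L}$), kills the $dd^c$-term by a Stokes argument (Lemma \ref{l:ddcSs=0}) and the fibre-vanishing term by the $D$-dimension bound (Lemma \ref{l:CS}). Only then does $d^s\underline{S}^{(0)}_\infty\le (f^s)_*\underline{S}^{(s)}_\infty$ with uniformly bounded $\|\underline{S}^{(s)}_\infty\|$ yield $d\le d_l^+\le d_{p-1}^+<d$. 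Your diagonal-subsequence heuristic points in the right direction, but without the two-parameter extraction and the push-forward comparison above the contradiction is not reached, so the proposal as it stands is incomplete.
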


The proof of Proposition \ref{p:In=Odn}
is given in the next section.
We now conclude 
the proof of Theorem \ref{t:intro-green-laminar} assuming Proposition \ref{p:In=Odn}.

\begin{proof}[{Proof of Theorem \ref{t:intro-green-laminar}}]
 Let $\Sigma$ be a horizontal submanifold of $M\times N'$ of 
  pure dimension $p$ and
  such that  $d^{-n}(f^n)_*[\Sigma]$ converge to $T^-$.
   Let $v_n$ and $\hat{v}_n$
  be the volume of $\Sigma_n:=f^{n}(\Sigma)$  on $M''\times N$ and
  of $\hat{\Sigma}_n$
  on $M''\times N \times\G(p,k)$, respectively, where $\hat{\Sigma}_n$ is the submanifold of $D\times  \mathbb{G}(p,k)$ defined by means of \eqref{eq:hatsigma}. Note that, since $d^{-n}(f^n)_*[\Sigma]$ converge to $T^-$ we have
  \begin{equation}\label{eq:vn=dn}
     d^n\lesssim v_n\lesssim d^n. 
  \end{equation}
   By definition of $\hat{\Sigma}_n$ and $F$ we can see that $\hat{\Sigma}_n=F^n(\hat{\Sigma})$. Hence, we have
   \[\hat v_n=\|(F^n)_*[\hat{\Sigma}]\|_{M''\times N\times\G(p,k)}.\]
   We deduce from Proposition \ref{p:In=Odn} 
   that $\hat{v}_n=O(d^n)$
   as $n\to \infty$.
   It follows from \eqref{eq:vn=dn} that we have $\hat{v}_n=O(d^n)=O(v_n)$.  Hence, thanks to  Theorem \ref{th:dTh}, we conclude that $(T^-)_{|M''\times N}$ is woven on $D$ (or, equivalently, that $T^-$ is woven as a current on $M''\times N$). In order to conclude the proof of the first assertion, we need to deduce that $T^-$ is woven on all of $D$.

 \medskip

Recall that 
$\mathcal{B}_p$ denotes
the set of  $p$-dimensional connected complex submanifolds of $D$ with
 locally finite
 $2p$-dimensional volume. Denote similarly by 
 $\mathcal{B}''_p$
the set of  $p$-dimensional connected complex submanifolds of $M''\times N$ with
 locally finite
 $2p$-dimensional volume.
 As $(T^-)_{|M''\times N}$ is woven, we can write it as 
     \begin{equation}\label{eq:woven-second}
        (T^-)_{|M''\times N}=\int_{Z\in\mathcal{B}''_p }[Z]d\nu(Z),
    \end{equation}
 where  $\nu$ is a
 $\mathcal{B}''_p$-finite measure on $\mathcal{B}''_p$. Define  a map
 \[\mathcal{F}:\mathcal{B}''_p \to \mathcal{B}_p
 \quad
 \mbox{ as }\quad
 Z\mapsto f(Z\cap D_{v,1}).\]
  The map $\mathcal F$ is well-defined by \eqref{e:hp-inclusion}.
   Let us also define a measure $\tilde{\nu}$ on $\mathcal{B}_p$ as
  \[
\tilde \nu := d^{-1} \mathcal F_* \nu.
\]
Since $\nu$ is
$\mathcal{B}''_p$-finite on $\mathcal {B}''_p$, 
$\tilde{\nu}$ is 
$\mathcal{B}_p$-finite on $\mathcal{B}_p$. 
Indeed, let $K$ be any compact subset of $D$. Thanks to \eqref{e:hp-inclusion} there exists a 
 convex open set $M^\star\Subset M''$ such that $f^{-1}(K)\subset M^\star\times N$.  Then, we have
\begin{align*}
  \int_{Z\in\mathcal{B}_{p}}\langle[Z],\omega^{p}|_K\rangle d\tilde{\nu}(Z)  &= d^{-1}\int_{Z\in{\mathcal{B}_{p}''}}\langle[f(Z)],\omega^{p}|_K\rangle d{\nu}(Z)\\
  &=d^{-1}\int_{Z\in{\mathcal{B}_{ p}''}}\langle[Z],
  f^*(\omega^{p}|_K)\rangle d{\nu}(Z) \\
  &\lesssim \int_{Z\in{\mathcal{B}_p''}}\langle[Z],\omega^{p}|_{M^\star\times N'}\rangle d{\nu}(Z),
  \end{align*}
 where in the last inequality we used  \cite[Lemma 3.3]{BDR} and the fact that 
 {we have}
 $Z\subset M''\times N'$ {for $\nu$-almost every $Z$}. Since $\nu$ is $\mathcal{B}''_p$-finite,
 we deduce from the last inequality that $\tilde{\nu}$ is 
$\mathcal{B}_p$-finite on $\mathcal{B}_p$.

Again by \eqref{e:hp-inclusion},
   we also have  $d\cdot T^-=f_*\left((T^-)_{|M''\times N}\right)$ on $D$. 
   Hence, by \eqref{eq:woven-second},
   we have
      \[   T^-
             =d^{-1}f_*\left((T^-)_{|M''\times N}\right)
        =d^{-1}\int_{Z\in\mathcal{B}''_p}f_*[Z]d\nu(Z)
        =\int_{Z\in \mathcal{B}_p}[Z]d\tilde{\nu}(Z)
       \]
   on $D$.
 Hence, $T^-$ is woven on $D$.
     \medskip

In order to conclude the 
proof of Theorem \ref{t:intro-green-laminar}, we need to show that
$T^-$ is laminar
assuming that 
$p=k-1$
{(observe that this implies 
$d^{-}_{k-p-1}=d^-_0=1<d$)}. 
Recall that the condition $p=k-1$
 implies that
$T^-$
 is of bidegree $(1,1)$.
 By Theorem \ref{t:super-Holder},
 $T^-$
 has H\"older continuous local potentials.
 So $T^-\wedge T^-$ is a well-defined  horizontal positive closed $(2,2)$-current\footnote{It is actually enough for this 
 to apply \cite[Theorem 4.1]{DNS}, which gives that 
 $T^-$
 has bounded local potentials.}.
 By Proposition \ref{p:crit-lam}, it is enough to show that
 $T^-\wedge T^-\ne 0$.

 Assume by contradiction that $T^-\wedge T^-\ne 0$. By the
 $f$-invariance of $T^-$
 and the fact that $f$ is invertible,
 we have $f_* (T^-\wedge T^-)=d^2(T^-\wedge T^-)$. By Definition \ref{d:degrees-currents}, this
 implies that $d^2\le d^+_{k-2}$. Since by assumption
 we have
 $d^+_{k-2}<d=d^+_{k-1}$, this gives $d^2<d$, which implies that $d<1$.
Since we always have $d\geq 1$, 
 this gives the desired contradiction and shows that $T^-\wedge T^-=0$. The proof is complete. 
\end{proof}

We conclude this section with the  following technical
lemma, that we will need in the proof of 
Proposition \ref{p:In=Odn}. 

\begin{lemma}
    \label{l:subsequence}
For every $0\le \tilde l\le p$ let $\{C_{\tilde l}(n)\}_{n\in\mathbb N}$  be a sequence of positive numbers such that $\{C_{p}(n)\}_{n\in\mathbb N}$  is bounded and $$\limsup_{n\to\infty}\max_{0\le \tilde l\le p-1} C_{\tilde l}(n)=\infty. $$ Then there exists 
a positive constant $\beta$,
an index $0\leq l<p$, and a sequence $\{n_j\}_{j\in\mathbb N}$ such that
\begin{itemize}
\item[{\rm (i)}]  
$C_{\tilde{l}}(n)\leq \beta C_l(n_j)$ 
for all $n\le n_j$ and 
all $0\le\tilde{l}\le p$;
  \item[{\rm (ii)}]   
   we have 
  \[\lim\limits_{j\to\infty}
  \frac{
  C_{l'}(n_j-s)}{C_l(n_j)}=0 
  \quad \mbox{ for all } l<l'\le p
  \mbox{ and all } s\in \mathbb N.\]
\end{itemize}
\end{lemma}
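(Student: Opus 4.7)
My plan is to find $(l,\beta,\{n_j\})$ via a two-stage procedure: an initial pigeonhole argument yielding property (i) with $\beta=1$ (but possibly not (ii)), followed by an iterative upgrade step which, whenever (ii) fails with some witness $(l',s)$, replaces $l$ by the strictly larger index $l'$ at the cost of enlarging $\beta$. Since $l$ strictly increases at each upgrade and cannot reach $p$, the loop terminates after at most $p$ rounds, and at termination both (i) and (ii) hold.

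For the initialization, set $M(n):=\max_{m\le n,\,0\le\tilde l\le p}C_{\tilde l}(m)$. By hypothesis $M(n)\to\infty$, and for each $n$ we pick $\ell(n)\in\{0,\dots,p\}$ and $m(n)\le n$ with $C_{\ell(n)}(m(n))=M(n)$. Since $C_p$ is bounded, $\ell(n)<p$ once $M(n)>\sup_m C_p(m)$, so by pigeonhole some $l_0<p$ is attained infinitely often. Relabeling the corresponding $m(n)$'s as $n_j^{(0)}$ gives $C_{l_0}(n_j^{(0)})=M(n_j^{(0)})\to\infty$ together with $C_{\tilde l}(n)\le C_{l_0}(n_j^{(0)})$ for all $n\le n_j^{(0)}$ and all $\tilde l$, which is (i) with $\beta_0=1$.

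For the iterative step, suppose $(l_i,\beta_i,\{n_j^{(i)}\})$ satisfies (i) and $C_{l_i}(n_j^{(i)})\to\infty$. If (ii) holds we stop; otherwise there exist $l_i<l'\le p$ and $s\in\mathbb N$ with $\limsup_j C_{l'}(n_j^{(i)}-s)/C_{l_i}(n_j^{(i)})>0$. The boundedness of $C_p$ together with $C_{l_i}(n_j^{(i)})\to\infty$ rules out $l'=p$. Extracting a further subsequence, we obtain $\eta>0$ with $C_{l'}(n_j^{(i)}-s)\ge\eta\,C_{l_i}(n_j^{(i)})$, and we set $l_{i+1}:=l'$, $n_j^{(i+1)}:=n_j^{(i)}-s$, $\beta_{i+1}:=\beta_i/\eta$. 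For $n\le n_j^{(i+1)}$ the chain
\[
C_{\tilde l}(n)\le \beta_i\,C_{l_i}(n_j^{(i)})\le (\beta_i/\eta)\,C_{l_{i+1}}(n_j^{(i+1)})
\]
yields (i) at step $i+1$, while $C_{l_{i+1}}(n_j^{(i+1)})\ge\eta\,C_{l_i}(n_j^{(i)})\to\infty$ propagates the divergence property used in the next round. Since $l_i$ strictly increases inside $\{0,\dots,p-1\}$, the loop ends in at most $p$ steps, producing $(l,\beta,\{n_j\})$ satisfying both (i) and (ii). The main delicate point is maintaining $C_{l_i}(n_j^{(i)})\to\infty$ throughout the induction, as this divergence is precisely what forbids the procedure from climbing to the illegal index $p$ and thus guarantees termination.
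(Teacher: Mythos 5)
Your proposal is correct. Both your argument and the paper's rest on the same core mechanism: a record/maximum-based selection gives (i) with $\beta=1$, and any failure of (ii), witnessed by a pair $(l',s)$, lets one trade the current index for the strictly larger $l'$ at the cost of a multiplicative constant, with the boundedness of $C_p$ preventing the index from ever reaching $p$. The organization, however, is genuinely different. The paper first restricts attention to the finite set of indices that realize the running record infinitely often, fixes that set by a subsequence extraction, and then runs a one-pass ``tournament'' between the smallest such index and the others in decreasing order, performing at most one switch; verifying (ii) for the switched pair then requires chaining two limits. You instead run a greedy iteration: upgrade whenever (ii) fails, with the strictly increasing index $l_i\in\{0,\dots,p-1\}$ serving as an explicit termination measure. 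Your version is easier to verify locally — each upgrade step preserves (i) and the divergence $C_{l_i}(n_j^{(i)})\to\infty$ by a two-line computation — and it treats every possible witness $l'$ uniformly (indices with bounded sequences are killed automatically because the denominator diverges), whereas the paper's tournament only pits the candidate against indices from its pre-selected maximizing set. The price is that your constant $\beta$ and your time-shift may be compounded up to $p$ times rather than once, which is harmless here. Two small points you gloss over but which are routine: the chosen maximizers $m(n)$ in the initialization take infinitely many values (since $C_{l_0}(m(n))=M(n)\to\infty$), so an increasing relabeling exists; and the identity $M(m(n))=M(n)$, which is what makes $C_{l_0}(n_j^{(0)})$ dominate all $C_{\tilde l}(m)$ with $m\le n_j^{(0)}$, deserves a word of justification.
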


\begin{proof}
Set
$$L:=\{\tilde l:\limsup_{n\to \infty} C_{\tilde l}(n)=\infty\}=:\{l'_1,l'_2,...,l'_{m'}\},$$
where $1\le m'\le p$ and 
$l'_1< \dots <l'_{m'}$.
By assumption, we have $p\notin L$ and so $l'_{m'}<p$. Set
 $$a_n:=\max\{C_{\tilde l}(n): {\tilde l}\in L\}.$$
Since by assumption we have 
$\limsup_{n\to \infty} a_n=\infty$,
there exists a subsequence $a_{\tilde n_j}$ such that $a_n< a_{\tilde n_j}$ for every $j$ and $n< {\tilde n_j}$.  
 We will only use this sequence $\{\tilde n_j\}$ or a subsequence of it.
For every $j$,
define 
$$L_j:=\{\tilde l\in L: C_{\tilde l}({\tilde n_j})=a_{{ \tilde n_j}}\}$$
As $L$ is finite, up to replacing the sequence
$\{{ \tilde n_j}\}$
by a subsequence, we can assume that
$L_j$ does not depend on $j$, i.e., that we can write
\[
\mathcal L := L_j =:  \{l_1, \dots, l_m\}
\quad \mbox{ for every } j.
\]
It is clear that the pair $(l_1,\{{\tilde n_j}\}_{j\in\mathbb N})$ satisfies  
(i)  for $\beta=1$.
We now describe a procedure that,
by possibly modifying the pair and $\beta$, will also lead to (ii).

\medskip

We will play
the following simple game 
between $l_1$ and the other indexes $l_r$, for $1<r\leq m$. 
If there exists 
$s\in \mathbb N$ such that 
\begin{equation}\label{eq:nj-s}
 \limsup\limits_{j\to \infty} \frac{C_{{l}_r}( \tilde n_j-s)}{C_{{l_1}}( \tilde n_j)}>0  \end{equation}
then $l_r$ is the winner. Otherwise, $l_1$ is the winner. 

\medskip

We start playing the game 
above
between $l_1$ and $l_m$. 
If the winner is $l_m$,
we stop playing. Otherwise, we play the game between $l_1$ and $l_{m-1}$. Again, if the winner is $l_{m-1}$ we stop playing, otherwise we continue the game between $l_1$ and $l_{m-2}$. 
After repeating the procedure 
at most $m-1$ times, we have the final winner. If the final winner is $l_1$ then it is clear that the pair $(l_1,\{{\tilde n_j}\}_{j=1}^\infty)$ satisfies  both 
(i)
and (ii).
In this case, the proof is complete. 
We can then assume that the final winner is $l_{r_0}$ for some $1<r_0\le m$.
Observe that this means that $l_{r_0}$ was the winner between $l_1$ and $l_{r_0}$ (i.e., \eqref{eq:nj-s} holds with $r_0$ instead of $r$, for some given choice $s_0$ of $s$),
but also
that $l_1$ was the winner against all $l_{r'}$, for all $r'> r_0$.

\medskip

After again choosing a subsequence, if necessary, we can assume that the limsup
in \eqref{eq:nj-s} 
(with $r=r_0$ and $s=s_0$)
is actually a limit, i.e.,
that
we have
\begin{equation}\label{eq:nj-s-bis}
 \lim \limits_{j\to \infty} \frac{C_{{l}_{r_0}}({\tilde n_j}-s_0)}{C_{{l_1}}({\tilde n_j})}
 =: A >0.
 \end{equation}
We can 
also assume that $\tilde{n}_1>s_0$.
Take {$(l, \{n_j\}_{j\in \mathbb N})$}
as a new pair, 
where {${n}_j:={\tilde n_j}-s_0$ and $l:=l_{r_0}$}.
We now 
show that this pair satisfies 
both the requests 
(i)
and (ii).

\medskip
By \eqref{eq:nj-s-bis},
we have
$C_{l_1}({\tilde n_j})\le \beta\cdot C_{l}(n_j)$ for every $j$, 
for some positive 
constant {$\beta$} independent of $j$. 
By construction,
for any $n\le {\tilde n_j}$ and $0\le \tilde{l}\le p$,
we also 
have $C_{\tilde{l}}(n)\le C_{l_1}({\tilde n_j})$. As a consequence, 
we  have 
   $$C_{\tilde{l}}(n) \le C_{l_1}({\tilde n_j}) \le {\beta}\cdot C_{l}({{n}_j})
   \quad
   \mbox{ for all } 
   n\le n_j
   \mbox{ and }0\le {\tilde{l}}\le p.$$
So, $(l,\{{n}_j\}_{j\in \mathbb N})$
satisfies
(i).
For the sake of contradiction, assume that
$(l,{n}_j)$
does not satisfy (ii).
In this case, 
there must
exist $l_{r_1}$
with $r_0<r_1\le m$ 
and $s_1$ 
such that  
\begin{equation}\label{eq:connj-s}
 \limsup\limits_{j\to \infty} \frac{C_{{l}_{r_1}}({n}_j-s_1)}{C_{l}({n}_j)}>0.
\end{equation}
Again, after choosing a subsequence
if necessary, we can assume that the above $\limsup$ is a limit. Since $l_1$ was 
the winner when playing the game between $l_1$ and $l_{r_1}$,
we have
 $$0=\lim\limits_{j\to \infty} \frac{C_{{l}_{r_1}}({\tilde{n}_j}-s_0-s_1)}{C_{{l_1}}({\tilde n_j})}
 =
 \lim\limits_{j\to \infty}\frac{C_{l}({\tilde n_j}-s_0)}{C_{{l_1}}({\tilde n_j})}\cdot \frac{C_{{l}_{r_1}}({n_j}-s_1)}{C_{{l}}({n_j})}>0. $$  
The last inequality follows from  \eqref{eq:nj-s-bis}
and \eqref{eq:connj-s} and the assumption that the $\limsup$ in \eqref{eq:connj-s} is actually
a limit.
This gives a contradiction. Hence, the pair
$(l,\{{n}_j\}_{j\in \mathbb N})$
satisfies (ii)
and the proof is complete.
\end{proof}

\subsection{Proof of Proposition \ref{p:In=Odn}}\label{ss:proof-prop-In}

Take a smooth vertical cut-off function $0\le\chi\le1 $ on $D$, equal to $1$ in a neighbourhood of $\overline{M}''\times N$  and supported on $M'\times{N}$. 
For every $0\leq l\leq p$
define
$$I_l(n):=\int_{D\times\G(p,k)} (\pi_D)^*(\chi) \cdot (F^n)_*(\hat{R})\wedge (\pi_D)^*\omega_D^l\wedge (\pi_{\mathbb G})^*(\omega_{\mathbb G})^{p-l}$$
and set
$$I(n):=\int_{D\times\G(p,k)} (\pi_D)^*(\chi) \cdot (F^n)_*(\hat{R})\wedge ((\pi_D)^*(\omega_D)+(\pi_{\mathbb{G}})^*\omega_\G)^p=\sum_{l=0}^p {p \choose l} I_l(n).$$ 
Observe
that $\|(F^n)_*(\hat{R})\|_{M''\times N\times\G(p,k)}\le I(n) $. So it is enough to show that $I(n)=O(d^n)$.

\begin{lemma}\label{l:Ip(n)}
We have
\begin{equation}\label{eq:Ip(n)}
  d^n\lesssim I_p(n)\lesssim d^{n}.  
\end{equation}
\end{lemma}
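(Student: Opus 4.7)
The plan is to reduce $I_p(n)$ to a pairing on $D$ alone, and then invoke two standard facts: the dynamical degree bound for the upper estimate, and the convergence $d^{-n}(f^n)_*R\to T^-$ from Theorem~\ref{t:greencurrents} for the lower one. From $F=(f,Df)$ I read off $\pi_D\circ F=f\circ \pi_D$, and iterating, $\pi_D\circ F^n=f^n\circ \pi_D$. Since $\pi_D\colon \hat{\Sigma}\to \Sigma$ is a biholomorphism on the regular part, we have $(\pi_D)_*\hat{R}=R$. Using $\omega_{\mathbb{G}}^0=1$ and the projection formula applied to the form $\chi\omega_D^p$ on $D$, we obtain
\[
I_p(n)=\langle (F^n)_*\hat{R},(\pi_D)^*(\chi\omega_D^p)\rangle=\langle (\pi_D)_*(F^n)_*\hat{R},\chi\omega_D^p\rangle=\langle (f^n)_*R,\chi\omega_D^p\rangle.
\]
Once this identity is in hand, the Grassmannian factor disappears and the problem is purely about the horizontal iterates $(f^n)_*R$.

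For the upper bound, since $0\le \chi\le 1$ is supported in $M'\times N$,
\[
I_p(n)\le \|\omega_D^p\|_{L^\infty}\cdot \|(f^n)_*R\|_{M'\times N}\lesssim d^+_{p,n}\,\|R\|_{M'\times N'}\lesssim d^n,
\]
by Definition~\ref{d:degrees-currents} and \eqref{eq:dpn=d}, using that $R=[\Sigma]$ is a horizontal positive closed $(p,p)$-current of finite mass on $M'\times N'$.

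For the lower bound, Theorem~\ref{t:greencurrents} gives $d^{-n}(f^n)_*R\to \|R\|_h\cdot T^-$. Thanks to \eqref{e:hp-inclusion}, the supports of $(f^n)_*R$ are uniformly contained in $M\times N''$ for $n\ge 1$, so pairing against the smooth form $\chi\omega_D^p$ is legitimate and weak convergence yields $d^{-n}I_p(n)\to \|R\|_h\,\langle T^-,\chi\omega_D^p\rangle$. The only non-routine point is the strict positivity of this limit. To verify it, I would pick any smooth probability measure $\Omega_M$ on $M$ compactly supported in $M''$; since $\chi\equiv 1$ on the support of $\pi_M^*\Omega_M$ and $\pi_M^*\Omega_M\lesssim \omega_D^p$ pointwise, we get $\chi\omega_D^p\gtrsim \pi_M^*\Omega_M$ as positive $(p,p)$-forms, hence
\[
\langle T^-,\chi\omega_D^p\rangle\gtrsim \langle T^-,\pi_M^*\Omega_M\rangle=\|T^-\|_h=1.
\]
This gives $I_p(n)\gtrsim d^n$ as $n\to\infty$, and the few initial values are absorbed in the implicit constant. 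The whole substance of the proof sits in the commutation $\pi_D\circ F=f\circ \pi_D$ and the positivity of the slice mass of $T^-$; the rest is standard bookkeeping.
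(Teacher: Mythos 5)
Your proof is correct and follows essentially the same route as the paper: the key identity $I_p(n)=\langle (f^n)_*R,\chi\,\omega_D^p\rangle$ via $\pi_D\circ F^n=f^n\circ\pi_D$, then \eqref{eq:dpn=d} for the upper bound. For the lower bound the paper estimates $\int_{M''\times N}(f^n)_*R\wedge\omega_D^p\gtrsim d^n$ directly from the domain-independence of the degrees and \eqref{eq:dpn=d}, whereas you pass to the weak limit via Theorem \ref{t:greencurrents} and check $\langle T^-,\chi\,\omega_D^p\rangle>0$; both arguments ultimately rest on the same comparison between mass over $M''$ and slice mass, and yours is equally valid.
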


\begin{proof}
By the definitions of $\hat R$ and $F$, we have
\begin{equation}\label{eq:piDF=f}
    (\pi_D)_*( (F^n)_*(\hat{R}))= (f^n)_*(R).
\end{equation}
Indeed, if $\theta$ is a smooth $(p,p)$-form compactly supported on $D$, 
we have
\begin{align*}
    \langle (\pi_D)_*( (F^n)_*(\hat{R})), \theta \rangle_D&=\langle  (F^n)_*(\hat{R}), (\pi_D)^*(\theta) \rangle_{D \times \mathbb{G}(p, k)}\\
    &=\int_{F^n(\hat \Sigma)}(\pi_D)^*(\theta)=\int_{f^n( \Sigma)}\theta=\langle  (f^n)_*({R})), \theta \rangle_D,
\end{align*}
which proves \eqref{eq:piDF=f}.
   By using this identity,
    we have 
\begin{align*}
  I_p(n)  & =\int_{D\times\G(p,k)} (\pi_D)^*(\chi) \cdot (F^n)_*(\hat{R})\wedge (\pi_D)^*\omega_D^p\\
  & =\int_D (\pi_D)_*\left((\pi_D)^*\chi  \cdot (F^n)_*(\hat{R})\wedge (\pi_D)^*\omega_D^p\right)\\
  & =\int_D \chi \cdot (f^n)_*(R)\wedge \omega_D^p\le \int_{M'\times N} (f^n)_*(R)\wedge \omega_D^p\lesssim d^n.
 \end{align*}
The last equality is due to \eqref{eq:dpn=d}. Similarly, since the dynamical degrees do
not depend on the choice of $M'$ and $N'$, by using \eqref{eq:dpn=d} again, we also
have
\begin{align*}
  I_p(n)  &  =\int_D \chi \cdot (f^n)_*(R)\wedge \omega_D^p\ge \int_{M''\times N} (f^n)_*(R)\wedge \omega_D^p\gtrsim d^n.
 \end{align*}
This completes the proof.
 \end{proof}

Recall that, in order to prove
Proposition \ref{p:In=Odn},
 it is enough to show that $I(n)=O(d^n)$.  Assume, for the sake of contradiction, that 
 this is false. For  $0\le \tilde l\le p$, denote 
 \begin{equation}\label{e:CI}
 C_{\tilde l}(n):=
 \frac{I_{\tilde l}(n)}{d^{n}}.
 \end{equation}
 Thanks to Lemma \ref{l:Ip(n)}
 and the contradiction assumption, 
 we can see that the sequences $C_{\tilde l}(n)$ satisfy the conditions of Lemma \ref{l:subsequence}.  Let $l$ and  $\{n_j\}_{j\in\mathbb N}$ be as given by
 Lemma \ref{l:subsequence} applied to these sequences. 
 
 \medskip

For all integers $s$ and 
$j$ such $n_j > s$,
set 
$$S_{j}^{(s)}
:=\frac{(\pi_D)^*\chi \cdot (F^{n_j-s})_*(\hat{R})}{C_l(n_j)d^{n_j-s}}.$$
Each $S_j^{(s)}$ is a current on $D\times \mathbb G(p,k)$.
Since $f$ is a horizontal-like map, we have $(f^s)_* (\chi)\ge \chi$ on  $D_{h,s}$. Since $D_{h,n_j}\subset D_{h,s}$ and $(F^s)_*S_{j}^{(s)}$ is supported  in $D_{h,n_j}\times \G(p,k)$, we have
\begin{equation}\label{eq:fsgedss}
    (F^s)_*S_{j}^{(s)}=\frac{(\pi_D)^*((f^s)_*\chi)\cdot (F^{n_j})_*(\hat{R})}{C_l(n_j)d^{n_j-s}}\ge\frac{(\pi_D)^*\chi\cdot (F^{n_j})_*(\hat{R})}{C_l(n_j)d^{n_j-s}}= d^s S_j^{(0)}.
\end{equation}

  \begin{lemma}\label{l:sjh}
 For every $s$ and $j$ such that $n_j>s$ we
  have
 \begin{equation*}
      \|S_j^{(s)}\wedge (\pi_D)^*\omega_D^{h}\|_{D\times \G(p,k)}=\sum_{m=0}^{p-h}{p-h \choose m}\frac{C_{m+h}(n_j-s)}{C_l(n_j)}
      \quad
      \mbox{  for all }
      \quad
      0\le h\le p.
 \end{equation*}
 \end{lemma}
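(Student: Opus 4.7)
The plan is to compute the mass directly from the definitions. Since $\hat{R}$ is a positive current of bidimension $(p,p)$ and $F^{n_j-s}$ is holomorphic, $(F^{n_j-s})_*\hat R$ is positive of bidimension $(p,p)$; hence $S_j^{(s)}$ is a positive current of bidimension $(p,p)$. Wedging with the positive form $(\pi_D)^*\omega_D^{h}$ gives a positive current of bidimension $(p-h,p-h)$ on $D\times \G(p,k)$. Its mass with respect to the fixed K\"ahler form $\omega_{D\times\G}:=(\pi_D)^*\omega_D+(\pi_{\mathbb G})^*\omega_{\mathbb G}$ is, by definition,
\[
\|S_j^{(s)}\wedge (\pi_D)^*\omega_D^{h}\|_{D\times \G(p,k)}=\int_{D\times \G(p,k)} S_j^{(s)}\wedge (\pi_D)^*\omega_D^{h}\wedge \bigl((\pi_D)^*\omega_D+(\pi_{\mathbb G})^*\omega_{\mathbb G}\bigr)^{p-h}.
\]

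Next, I apply the binomial theorem to the last factor. Since the pullbacks of $\omega_D$ and $\omega_{\mathbb G}$ are smooth forms that commute, one has
\[
\bigl((\pi_D)^*\omega_D+(\pi_{\mathbb G})^*\omega_{\mathbb G}\bigr)^{p-h}=\sum_{m=0}^{p-h}\binom{p-h}{m}(\pi_D)^*\omega_D^{m}\wedge (\pi_{\mathbb G})^*\omega_{\mathbb G}^{p-h-m}.
\]
Substituting and grouping the $(\pi_D)^*\omega_D$ factors gives
\[
\|S_j^{(s)}\wedge (\pi_D)^*\omega_D^{h}\|_{D\times \G(p,k)}
=\sum_{m=0}^{p-h}\binom{p-h}{m}\int_{D\times \G(p,k)} S_j^{(s)}\wedge (\pi_D)^*\omega_D^{m+h}\wedge (\pi_{\mathbb G})^*\omega_{\mathbb G}^{p-h-m}.
\]

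Finally, I unfold the definition of $S_j^{(s)}$ and recognize each integral. Since $\chi\geq 0$ is vertical and everything is positive, each term equals
\[
\frac{1}{C_l(n_j)\,d^{n_j-s}}\int_{D\times \G(p,k)}(\pi_D)^*\chi\cdot (F^{n_j-s})_*\hat R\wedge (\pi_D)^*\omega_D^{m+h}\wedge (\pi_{\mathbb G})^*\omega_{\mathbb G}^{p-(m+h)}
=\frac{I_{m+h}(n_j-s)}{C_l(n_j)\,d^{n_j-s}}.
\]
The last equality is just the definition of $I_{\tilde l}(n)$ with $\tilde l=m+h$ and $n=n_j-s$. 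Dividing numerator and denominator by $d^{n_j-s}$ and using \eqref{e:CI} replaces $I_{m+h}(n_j-s)/d^{n_j-s}$ by $C_{m+h}(n_j-s)$, yielding the claimed identity.

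No step is really an obstacle: the entire argument is a bookkeeping exercise, with the only minor care being that all objects involved are positive, so the masses really are computed by these integrals (justifying the use of the K\"ahler form $\omega_{D\times\G}$ rather than a test form). The key observation is simply that the exponents of $(\pi_D)^*\omega_D$ coming from the binomial expansion shift the index in $I_{\tilde l}$ by $h$, which is exactly what produces the factor $\binom{p-h}{m}\,C_{m+h}(n_j-s)/C_l(n_j)$.
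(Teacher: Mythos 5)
Your computation is correct and is essentially identical to the paper's proof: both expand $((\pi_D)^*\omega_D+(\pi_{\mathbb G})^*\omega_{\mathbb G})^{p-h}$ binomially, identify each resulting integral as $I_{m+h}(n_j-s)/(C_l(n_j)\,d^{n_j-s})$ from the definition of $S_j^{(s)}$ and $I_{\tilde l}$, and conclude via \eqref{e:CI}. No issues.
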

 
 \begin{proof}
 By a direct computation,
 for all $h$ as in the statement we have
 \begin{align*}
   \|S_j^{(s)}\wedge (\pi_D)^*\omega_D^{h}\|_{D\times \G(p,k)}= &\int_{D\times \G(p,k)} S_j^{(s)} \wedge (\pi_D)^*\omega_D^{h}\wedge ((\pi_D)^*(\omega_D)+(\pi_{\mathbb{G}})^*\omega_\G)^{p-h}\\  
   =&\sum_{m=0}^{p-h}{p-h \choose m}\int_{D\times \G(p,k)} {S_j^{(s)}\wedge (\pi_D)^*(\omega_D^{m+h})\wedge(\pi_\G)^*(\omega_\G^{p-h-m}) }\\
     =& \sum_{m=0}^{p-h}{p-h \choose m}\frac{I_{m+h}(n_j-s)}{C_l(n_j)d^{n_j-s}}=\sum_{m=0}^{p-h}{p-h \choose m}\frac{C_{m+h}(n_j-s)}{C_l(n_j)},
 \end{align*}
 where in the last step we used \eqref{e:CI}.
 The assertion follows.
\end{proof}

By the choice of $l$ and $\{n_j\}_{j\in \mathbb N}$ and Lemma \ref{l:subsequence}, for all $0\le m\le p$ we have $C_m(n_j-s)\lesssim C_l(n_j)$.
It follows from the choice of $\{n_j\}_{j\in \mathbb N}$ and Lemma
\ref{l:sjh} that
there exists a constant $c>1$ independent of $s$ and $j$ such that
\begin{equation*}
     1\le \|S_j^{(0)}\|_{D\times \G(p,k)}
     \quad
     \mbox{ and }\quad\|S_j^{(s)}\|_{D\times \G(p,k)}\le c \quad \mbox{ for all } s \geq 0.
\end{equation*}
In particular, 
for every
$s\geq 0$,
the sequence
$\{\|S_j^{(s)}\|\}_{j\in \mathbb N}$
is
uniformly bounded from above
by a constant independent of $s$.
We can then
fix in what follows 
a
current
$S^{(0)}_\infty$, which is a limit value
of the sequence $\{S_{j}^{(0)}\}_{j\in \mathbb N}$
along a given subsequence $\{j_i\}_{i\in \N}$, and a current
$S^{(s)}_\infty$, which is a limit value
of the sequence $\{S_{j}^{(s)}\}_{j\in \mathbb N}$
along a further subsequence of the sequence $\{j_i\}_{i\in \mathbb N}$.
By construction, we have
\begin{equation*}
     1\le \|S_\infty^{(0)}\|_{D\times \G(p,k)}
     \quad
     \mbox{ and }\quad\|S_\infty^{(s)}\|_{D\times \G(p,k)}\le c \quad \mbox{ for all } s\geq 0,
\end{equation*}
for a constant $c$ as above.

\begin{lemma}\label{l:c1}
The following properties hold for every $s\geq 0$:
\begin{enumerate}[label=(\roman*)]
     \item[{\rm (i)}]   
       $d^s S_\infty^{(0)}\le (F^s)_*(S^{(s)}_\infty)$;
           \item[{\rm (ii)}]  
             $S_\infty^{(s)}$ is
       compactly supported in ${D'}\times\G(p,k)$ and is
       closed on  $M''\times N\times\G(p,k)$;
       \item[{\rm (iii)}]   
            $S_\infty^{(s)}\wedge (\pi_D)^*\theta=0$ for any smooth $(l+1,l+1)$-form $\theta$ on $D$; 
         \item[{\rm (iv)}]  
            $\|S_\infty^{(0)}\wedge (\pi_D)^*(\omega_{D}^l)\|_{D'\times\G(p,k)}=1$ and there exists a constant 
         $c>0$ independent of $s$ such that $\|S_\infty^{(s)}\wedge (\pi_D)^* (\omega_{D}^l)\|_{D'\times\G(p,k)}\le c$. 
      \end{enumerate}
\end{lemma}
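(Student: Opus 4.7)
The plan is to deduce all four properties by passing to the weak limit along the chosen subsequence, using three main ingredients: the identity \eqref{eq:fsgedss} for (i), the mass formula of Lemma \ref{l:sjh} combined with Lemma \ref{l:subsequence} for (iii) and (iv), and the closedness of $\hat{R}=[\hat\Sigma]$ together with the localization properties of $\chi$ for (ii). A preliminary observation that underlies the whole argument is that the $S_j^{(s)}$ are uniformly supported in a fixed compact subset of $D'\times\G(p,k)$: this follows from the vertical nature of $\chi$ (whose $M$-projection is compactly contained in $M'$) together with \eqref{e:hp-inclusion}, which forces the $N$-projection of $\mathrm{supp}\,((F^n)_*\hat{R})$ to lie in $\overline{N''}\Subset N'$ for $n\ge 1$ (and in a single point of $N'$ for $n=0$). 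This uniformity ensures both continuity of masses of positive currents and continuity of the pushforwards $(F^s)_*$ across any weak limit.

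For (ii), compact support of $S_\infty^{(s)}$ in $D'\times \G(p,k)$ is directly inherited from the uniform compact support of $S_j^{(s)}$ just established. For closedness on $M''\times N\times\G(p,k)$, recall that $[\hat\Sigma]$ is closed on $D\times\G(p,k)$, so $(F^{n_j-s})_*[\hat\Sigma]$ is closed on the relevant region; and since $\chi\equiv 1$ on a neighbourhood of $\overline{M}''\times N$, multiplication by $(\pi_D)^*\chi$ preserves closedness there. Weak limits of closed currents being closed, the claim follows. For (i), equation \eqref{eq:fsgedss} reads $(F^s)_*S_j^{(s)}\ge d^sS_j^{(0)}$; taking the weak limit along the chosen subsequence, using the uniform compact support (which guarantees that $(F^s)_*S_j^{(s)}\to (F^s)_*S_\infty^{(s)}$ weakly and that inequalities between positive currents are preserved), yields (i).

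For (iii), applying Lemma \ref{l:sjh} with $h=l+1$ gives
\[
\|S_j^{(s)}\wedge(\pi_D)^*\omega_D^{l+1}\|_{D\times\G(p,k)}=\sum_{m=0}^{p-l-1}\binom{p-l-1}{m}\frac{C_{m+l+1}(n_j-s)}{C_l(n_j)},
\]
and every index $m+l+1$ is strictly greater than $l$, so by Lemma \ref{l:subsequence}(ii) each term tends to $0$. Continuity of masses along the weak limit (available thanks to the uniform compact support) gives $S_\infty^{(s)}\wedge(\pi_D)^*\omega_D^{l+1}=0$, which a standard positivity argument — writing any smooth $(l+1,l+1)$-form $\theta$ as a linear combination of strongly positive forms each bounded above by a multiple of $\omega_D^{l+1}$ — upgrades to arbitrary $\theta$. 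For (iv), the same scheme with $h=l$ isolates the distinguished $m=0$ term equal to $C_l(n_j-s)/C_l(n_j)$: this equals exactly $1$ when $s=0$ and is bounded by $\beta$ for $s\ge 0$ by Lemma \ref{l:subsequence}(i), while each remaining term vanishes in the limit by Lemma \ref{l:subsequence}(ii). The main technical obstacle throughout is the initial setup of a uniform compact support — this is the only place where the Hénon-like geometry really enters — after which the four claims reduce to routine bookkeeping with the mass formula and the properties of the distinguished subsequence.
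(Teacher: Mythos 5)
Your proof is correct and follows essentially the same route as the paper's: (i) by passing \eqref{eq:fsgedss} to the weak limit, (ii) from the localization of $\chi$ and of $f_*(R)$ via \eqref{e:hp-inclusion}, and (iii)–(iv) from Lemma \ref{l:sjh} with $h=l+1$ and $h=l$ combined with Lemma \ref{l:subsequence}. Your explicit front-loading of the uniform compact support (to justify continuity of masses and of $(F^s)_*$ under weak convergence) is a point the paper leaves implicit, but it is the same argument.
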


 \begin{proof}
It is immediate to see that 
(i)
follows from \eqref{eq:fsgedss}.   Since $f$ is horizontal-like,
$R$ has horizontal support in $M\times N'$, and $\chi$ has vertical support in $M'\times N$ it follows that 
$S_j^{(s)}$ is supported on $D'\times\G(p,k)$ for every $s\in \mathbb N$. 
Moreover,  since by \eqref{e:hp-inclusion} 
the current $f_* (R)$ 
is supported on
$M\times N''$, for every $j\geq 1$ 
the current $S_j^{(s)}$
is supported on
$\left(\mathrm{supp}(\chi)\cap (M\times N'' ) \right)\times \G (p,k) \Subset {D'}\times\G(p,k) $.
Hence,
$S^{(s)}_\infty$ has
compact support in $D'\times\G(p,k)$. Since by definition $S_j^{(s)}$ is closed in a neighbourhood of $\overline{M''}\times N\times\G(p,k)$
(as $\chi$ is constantly equal to 1 on an open neighbourhood of such set), 
the currents
$S_\infty^{(s)}$ are closed in  $M''\times N\times\G(p,k)$.  So, (ii)
follows.

\medskip

   In order to prove (iii),
     $S_\infty^{(s)}$ 
   is positive, it is enough to show that 
   \begin{equation}\label{eq:ssomegaD=0}
       S_\infty^{(s)}\wedge (\pi_D)^*\omega_D^{l+1}=0.
   \end{equation}
 Since $S_j^{(s)}\wedge (\pi_D)^*\omega_D^{l+1}$ is positive, it suffices to show that its mass tends to zero as $j\to\infty$.    By taking $h=l+1$ in Lemma \ref{l:sjh}
 (recall that $l\leq p-1$)
 and the fact that $l$ was given by  Lemma \ref{l:subsequence}
 (and in particular 
 the quantities $C_{\tilde l}$
 as in \eqref{e:CI} satisfy Lemma \ref{l:subsequence} (ii))
 we have
   \begin{align*}
     \lim_{j\to \infty}\|S_j^{(s)}\wedge (\pi_D)^*\omega_D^{l+1}\|& =  \lim_{j\to \infty} \sum_{m=0}^{p-l-1} {p-l-1 \choose m}\frac{C_{l+1+m}(n_j-s)}{C_l(n_j)} =0. 
   \end{align*}
  This gives \eqref{eq:ssomegaD=0}. The assertion
  (iii)
   follows.
  \medskip
  
  By taking  $h=l$ in Lemma 
  \ref{l:sjh} and using Lemma \ref{l:subsequence} (ii) again  we have
  \begin{align*}
     \lim_{j\to \infty}\|S_j^{(0)}\wedge (\pi_D)^*\omega_D^{l}\|& = 1+ \lim_{j\to \infty} \sum_{m=1}^{p-l} {p-l \choose m}\frac{C_{l+m}(n_j)}{C_l(n_j)} =1. 
   \end{align*}
By
taking  $h=l$ in Lemma 
  {\ref{l:sjh}} 
  and using Lemma \ref{l:subsequence} (i) we have
  \begin{align*}
     \|S_j^{(s)}\wedge (\pi_D)^*\omega_D^{l}\|& =\frac{C_{l}(n_j-s)}{C_l(n_j)} +  \sum_{m=1}^{p-l} {p-l \choose h}\frac{C_{l+m}(n_j-s)}{C_l(n_j)} \le c 
   \end{align*}
for some constant 
$c$ independent of $s$ and $j$.  This gives 
(iv)
and completes the proof. 
\end{proof}
 By Lemma \ref{l:c1}, both
$S^{(s)}_\infty$ and $S^{(0)}_\infty$ satisfy
the conditions in Definition \ref{d:D-dimension}, with $l_0=l$.
\begin{lemma}\label{l:ddcSs=0}
 Let $\theta $ be a smooth $(l,l)$-form 
   compactly supported 
   in $D$. Then we have
$$\partial
\left( (F^s)_*(S_\infty^{(s)})
       \wedge
       (\pi_D)^*\theta\right)=0
       \quad \mbox{ for every } s\geq 0.$$
\end{lemma}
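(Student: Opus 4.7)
My plan is to transfer the $\partial$ computation to the domain $D_{v,s}\times\G(p,k)$ of $F^s$ via the projection formula, and then exploit the two properties of $S_\infty^{(s)}$ provided by Lemma \ref{l:c1}. Since $\pi_D\circ F^s = f^s\circ \pi_D$ on $D_{v,s}\times \G(p,k)$, the projection formula gives
\[
(F^s)_*(S_\infty^{(s)})\wedge (\pi_D)^*\theta = (F^s)_*\bigl(S_\infty^{(s)}\wedge (\pi_D)^*(f^s)^*\theta\bigr),
\]
and since $\partial$ commutes with $(F^s)_*$,
\[
\partial\bigl((F^s)_*(S_\infty^{(s)})\wedge (\pi_D)^*\theta\bigr)=(F^s)_*\,\partial\bigl(S_\infty^{(s)}\wedge (\pi_D)^*(f^s)^*\theta\bigr).
\]
It therefore suffices to check that $\partial\bigl(S_\infty^{(s)}\wedge (\pi_D)^*(f^s)^*\theta\bigr)=0$ as a current on $D_{v,s}\times\G(p,k)$; by the Leibniz rule, I would verify this by treating the resulting two pieces separately.

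The first piece, $\partial S_\infty^{(s)}\wedge (\pi_D)^*(f^s)^*\theta$, vanishes because the inclusion $D_{v,s}\subset D_{v,1}\subset M''\times N$ from \eqref{e:hp-inclusion}, combined with Lemma \ref{l:c1}(ii), gives $\partial S_\infty^{(s)}=0$ on $D_{v,s}\times\G(p,k)$. The second piece, $\pm\,S_\infty^{(s)}\wedge (\pi_D)^*(f^s)^*\partial\theta$, involves the smooth form $(f^s)^*\partial\theta$ on $D_{v,s}$, which has total degree $2l+1$. Since $S_\infty^{(s)}$ has $D$-dimension $l$ by Lemma \ref{l:c1}(iii), the Cauchy--Schwarz argument used to prove Lemma \ref{l:CS}, applied to the vanishing $S_\infty^{(s)}\wedge (\pi_D)^*\omega_D^{l+1}=0$ on $D_{v,s}\times\G(p,k)$, forces this piece to vanish as well. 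Pushing forward by $F^s$ then yields the conclusion.

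The main subtle point to handle carefully is that the support of $S_\infty^{(s)}$ lies a priori in $D'\times\G(p,k)$, which is not contained in the domain $D_{v,s}\times \G(p,k)$ of $F^s$. This is resolved by reading $(F^s)_*(S_\infty^{(s)})$ as the push-forward of the restriction of $S_\infty^{(s)}$ to $D_{v,s}\times\G(p,k)$; thanks to \eqref{e:hp-inclusion}, this restriction sits inside $M''\times N\times\G(p,k)$, precisely the region where Lemma \ref{l:c1}(ii) provides closedness, which makes the two vanishings above rigorous.
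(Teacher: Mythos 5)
Your proof is correct and follows essentially the same route as the paper: both pass, via $(F^s)_*(S_\infty^{(s)})\wedge(\pi_D)^*\theta=(F^s)_*\bigl(S_\infty^{(s)}\wedge(\pi_D)^*((f^s)^*\theta)\bigr)$ and the commutation of $\partial$ with $(F^s)_*$, to the domain side, and both kill the term $S_\infty^{(s)}\wedge(\pi_D)^*((f^s)^*\partial\theta)$ using Lemma \ref{l:CS} and the fact that the $D$-dimension of $S_\infty^{(s)}$ equals $l$. The only point where you diverge is the remaining Leibniz term $\partial S_\infty^{(s)}\wedge(\pi_D)^*((f^s)^*\theta)$: the paper first proves the cut-off identity $(\pi_D)^*\chi\cdot\partial S_\infty^{(s)}=(\pi_D)^*(\partial\chi)\wedge S_\infty^{(s)}$ and then disposes of the resulting term $(\pi_D)^*(\partial\chi\wedge(f^s)^*\theta)$ by a second application of Lemma \ref{l:CS}, whereas you observe that $(F^s)_*$ only sees the restriction of $S_\infty^{(s)}$ to $D_{v,s}\times\G(p,k)\subset M''\times N\times\G(p,k)$, where the current is closed by Lemma \ref{l:c1}(ii), so that term vanishes outright; this shortcut is legitimate and consistent with how the paper itself interprets the push-forwards, and your explicit handling of the domain-of-definition issue is exactly the right point to flag.
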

\begin{proof}
Let us first show that 
the equality
\begin{equation}\label{eq:for-ddc-1}
(\pi_D)^*{\chi}
{\partial}
S_\infty^{(s)}=(\pi_D)^*({ \partial}\chi)\wedge S_\infty^{(s)}
\end{equation}
holds for every $s\geq 0$.
    By the definition of $S_j^{(s)}$ 
    and the fact that
    $ (F^{n_j-s})_*(\hat{R})$ is closed
    we have
$${ \partial} S_{j}^{(s)}
=\frac{(\pi_D)^*( {\partial} \chi) \cdot (F^{n_j-s})_*(\hat{R})}{C_l(n_j)d^{n_j-s}}$$
for every $s\geq 0$ and every $j$ such that $n_j>s$.
Multiplying $(\pi_D)^*\chi$ to both sides we obtain
$$(\pi_D)^*(\chi) { \partial} S_{j}^{(s)}
=(\pi_D)^*( { \partial} \chi)\wedge \frac{ (\pi_D)^*\chi\cdot (F^{n_j-s})_*(\hat{R})}{C_l(n_j)d^{n_j-s}}=(\pi_D)^*( { \partial} \chi)\wedge S_{j}^{(s)},$$
where we used the fact that the support of $ \partial \chi$ is contained in the support of $\chi$.
\eqref{eq:for-ddc-1} follows taking $j\to \infty$.

It 
follows from
\eqref{eq:for-ddc-1}
that, for any $\theta$
as in the statement and every $s\geq 0$,
we have
\begin{equation}\label{eq:for-ddc-3}
    (\pi_D)^*{\chi}
    {\partial}\left( S_\infty^{(s)}
       \wedge
       (\pi_D)^*((f^s)^*\theta)\right) 
       =S_\infty^{(s)}\wedge (\pi_D)^*(\Theta),
    \end{equation}
       where
 \[
\Theta   := 
       { \partial} \chi
       \wedge(f^s)^*\theta
       + \chi\cdot  (f^s)^*( { \partial} \theta)
          \]
 is a smooth 
 vertical
 $(l+1,{l})$-form  
in $D$.
Since  the $D$-dimension of $S_\infty^{(s)}$ 
is equal to $l$,
{by Lemma \ref{l:CS}} 
the right hand side of \eqref{eq:for-ddc-3} is equal to $0$.
As $\mathrm{Supp} \,S_\infty^{(s)}\subseteq \mathrm{Supp} \left((\pi_D)^*\chi\right)$, this implies
${ \partial} \left( S_\infty^{(s)}  \wedge    (\pi_D)^*((f^s)^*\theta)\right)=0$. So, we have
$${ \partial} \left( (F^s)_*(S_\infty^{(s)})        \wedge
       (\pi_D)^*\theta\right)=(F^s)_*
       { \partial}\left( S_\infty^{(s)}  \wedge    (\pi_D)^*((f^s)^*\theta)\right)=0.$$ The proof is complete.
\end{proof}

Let  now 
$\underline{S}^{(s)}_\infty$ and $\underline{S}_\infty^{(0)}$
be the shadows of 
${S}^{(s)}_\infty$ and
${S}_\infty^{(0)}$, respectively.
By Proposition \ref{p:shadow}, these
are positive currents of
bidimension $(l,l)$ on $D$.

\begin{lemma}\label{l:c2}
The following properties hold for every $s\geq 0$:
\begin{itemize}
\item[{\rm (i)}]   
we have $\|\underline{S}^{(0)}_\infty\|_{D'}=1$; furthermore, there exists a constant $c>0$ independent of $s$ such that $\|\underline{S}^{(s)}_\infty\|_{D'}\le c$;
    \item[{\rm (ii)}]   
        $\underline{S}^{(s)}_\infty$
     is
      horizontal on $M\times N'$ and closed on  $M''\times N$;
    \item[{\rm (iii)}]   
      $d^s \underline{S}^{(0)}_\infty\le (f^s)_*(\underline{S}^{(s)}_\infty)$.
  \end{itemize}
  \end{lemma}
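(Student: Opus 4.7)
The plan is to derive (i) and (ii) directly from Proposition \ref{p:shadow} combined with Lemma \ref{l:c1}, and to address (iii) via a careful analysis of $(F^s)^*(\pi_\mathbb{G})^*\omega_\mathbb{G}^{p-l}$. For (i), both $S_\infty^{(0)}$ and $S_\infty^{(s)}$ have $D$-dimension $l$ (as noted just before the lemma, this uses Lemma \ref{l:c1}(iii) for $\wedge(\pi_D)^*\omega_D^{l+1}=0$, Lemma \ref{l:c1}(iv) for $s=0$, and Lemma \ref{l:c1}(i) combined with the local comparability $(f^s)^*\omega_D^l\lesssim \omega_D^l$ for general $s$). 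The mass identity \eqref{eq:shadowmass} applied with $\tilde D=D'$ then reduces the two bounds to Lemma \ref{l:c1}(iv). For (ii), horizontality on $M\times N'$ follows from tracking supports: by \eqref{e:hp-inclusion} each $S_j^{(s)}$ is supported in $\mathrm{supp}((\pi_D)^*\chi)\cap(D_{h,n_j-s}\times\mathbb{G}(p,k))\subseteq (M'\times N'')\times\mathbb{G}(p,k)$, so $\underline{S_\infty^{(s)}}$ is supported in $M'\times N''\Subset M\times N'$; closedness on $M''\times N$ is then immediate from Lemma \ref{l:c1}(ii) via the last statement of Proposition \ref{p:shadow}.

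For (iii) my starting point is the inequality $d^s S_\infty^{(0)}\leq (F^s)_*S_\infty^{(s)}$ from Lemma \ref{l:c1}(i). I will pair it, for a smooth positive $(l,l)$-test form $\theta$ on $D$ chosen via \eqref{e:hp-inclusion} so that $(f^s)^*\theta$ has support in $M''\times N$, with the positive form $(\pi_\mathbb{G})^*\omega_\mathbb{G}^{p-l}\wedge(\pi_D)^*\theta$. The left-hand side becomes $d^s\langle\underline{S_\infty^{(0)}},\theta\rangle$, while the right-hand side, by the projection formula and $\pi_D\circ F^s=f^s\circ\pi_D$, equals $\langle S_\infty^{(s)},(F^s)^*(\pi_\mathbb{G})^*\omega_\mathbb{G}^{p-l}\wedge(\pi_D)^*(f^s)^*\theta\rangle$. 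Thus (iii) will follow once one proves
\[
\bigl\langle S_\infty^{(s)},\bigl[(F^s)^*(\pi_\mathbb{G})^*\omega_\mathbb{G}^{p-l}-(\pi_\mathbb{G})^*\omega_\mathbb{G}^{p-l}\bigr]\wedge(\pi_D)^*(f^s)^*\theta\bigr\rangle=0,
\]
since the right-hand side then matches $\langle(f^s)_*\underline{S_\infty^{(s)}},\theta\rangle$.

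The main obstacle is this vanishing, which relies on a decomposition exploiting the $D$-dimension of $S_\infty^{(s)}$ and the closedness statement of Lemma \ref{l:c1}(ii). The key observation is that fiberwise over $z\in D$, $(F^s)^*(\pi_\mathbb{G})^*\omega_\mathbb{G}$ restricts on $\{z\}\times\mathbb{G}(p,k)$ to $(\gamma_z^s)^*\omega_\mathbb{G}$, where $\gamma_z^s$ is the automorphism of $\mathbb{G}(p,k)$ induced by $Df^s(z)\in GL(k,\mathbb{C})$. Since $H^{1,1}(\mathbb{G}(p,k))$ is generated by $[\omega_\mathbb{G}]$ and $GL(k,\mathbb{C})$ is connected, the fiberwise difference $(\gamma_z^s)^*\omega_\mathbb{G}-\omega_\mathbb{G}$ is $dd^c$-exact, with potentials $\phi_z$ depending smoothly on $z$. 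Setting $\phi(z,H):=\phi_z(H)$ and $\beta'':=(F^s)^*(\pi_\mathbb{G})^*\omega_\mathbb{G}-(\pi_\mathbb{G})^*\omega_\mathbb{G}-dd^c\phi$, every term of $\beta''$ contains at least one factor in the $D$-direction (zero fiber-restriction). Expanding the $(p-l)$-th power and using closedness of $dd^c\phi$ and of $(\pi_\mathbb{G})^*\omega_\mathbb{G}$,
\[
(F^s)^*(\pi_\mathbb{G})^*\omega_\mathbb{G}^{p-l}-(\pi_\mathbb{G})^*\omega_\mathbb{G}^{p-l}=dd^c\Psi+\Xi,
\]
with $\Psi$ smooth and $\Xi$ a sum of terms each containing $\beta''$. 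The $\Xi$-contribution vanishes against $S_\infty^{(s)}\wedge(\pi_D)^*(f^s)^*\theta$: each term has total $D$-degree at least $2l+1$, and locally writing the integrand as a sum of products $c(z,H)\,(\pi_D)^*\gamma\wedge(\pi_\mathbb{G})^*\delta$ reduces its vanishing to Lemma \ref{l:CS}. For the $dd^c\Psi$-contribution, Lemma \ref{l:c1}(ii) gives closedness of $S_\infty^{(s)}$ on $M''\times N\times\mathbb{G}(p,k)$, and Lemma \ref{l:CS} applied to the $(2l+1)$-form $d(f^s)^*\theta$ yields $d\bigl(S_\infty^{(s)}\wedge(\pi_D)^*(f^s)^*\theta\bigr)=0$ there; Stokes' theorem then provides the vanishing and completes (iii).
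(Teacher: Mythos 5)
Your proposal is correct and follows essentially the same route as the paper: (i) and (ii) are read off from Proposition \ref{p:shadow} together with Lemma \ref{l:c1}, and (iii) is obtained from Lemma \ref{l:c1}(i) by writing the transported Grassmannian form as the original form plus a $dd^c$-exact term plus a term degenerating on the fibres of $\pi_D$, killing the former by Stokes together with the quasi-closedness of $S_\infty^{(s)}$ (the paper's Lemma \ref{l:ddcSs=0}) and the latter via the $D$-dimension and Lemma \ref{l:CS}. The only differences are cosmetic: you work with $(F^s)^*$ directly on $\mathbb{G}(p,k)$ (using $b_2(\mathbb{G}(p,k))=1$ and the connectedness of $\mathrm{GL}(k,\mathbb{C})$) where the paper pushes forward through the Pl\"ucker embedding into $\mathbb{P}^{L(p,k)}$, and you justify $d\bigl(S_\infty^{(s)}\wedge(\pi_D)^*(f^s)^*\theta\bigr)=0$ by locating $\mathrm{supp}((f^s)^*\theta)$ inside the region $M''\times N$ where $S_\infty^{(s)}$ is closed, rather than through the paper's cut-off identity.
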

\begin{proof}
Assertion (i)
follows from Proposition \ref{p:shadow} (applied with $D'$ instead of $\tilde D$)
and Lemma \ref{l:c1} 
(iv).
Assertion
(ii)
follows from 
Lemma \ref{l:c1}
(ii).
  Hence, we only have to prove
    the assertion 
   (iii).
  Fix $s\geq 0$ and let $\theta $ be a positive $(l,l)$-form 
   compactly supported 
   in $D$. We are going to show that
\begin{equation}\label{e:it:ineq}
 \langle \underline{S}^{(0)}_\infty,\theta \rangle
\leq 
\langle (f^s)_*(\underline{S}_\infty^{(s)}),\theta \rangle.
\end{equation}
   
\medskip

Recall that
$F$ can also be seen as a map from
$D_{v,1} \times \mathbb{P}^{L(p, k)}$
to
$D_{h,1} \times \mathbb{P}^{L(p, k)}$, and that
$\omega_{\G}$ is the form induced on $\G(p,k)$
by the 
Fubini-Study form on $\mathbb P^L= \mathbb P^{L(p,k)}$.
For every 
$z_0\in D_{v,1}$ we can identify the second coordinate of $F^s(z_0,\cdot)$ as an automorphism
 of $\P^{L}$.
By the invariance of the class of $\omega_{\P^{L}}$ 
 under the actions
 of such automorphisms,
  we have   
 \[(F^s)_*( (\pi_{\P^L})^*
 { \omega_{\P^L}})=(\pi_{\P^L})^*{\omega_{\P^L}}
 + dd^c {u_s}+ v_s,\]
for some   smooth $2$-form {$v_s$} which vanishes on the fibres 
of $\pi_{D}$ and
smooth function $u_s$,
on the common
 domain of definition of $(F^s)_*( (\pi_{\P^L})^*\omega_{\P^L})$ and 
 $(\pi_{\P^L})^*\omega_{\P^L}$. 
  It follows that, on the same set, we have 
  \begin{equation}\label{eq:Fomega}
  (F^s)_*( (\pi_{\P^L})^*
\omega_{\P^L}^{p-l})=(\pi_{\P^L})^* \omega_{\P^L}^{p-l}
 + dd^c U_s + V_s,
 \end{equation}
for some form $U_s$ and 
a  form $V_s$
which vanishes on the fibres 
of  $\pi_{D}$.
Hence, $V_s$ can be written as a linear combination of forms of the form  $(\pi_{D})^*\alpha \wedge V'_s$, where $\alpha$ is a $1$-form on $D$.

By using
  \eqref{eq:defshadow},
Lemma \ref{l:c1} 
(i),
 and
 \eqref{eq:Fomega},
  we have
   \begin{align*}
       \langle \underline{S}^{(0)}_\infty,\theta \rangle 
       &      =\int\limits_{D\times\G(p,k)}S_\infty^{(0)}\wedge (\pi_{\mathbb{G}})^*\omega_{\G}^{p-l}\wedge (\pi_D)^*\theta \\
       &
       \le
       d^{-s}\int\limits_{D\times\P^L}
        (F^s)_*(S_\infty^{(s)})
       \wedge (\pi_{\mathbb{P}^L})^*\omega_{\P^L}^{p-l}\wedge (\pi_D)^*\theta\\
   & 
       =
       d^{-s}\int\limits_{D\times\P^L}
              (F^s)_*(S_\infty^{(s)})
       \wedge
       (F^s)_* 
       ((\pi_{\mathbb{P}^L})^*\omega_{\P^L}^{p-l})
       \wedge (\pi_D)^*\theta \\
        & \quad -
        d^{-s}\int\limits_{D\times\P^L}
              (F^s)_*(S_\infty^{(s)})
       \wedge
      dd^c U_s
             \wedge (\pi_D)^*\theta\\
       & \quad -
        d^{-s}\int\limits_{D\times\P^L}
             (F^s)_*(S_\infty^{(s)})
       \wedge
             {V_s}
       \wedge (\pi_D)^*\theta.
\end{align*}
 
The second integral in the last term vanishes
by Stokes' formula and
Lemma \ref{l:ddcSs=0}. We claim that
the third integral also vanishes.
Indeed, by the above description of $V_s$, it
is enough to show this claim for $V_s$ of the form
$(\pi_D)^*\alpha \wedge V'_s$, where $\alpha$ 
is a $1$-form on $D$.
This implies that
$(\pi_{D})^*(\alpha\wedge\theta)$ is  $(2l+1)$-form and thus Lemma \ref{l:CS}  implies that $$(F^s)_*(S_\infty^{(s)}) \wedge (\pi_{D})^*(\alpha\wedge\theta)=(F^s)_*(S_\infty^{(s)} \wedge (\pi_{D})^*((f^s)^*(\alpha\wedge\theta)))=0.$$
As $F^s$ is invertible, this implies
\begin{align*}
          \langle \underline{S}^{(0)}_\infty,\theta \rangle 
          &
          \leq 
          d^{-s}\int\limits_{D\times\P^L}
                   (F^s)_*\left(S_\infty^{(s)}\wedge (\pi_{\mathbb{P}^L})^*\omega_{\P^L}^{p-l}\right)\wedge (\pi_D)^*\theta \\ 
           &=d^{-s}\int\limits_{D\times\G(p,k)} S_\infty^{(s)}\wedge (\pi_{\mathbb{G}})^*\omega_{\G}^{p-l}\wedge (F^s)^*( (\pi_D)^*\theta) \\ 
           &=d^{-s} \langle \underline{S}_\infty^{(s)},(f^s)^*(\theta) \rangle=d^{-s} \langle (f^s)_*(\underline{S}_\infty^{(s)}),\theta \rangle. 
   \end{align*}
This shows 
\eqref{e:it:ineq}
and completes the proof.
\end{proof}

We can now complete the proof of Proposition \ref{p:In=Odn}.

\begin{proof}[End of the proof of Proposition \ref{p:In=Odn}]
We continue to use the notations introduced above. 
By Lemma \ref{l:c2} 
(i) and (ii),
we have $\|\underline{S}^{(0)}_\infty\|_{M'\times N}=1$.
By
Lemma \ref{l:c2} (iii),
this gives $$d^s=d^s\|\underline{S}_\infty^{(0)}\|_{M'\times N}\le\|(f^s)_*(\underline{S}^{(s)}_\infty\|_{M'\times N}$$
for every $s\geq 0$.
Since $\underline{S}^{(s)}_\infty$ is a closed horizontal current on $M''\times N$, by
\eqref{e:hp-inclusion} we 
see that ${f_*(\underline{S}^{(s)}_\infty)}$ is a closed horizontal current on $M'\times N$. Moreover, since the mass of ${\underline{S}^{(s)}_\infty}$  is uniformly bounded from above
by a constant independent of $s$,
the mass of ${f_*(\underline{S}^{(s)}_\infty})$ is also bounded by
a constant independent of $s$.
By using the Definition \ref{d:degrees-currents}
of the dynamical degrees,
it  follows that
$$d\le \limsup_{s\to \infty}\|(f^{s})_*(f_*(\underline{S}^{(s+1)}_\infty))\|_{M'\times N}^{1/s}\le d^+_l.$$
On the other hand, since $l<p$, by \cite[Theorem 1.1]{BDR}
and the assumption $d^+_{p-1}< d$
we have $d_l^+\le d^+_{p-1}< d$. This gives the desired contradiction. So, we have $I(n)=O(d^n)$
and the proof is complete.
\end{proof}

\end{document}